\documentclass{amsart}
\usepackage{amssymb,amsmath}
\usepackage{pgf,tikz}
\usetikzlibrary{arrows}
\usepackage{hyperref}

\hypersetup{
  colorlinks  = true, 
  urlcolor    = black, 
  linkcolor   = black, 
  citecolor   = black 
}

\newcommand{\TT}{\mathbb{T}}
\newcommand{\PP}{\mathbb{P}}
\newcommand{\NN}{\mathbb{N}}
\newcommand{\I}{\mathcal{I}}
\renewcommand{\P}{\mathcal{P}}

\newcommand{\Q}{\mathcal{Q}}
\newcommand{\C}{\mathcal{C}}

\newcommand{\K}{\mathcal{K}}

\renewcommand{\t}{\mathbf{t}}

\newcommand{\ds}{\displaystyle}
\newcommand{\ts}{\textstyle}
\renewcommand{\ss}{\scriptstyle}
\newcommand{\sss}{\scriptscriptstyle}

\newcommand{\reg}{\operatorname{reg}}
\newcommand{\set}[1]{\left \{ #1 \right \}}
\newcommand{\sset}[1]{\ss\left \{ #1 \right \}\ts}

\newtheorem{theorem}{Theorem}[section]

\newtheorem{lemma}[theorem]{Lemma}
\newtheorem{cor}[theorem]{Corollary}
\newtheorem{prop}[theorem]{Proposition}
\newtheorem*{claim*}{Claim}

\theoremstyle{definition}
\newtheorem{definition}[theorem]{Definition}

\newtheorem{exam}[theorem]{Example}

\newcommand{\BDarrow}[1]{\draw[shading = axis, left color=gray, right color=gray!50!white] (0,#1) -- (.1,#1) -- (.1,#1-.2) -- 
(.2,#1-.2) -- (0,#1-.4) -- (-.2,#1-.2) -- (-.1,#1-.2) -- (-.1,#1) -- (0,#1);}

\begin{document} 
 
\title[Regularity of  vanishing ideals]%
{Regularity of the vanishing ideal \\ over a bipartite nested ear decomposition}

\author{J.~Neves}
\address{CMUC, Department of Mathematics, University of Coimbra, 3001-501 Coimbra, Portugal.}
\email{neves@mat.uc.pt}
\thanks{This work was partially supported by Centro de Matem\' atica da Universidade de Coimbra – UID/MAT/00324/2013, funded by the Portuguese Government through FCT/MEC
and co-funded by the European Regional Development Fund through the Partnership Agreement PT2020.}

\keywords{Castelnuovo--Mumford regularity, Binomial ideal, ear decomposition}
\subjclass{13F20 (primary); 14G15, 11T55, 05E40, 05C70}

\begin{abstract}
We study the Castelnuovo--Mumford regularity of the vanishing ideal over a bipartite graph 
endowed with a decomposition of its edge set. We prove that, under certain conditions, 
the regularity of the vanishing ideal 
over a bipartite graph obtained from a graph by attaching a path of length $\ell$ increases by 
$\lfloor \frac{\ell}{2}\rfloor (q-2)$, where $q$ is the order of the field of coefficients. We use this
result to show that the regularity of the vanishing ideal over a bipartite graph, $G$, endowed with a weak nested ear decomposition is equal 
to $$\ts \frac{|V_G|+ \epsilon -3}{2}(q-2),$$ where $\epsilon$ is the number of even length ears and pendant edges of the decomposition. 
As a corollary, we show that for bipartite graph, the number of even length ears 
in a nested ear decomposition starting from a vertex is constant.
\end{abstract}
\maketitle

\section{Introduction}
\label{sec: intro}
Given $G$, a simple graph, and $K$, a finite field,
$K[E_G]$ denotes the polynomial ring with coefficients in $K$, the variables 
of which are in one-to-one correspondence with the 
edge set of the graph. The vanishing ideal over $G$ is a binomial ideal of $K[E_G]$, denoted here by $I_q(G)$, 
given as the vanishing ideal of the projective toric subset parameterized by $E_G$. They were defined by Renteria, 
Simis and Villarreal in \cite{ReSiVi11}, with a view towards applications to the 
theory of linear codes and hence the presence of a finite field.  
The aim of this work is to continue the study 
of the Castelnuovo--Mumford regularity of these ideals. 
Originally this invariant is related to the error-correcting performance of the linear codes
involved, however, here, we wish to regard it strictly from the point of view 
of the link that these ideals provide between commutative algebra and graph theory. 
\smallskip

This idea has been used for many classes of ideals and the existing 
results point to interesting graph invariants. For instance, the Castelnuovo--Mumford
regularity of the \emph{edge ideal} of graph is bounded below by the induced matching number and above by 
the co-chordal cover number (cf.~\cite{HaVT08}, \cite[Lemma 2.2]{Ka06} and \cite{Wo14}). 
There are also partial results for the \emph{toric ideal} of a graph (cf.~\cite{monalg,BiO'KVT}) 
and for the \emph{binomial edge ideal} of a graph (cf.~\cite{EnZa15,KiSM16,MaMu13}).
\smallskip

The fact that, for the vanishing ideal over a graph, the quotient of $K[E_G]$ by $I_q(G)$
is a Cohen--Macaulay graded ring of dimension one explains why we know relatively more about this 
invariant in this case than in the cases of edge, toric or binomial edge ideals. 
The Castelnuovo--Mumford regularity of the vanishing ideal over a graph has been computed for many classes
of graphs, including trees, cycles (cf.~\cite{NeVPVi15}), complete graphs (cf.~\cite{GoReSa13}), 
complete bipartite graphs (cf.~\cite{GoRe08}), complete multipartite graphs (cf.~\cite{NeVP14}) 
and, more recently, parallel compositions of paths (cf.~\cite{MaNeVPVi}). Additionally we know that, in the bipartite case,
the regularities of the vanishing ideals over the members of the block decomposition of a graph completely determine the regularity 
of the vanishing ideal over the graph (see Proposition~\ref{prop: block_dec}, below).

In this work we establish a formula for the Castelnuovo--Mumford regularity 
of the vanishing ideals over graphs in the 
class of bipartite graphs endowed with certain decomposition of its edge set into paths. 
The simplest case of a such a decomposition, a so-called \emph{ear decomposition}, is a partition of the edge set of $G$ into subgraphs
$\P_0,\P_1,\dots,\P_r$ such that $\P_0$ is a vertex and, for all $1\leq i\leq r$, the path has its end-vertices in 
$\P_0\cup \cdots \cup \P_{i-1}$ and \emph{none} of its inner vertices in this union. Ear decompositions play a central role 
in graph connectivity as, by Whitney's theorem, a graph is $2$-vertex-connected if and only if it is endowed with an \emph{open} ear decomposition (one
in which every $\P_i$ with $i>1$ has distinct end-vertices). In \cite{Ep92}, Eppstein introduces the notion of \emph{nested}
ear decomposition, a special case of ear decomposition in which, firstly, the paths $\P_i$ are forced to have end-vertices in a (same)
$P_j$, for some $j<i$, and, secondly, a \emph{nesting} condition is to be satisfied for two paths having their end-vertices in a same $\P_j$ 
(see Definition~\ref{def: nested decompositions of a graph}). 
\smallskip

By Theorem~\ref{thm: regularity of a nested ear decomposition}, below, it follows that
the Castelnuovo--Mumford of the quotient of $K[E_G]$ by $I_q(G)$, when $G$ is endowed 
with a nested ear decomposition, is given by:
\begin{equation}\label{eq: the regularity for weak nested}
\ts \frac{|V_G| + \epsilon-3}{2}(q-2)
\end{equation}
where $\epsilon$ is the number of paths of even length in the decomposition and $q$ is the order of the field $K$.
As a corollary, we deduce that the number of even length paths in any nested ear decomposition  
of a bipartite graph, that starts from a vertex, is constant (cf.~Corollary~\ref{cor: number of even length paths is constant}).
\smallskip

This article is organized as follows. In the next section we set up the notation used throughout 
and define the vanishing ideal over a graph. We recall several characterizations of this ideal which 
allow a direct definition without mentioning the projective toric subset parameterized
by the edges of the graph. We also recall the Artinian reduction technique, which is the main 
tool in the computation of the regularity (cf.~Proposition~\ref{prop: computing reg by reducing to Artinian quotient}).
After reviewing some known values of the regularity (cf.~Table~\ref{table: values of regularity}) we 
go through the existing results bounding the regularity in terms of combinatorial data on the graph, among
these, the bound from the independence number of the graph (cf.~Proposition~\ref{prop: independent sets and regularity}).
Other results reviewed in this section include the upper bounds obtained from a spanning subgraph and
from an edge cover and two other results, one relating the regularity with the block decomposition and another relating
it with the leaves of the graph. Sections~\ref{sec: ears and regularity} and \ref{sec: nested ear decomposition}
contain the main results of this work. In Section~\ref{sec: ears and regularity}, we investigate the contribution
to the regularity of the vanishing ideal over a graph obtained from another graph by attaching to it a path by its end-vertices. 
Theorem~\ref{thm: adding a path to the endpoints of path with degree 2 inner vertices} states that, 
under some conditions, the regularity increases by 
$\lfloor \frac{\ell}{2}\rfloor (q-2)$,
where $\ell$ is the length of the path attached and $q$ the cardinality of $K$.
In Section~\ref{sec: nested ear decomposition}, we use the previous result to establish the 
regularity of a bipartite graph endowed with a \emph{weak} nested ear decomposition. 
This notion is a slight generalization of the notion of nested ear decomposition and 
arises naturally in the context of the proof of Theorem~\ref{thm: regularity of a nested ear decomposition}. Its
distinctive feature is that one allows the existence of pendant edges in the decomposition. Theorem~\ref{thm: regularity of a nested ear decomposition}
expresses the regu\-la\-ri\-ty of a bipartite graph endowed with a weak nested ear decomposition by the formula \eqref{eq: the regularity for weak nested}, 
where, now, $\epsilon$ is the number of even length paths and pendant edges. Corollary~\ref{cor: number of even length paths is constant},
stating that the number of even length paths in a nested ear decomposition of a graph is constant,
is then a direct consequence of this formula. As an application of Theorem~\ref{thm: regularity of a nested ear decomposition}
we finish by producing a family of graphs with regularities arbitrarily larger than the lower bound given by their
independence numbers.

\section{Preliminaries}
\label{sec: prelim}
The graphs considered in this work are assumed to be simple graphs 
(finite, undirected, loopless and without multiple edges). Additionally,
we will assume throughout that no isolated vertices occur. 
To simplify the notation, we assume that the vertex set, $V_G$, is a subset of $\NN$.

\subsection{The vanishing ideal over a graph}
We will denote by $K$ a finite field of order $q>2$.   
Given a graph $G$, we consider a polynomial ring with coefficients in $K$ 
the variables of which are in bijection with the edges of $G$ and denote it by $K[E_G]$. 
A variable in $K[E_G]$ corresponding to and edge $\set{i,j}\in E_G$ will be denoted by $t_{ij}$, which 
is the abbreviated form of $t_{\set{i,j}}$.
Given an non-negative integer valued function on the edge set, $\alpha \in \NN^{E_G}$,
the monomial $\t^\alpha\in K[E_G]$ is, by definition, 
$$
 \t^\alpha = \prod_{\sss \set{i,j}\in {E_G}}\hspace{-.3cm} t_{ij}^{\alpha\set{i,j}}.
$$
We say that $\t^\alpha$ is supported on the edges of a subgraph $H\subset G$ if 
$$\ts \alpha\set{i,j} \not = 0 \iff \set{i,j}\in E_H.$$

\noindent
Consider $\PP^{|E_G|-1}$, 
the projective space over $K$ with coordinate ring $K[E_G]$ and let $\PP^{|V_G|-1}$, be the projective space 
with coordinate ring $K[x_i : i\in V_G]$. The ring homomorphism $\varphi\colon K[E_G]\to K[x_i : i \in V_G]$
given by: 
\begin{equation}\label{eq: L231}
t_{ij}\mapsto x_ix_j
\end{equation}
defines a rational map 
$\varphi^\sharp\colon  \PP^{|V_G|-1} \to \PP^{|E_G|-1}$,  the restriction of which to the projective torus, 
$\TT^{|V_G|-1}$, the subset of projective space of points with every coordinate a nonzero scalar, 
is a regular map. 

\begin{definition}\label{def: the ideal}
The projective toric subset parameterized by $G$ is the subset of $\PP^{|E_G|-1}$ defined by:
$$
\ts X = \varphi^\sharp (\TT^{|V_G|-1}) \subset \PP^{|E_G|-1}.
$$ 
The vanishing ideal of $X$ is denoted by 
$I_q(G)\subset K[E_G]$.
\end{definition}

We note that $I_q(G)$ can be defined directly from $G$, without reference to $X$, as the ideal generated by the homogeneous
polynomials $f\in K[E_G]$ which vanish after substitution of each variable $t_{ij}$ by $a_ia_j$, for all $a_i\in K^*$, with $i\in V_G$. 
For this reason we refer to $I_q(G)$ simply as the \emph{vanishing ideal over $G$}.
\smallskip

The ideal $I_q(G)$ was defined in \cite{ReSiVi11}. 
Being a vanishing
ideal,  it is automatically a radical, graded ideal. We also know that $I_q(G)$ 
has a binomial generating set. 
The fact that $I_q(G)$ contains the vanishing ideal of the torus over the finite field $K$, which is given by
\begin{equation}\label{eq: 140}
\ts I_q = \bigl(t_{ij}^{q-1} - t_{kl}^{q-1} : \;\;\ss\set{i,j},\set{k,l} \ts \in E_G\bigr),
\end{equation}
implies 
that the height of $I_q(G)$ is $|E_G|-1$ and hence the 
quotient $K[E_G]/I_q(G)$ is a one-dimensional graded ring. 
Additionally, since any monomial in $K[E_G]$ is a regular element in this quotient (since no variable vanishes 
on the torus), we deduce that $K[E_G]/I_q(G)$ is Cohen--Macaulay. We refer the reader to \cite[Theorem~2.1]{ReSiVi11} for complete proofs of these statements.
\smallskip 

The ideal $I_q(G)$ can be related to the toric ideal of $G$, i.e., the ideal $P_G\subset K[E_G]$ given by  
$P_G = \ker \varphi$, where $\varphi \colon K[E_G]\to K[x_i : i\in V_G]$ is the map defined by \eqref{eq: L231}. 
It can be shown (see \cite[Theorem~2.5]{ReSiVi11}) that 
\begin{equation}\label{eq: L195}
\ts I_q(G) = (P_G+I_q):(\t^* )^\infty,
\end{equation}
where $I_q$ is the vanishing ideal of the torus, given in \eqref{eq: 140}, and by $\t^*$ we denote the product of all variables
of the polynomial ring $K[E_G]$,
$$
\ts\t^* = \hspace{-.2cm}\underset{\sss\set{i,j}\in E_G}{\prod}  \hspace{-.15cm} t_{ij}.
$$
The relation with the toric ideal \eqref{eq: L195} reinforces the idea, already expressed above, that $I_q(G)$ can be defined without 
any reference to the projective toric subset $X$ parameterized by $E_G$. Yet another way to do this is by a characterization of the 
set homogeneous binomials of $I_q(G)$, achieved by the 
following proposition. The proof of this result can be found in \cite[Lemma~2.3]{NeVP14}.

\begin{prop}\label{prop: congruences}
Let $\t^\nu - \t^\mu \in K[E_G]$ be a homogeneous  binomial. Then 
$\t^\nu -\t^\mu$ belongs to  $I_q(G)$ if and only if, for all 
$i\in V_G$, 
\begin{equation}\label{eq: 181}
\ds\sum_{k\in N_G(i)} \nu \sset{i,k} \ds \equiv \sum_{k\in N_G(i)} \mu \sset{i,k} \pmod{q-1},
\end{equation}
where $N_G(\cdot)$ denotes the set of neighbors of a vertex.
\end{prop}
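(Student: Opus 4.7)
The plan is to exploit the alternative description of $I_q(G)$ already mentioned after Definition~\ref{def: the ideal}: a homogeneous polynomial $f \in K[E_G]$ lies in $I_q(G)$ if and only if the substitution $t_{ij} \mapsto a_i a_j$ makes $f$ vanish for every choice of $a_i \in K^*$, $i \in V_G$. Applying this to $f = \t^\nu - \t^\mu$, the problem reduces to analyzing a single polynomial identity in the $a_i$'s and extracting congruence information from it using the fact that $K^*$ is cyclic of order $q-1$.

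First I would perform the substitution explicitly. Since each variable $t_{ij}$ contributes the factor $a_i a_j$, collecting the powers of each $a_i$ gives
\[
\t^\nu\bigl|_{t_{ij} = a_i a_j} \;=\; \prod_{i \in V_G} a_i^{\sigma_i(\nu)}, \qquad \sigma_i(\nu) := \sum_{k \in N_G(i)} \nu\sset{i,k},
\]
and analogously for $\mu$. Thus $\t^\nu - \t^\mu \in I_q(G)$ if and only if, for every tuple $(a_i)_{i \in V_G} \in (K^*)^{V_G}$,
\[
\prod_{i \in V_G} a_i^{\sigma_i(\nu)} \;=\; \prod_{i \in V_G} a_i^{\sigma_i(\mu)}.
\]

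For the \emph{if} direction: if $\sigma_i(\nu) \equiv \sigma_i(\mu) \pmod{q-1}$ for each $i$, then since $a^{q-1} = 1$ for every $a \in K^*$, the corresponding factors are equal pointwise and the displayed identity holds for every choice of $a_i$. For the \emph{only if} direction: fix $i_0 \in V_G$ and a generator $g$ of the cyclic group $K^*$; specializing $a_{i_0} = g$ and $a_i = 1$ for $i \neq i_0$ forces $g^{\sigma_{i_0}(\nu)} = g^{\sigma_{i_0}(\mu)}$, which is precisely the congruence \eqref{eq: 181} at the vertex $i_0$. Since $i_0$ was arbitrary, all the congruences follow.

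There is no real obstacle here, just a couple of points to be careful about. One should verify that the projective nature of $X$ does not cause trouble: the equation is homogeneous of bidegree $(\deg \t^\nu, \deg \t^\nu)$ in the $a_i$'s, so the identity is invariant under the scaling $a_i \mapsto \lambda a_i$ and one may work with arbitrary (non-normalized) tuples in $(K^*)^{V_G}$. The only other delicate point is to justify the initial reduction from membership in $I_q(G)$ to the substitution condition, which is exactly the equivalent formulation of $I_q(G)$ established in \cite{ReSiVi11} and recalled in the paragraph following Definition~\ref{def: the ideal}. Everything else is the standard use of the fact that $K^*$ is cyclic of order $q-1$.
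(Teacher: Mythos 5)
Your proof is correct. The paper itself does not prove this proposition---it cites \cite{NeVP14} for the argument---and your reasoning (reduce ideal membership of a homogeneous binomial to pointwise vanishing on the torus image, collect the exponent of each $a_i$ as the vertex-sum $\sum_{k\in N_G(i)}\nu\sset{i,k}$, then use that $K^*$ is cyclic of order $q-1$, specializing all but one coordinate to $1$ and the remaining one to a generator for the converse) is exactly the standard argument behind the cited lemma. The two points you flag as delicate, namely the invariance under projective rescaling (which is where homogeneity is genuinely used) and the passage from ``generated by vanishing homogeneous polynomials'' to ``a homogeneous polynomial is in the ideal iff it vanishes,'' are handled correctly.
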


With this characterization of $I_q(G)$ by means of a generating set of homogeneous binomials satisfying \eqref{eq: 181}, 
the following relation between the ideal $I_q(G)$ and the vanishing ideal over a subgraph of $G$ is easy to prove.

\begin{cor}\label{cor: inclusion of vanishing ideals}
Let $H$ be a subgraph of $G$. Then, under the inclusion of polynomial rings \mbox{$K[E_H]\subset K[E_G]$,}  
we have $I_q(H) =  I_q(G)\cap K[E_H]$.
\end{cor}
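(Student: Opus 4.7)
\medskip

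\textbf{Plan.} The idea is to prove the two inclusions separately, using the congruence characterization (Proposition~\ref{prop: congruences}) for the easy one, and the geometric description via the projective toric subset for the harder one.

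For the inclusion $I_q(H)\subseteq I_q(G)\cap K[E_H]$, I would argue on a binomial generating set. By Proposition~\ref{prop: congruences}, $I_q(H)$ is generated by homogeneous binomials $\t^\nu - \t^\mu \in K[E_H]$ satisfying the congruences \eqref{eq: 181} at every $i\in V_H$ with respect to $N_H$. Such a binomial is already in $K[E_H]$, and I need only check that it satisfies \eqref{eq: 181} at every $i\in V_G$ with respect to $N_G$. For a vertex $i\in V_G\setminus V_H$, both sums in \eqref{eq: 181} are empty because $\nu$ and $\mu$ are supported on $E_H$; for a vertex $i\in V_H$, the neighbors in $N_G(i)\setminus N_H(i)$ contribute zero to each sum for the same reason, so the $G$-congruence reduces to the $H$-congruence, which holds by assumption. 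Hence every generator lies in $I_q(G)$, and the inclusion follows.

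For the reverse inclusion $I_q(G)\cap K[E_H]\subseteq I_q(H)$, I would use Definition~\ref{def: the ideal} directly. Let $X_G=\varphi_G^\sharp(\TT^{|V_G|-1})$ and $X_H=\varphi_H^\sharp(\TT^{|V_H|-1})$ be the associated projective toric subsets. The key observation is that the coordinate projection $\pi\colon \PP^{|E_G|-1}\dashrightarrow \PP^{|E_H|-1}$ that forgets the coordinates labeled by $E_G\setminus E_H$ is defined on all of $X_G$, because points of $X_G$ have every coordinate nonzero, and sends $X_G$ onto $X_H$. Surjectivity comes from the fact that any $[x_i\colon i\in V_H]\in \TT^{|V_H|-1}$ extends to $[\tilde x_i\colon i\in V_G]\in \TT^{|V_G|-1}$ by assigning arbitrary nonzero scalars to the vertices in $V_G\setminus V_H$. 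Now if $f\in I_q(G)\cap K[E_H]$, then $f$ is a homogeneous polynomial in the $E_H$-variables that vanishes on $X_G$; since evaluating such an $f$ at a point of $X_G$ depends only on the $E_H$-coordinates, $f$ also vanishes at every point of $\pi(X_G)=X_H$, whence $f\in I_q(H)$.

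The main obstacle is the second inclusion, since an element of $I_q(G)\cap K[E_H]$ is not a priori a combination of $E_H$-supported binomial generators of $I_q(G)$. The geometric argument above bypasses this by working at the level of zero-loci instead of generators; alternatively, one could establish, via Proposition~\ref{prop: congruences}, that for monomials supported on $E_H$ the equivalence relations $\t^\nu-\t^\mu\in I_q(G)$ and $\t^\nu-\t^\mu\in I_q(H)$ coincide, and then invoke the binomial ideal structure to decompose $f$ accordingly. Either route yields the desired equality $I_q(H)=I_q(G)\cap K[E_H]$.
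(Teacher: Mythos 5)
Your argument is correct, but it is worth noting that the paper offers no written proof at all: it simply asserts, immediately after Proposition~\ref{prop: congruences}, that the corollary ``is easy to prove'' from the congruence characterization. The intended argument is therefore presumably the purely combinatorial one: a homogeneous binomial supported on $E_H$ satisfies the congruences \eqref{eq: 181} with respect to $G$ if and only if it satisfies them with respect to $H$ (the extra neighbors and the vertices outside $V_H$ contribute nothing), and since both $I_q(H)$ and $I_q(G)\cap K[E_H]$ are generated by such binomials, the two ideals coincide. Your first inclusion is exactly this computation. For the reverse inclusion you instead go geometric, via the coordinate projection $\pi$ restricted to $X_G$; this is a genuinely different route, and it is complete as written: $\pi$ is defined on $X_G$ because all coordinates are nonzero there, surjectivity onto $X_H$ follows by extending a point of $\TT^{|V_H|-1}$ arbitrarily to $\TT^{|V_G|-1}$, and a homogeneous $f\in K[E_H]$ vanishing on $X_G$ then vanishes on $X_H$. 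What the geometric route buys you is that you avoid having to justify that $I_q(G)\cap K[E_H]$ is itself generated by binomials supported on $E_H$ --- the step you gesture at in your ``alternative route,'' which is the only genuinely nontrivial point of the combinatorial proof (it requires the pure-difference binomial structure of $I_q(G)$, or an elimination-preserves-binomiality argument, to decompose an arbitrary $f$ into monomial differences within $K[E_H]$). Two minor points of hygiene: an element of $I_q(G)\cap K[E_H]$ need not be homogeneous, so you should reduce to homogeneous components (harmless, since both ideals are graded); and your first inclusion implicitly uses that $I_q(H)$ admits a generating set of \emph{homogeneous} binomials, which holds because no monomial lies in a vanishing ideal over the torus, but deserves a word.
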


Despite the multiple characterizations of $I_q(G)$, a complete classification of the subgraphs of $G$ that support binomials of a 
minimal binomial generating set of $I_q(G)$ is still lacking, for general $G$; in contrast with the case of the toric ideal
$P_G$ in which the binomials in a minimal generating set are in one-to-one correspondence with the closed even walks 
on the graph.

\subsection{Castelnuovo--Mumford regularity}
Recall that if $S$ is a polynomial ring and $M$ is any graded $S$-module, the Castelnuovo--Mumford regularity
of $M$ is, by definition, 
$$
\reg M = \max_{i,j} \set{j-i : \beta_{ij} \not = 0},
$$
where $\beta_{ij}$ are the graded Betti numbers of $M$. The Castelnuovo--Mumford regularity of $K[E_G]/I_q(G)$ is 
thus an integer we can associate to any simple graph without isolated vertices. 

\begin{definition}
Let $G$ be a simple graph without isolated vertices and $K$ a finite field. 
We define the Castel\-nuo\-vo--Mumford regularity of $G$ over the field $K$
to be the regularity of the quotient $K[E_G]/I_q(G)$ and we denote it by $\reg G$.
\end{definition}

Since $K[E_G]/I_q(G)$ is a Cohen--Macaulay one-dimensional
graded ring, its re\-gu\-larity coincides with its 
\emph{index of regularity}, i.e., the least integer from which the value of the Hilbert function equals the value of the 
Hilbert Polynomial (cf.~\cite[Proposition 4.2.3]{monalg}). 
Additionally, given that any monomial $\t^\delta \in K[E_G]$ is a re\-gu\-lar element of 
$K[E_G]/I_q(G)$ and the quotient of $K[E_G]$ by the extended ideal, $(I_q(G),\t^\delta)$, is
a zero-dimensional graded ring with index of regularity equal to 
$\reg G + \deg \t^\delta$, we get:
$$
\reg G = \min\set{i :  \dim_K \bigl ( K[E_G]/(I_q(G),\t^\delta) \bigr)_i = 0} - \deg \t^\delta.
$$
\smallskip

The idea of taking the Artinian quotient $K[E_G]/(I_q(G),\t^\delta)$ to compute $\reg G$ is the main ingredient
in the proof of the next proposition, which will be used several times in this article. 
See \cite[Propositions~2.2 and 2.3]{MaNeVPVi} for a proof.

\begin{prop}\label{prop: computing reg by reducing to Artinian quotient}
Let $G$ be a graph, $\t^{\delta}\in K[E_G]$ a monomial and $d$ a positive integer. Then, 
$$
\ts \reg G \leq d - \deg(\t^\delta)
$$ 
if and only if  for every monomial 
$\t^{\nu}$ of degree $d$ there exists $\t^\mu$, of degree $d$, such that $\t^\delta\mid \t^\mu$ and $\t^\nu-\t^\mu \in I_q(G)$.
\end{prop}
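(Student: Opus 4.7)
The plan is to rewrite both sides of the asserted equivalence as a single condition on the Artinian quotient $A = K[E_G]/(I_q(G),\t^{\delta})$. The formula displayed just above the statement,
$$\reg G = \min\set{i : \dim_K A_i = 0} - \deg \t^\delta,$$
together with the fact that $A$ is standard-graded (so $A_d = 0$ forces $A_i = 0$ for all $i \geq d$), makes $\reg G \leq d - \deg \t^\delta$ equivalent to $A_d = 0$; equivalently, every degree-$d$ monomial of $K[E_G]$ lies in $(I_q(G),\t^\delta)$.

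The ($\Leftarrow$) direction is then immediate: if each degree-$d$ monomial $\t^\nu$ is congruent mod $I_q(G)$ to a monomial $\t^\mu$ divisible by $\t^\delta$, then $\t^\nu \equiv \t^\mu \equiv 0 \pmod{(I_q(G),\t^\delta)}$. For ($\Rightarrow$), fix a degree-$d$ monomial $\t^\nu$ and, by passing to degree-$d$ homogeneous parts, decompose $\t^\nu = f + \t^\delta h$ with $f \in I_q(G)_d$ and $h = \sum_\lambda c_\lambda \t^\lambda \in K[E_G]_{d-\deg\t^\delta}$. In the quotient $R = K[E_G]/I_q(G)$ this becomes $[\t^\nu] = \sum_\lambda c_\lambda [\t^\delta\t^\lambda]$. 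The goal is to extract a single $\lambda$ with $\t^\nu - \t^\delta\t^\lambda \in I_q(G)$; the product $\t^\mu := \t^\delta\t^\lambda$ is then the required monomial.

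The decisive step, and the main obstacle, is a basis-type argument in $R_d$. By Proposition~\ref{prop: congruences}, the pure binomials $\t^\alpha - \t^\beta$ lying in $I_q(G)$ are exactly those whose exponents satisfy the congruences \eqref{eq: 181} at every vertex; this defines an equivalence relation on degree-$d$ monomials. Because $I_q(G)$ is a binomial ideal generated by such pure differences of monomials belonging to the same class, the monomials of a single equivalence class all have the same image in $R_d$, while images coming from different classes are $K$-linearly independent. Substituting this basis into $[\t^\nu] = \sum_\lambda c_\lambda [\t^\delta\t^\lambda]$ and equating the coefficient of the class of $\t^\nu$ forces at least one index $\lambda$ for which $\t^\delta\t^\lambda$ lies in the same class as $\t^\nu$, yielding the desired $\t^\mu$. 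The technical hinge is therefore the lattice-ideal behavior of $I_q(G)$: one must rule out linear relations in $R_d$ that mix monomials from distinct equivalence classes, which is what allows the sum $\sum c_\lambda [\t^\delta \t^\lambda]$ to be collapsed to a single class.
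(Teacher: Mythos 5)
Your argument is correct and is essentially the standard proof that the paper defers to (it cites \cite{MaNeVPVi} rather than proving the statement itself): reduce to the vanishing of the degree-$d$ piece of the Artinian quotient, then use that $I_q(G)$ is generated by pure differences of monomials lying in the same congruence class of Proposition~\ref{prop: congruences}, hence is $K$-spanned by such differences, so that distinct classes have linearly independent images and the sum $\sum_\lambda c_\lambda[\t^\delta\t^\lambda]$ collapses onto the class of $\t^\nu$. The one detail you leave implicit --- that any $K$-linear combination of monomials belonging to $I_q(G)$ must have coefficient sum zero on each equivalence class, which is what delivers the linear independence --- is routine given the pure-difference generation coming from \eqref{eq: L195}.
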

 
\subsection{Graph invariants and the regularity} 
Table~\ref{table: values of regularity} contains the values of the Castelnuovo--Mumford regularity of several families of graphs. 
The simplest cases are those of a tree and an odd cycle. 
\begin{table}[ht]\renewcommand{\arraystretch}{2}
\begin{tabular}{ll}
Graph & $\reg G$ \\ \hline 
Tree with $s$ edges & $(|V_G|-2)(q-2)$\\ \hline 
Odd cycle & $(|V_G|-1)(q-2)$\\ \hline 
Even cycle  & $\frac{|V_G|-2}{2}(q-2)$\\ \hline 
Complete graph $\K_n$, $n\geq 4$  & $\lceil (n-1)(q-2)/2 \rceil$\\ \hline 
Complete bipartite graph $\K_{a,b}$ & $(\max\set{a,b} -1 )(q-2)$ \\ \hline
\\
\end{tabular}
\caption{}
\label{table: values of regularity}
\end{table}
These are the simplest cases because, in both, $X$, 
the projective toric subset parameterized by $E_G$, coincides with the torus (cf.~\cite[Corollary~3.8]{ReSiVi11}) and therefore 
the ideal $I_q(G)$ is equal to the vanishing ideal of the torus, $I_q$, given in (\ref{eq: 140}). The fact
that $I_q$ is a complete intersection enables the straightforward computation of the regularity.
In \cite{NeVPVi15} the case of an even cycle was dealt with. The cases of the complete graph and of the complete bipartite graph were studied in 
\cite{GoReSa13} and \cite{GoRe08}, respectively. 
\smallskip

By now, there are many ways to produce estimates for the Castelnuovo--Mumford regularity of a particular graph using 
combinatorial invariants of the graph.  We begin by mentioning the lower bound obtained from the vertex independence number of the graph.

\begin{prop}[{\cite[Proposition~2.7]{MaNeVPVi}}]\label{prop: independent sets and regularity}
If $V\subset V_G$ is a set of $r$ independent vertices, such that 
the edge set of $G-V$ is not empty, then \mbox{$\reg G \geq r(q-2)$.}
\end{prop}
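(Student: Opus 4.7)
The plan is to invoke Proposition~\ref{prop: computing reg by reducing to Artinian quotient} in its contrapositive form. Pick an edge $e\in E_{G-V}$ and, for each $v_i\in V$, an incident edge $e_i=\set{v_i,u_i}$. Set $d=r(q-2)$ and $\t^\delta = t_e$, so that $d-\deg\t^\delta = r(q-2)-1$. In order to conclude $\reg G \geq r(q-2)$ it suffices to exhibit a monomial $\t^\nu$ of degree $d$ for which no monomial $\t^\mu$ of the same degree with $t_e\mid \t^\mu$ satisfies $\t^\nu-\t^\mu\in I_q(G)$.

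The proposed witness is $\t^\nu = \prod_{i=1}^{r} t_{e_i}^{q-2}$, a monomial of degree $r(q-2)$. Independence of $V$ forces each $u_i\notin V$; hence the $e_i$ are pairwise distinct, none of them equals $e$ (which has no endpoint in $V$), and for every $v_i\in V$ the only edge incident to $v_i$ in the support of $\t^\nu$ is $e_i$. Consequently
$$
\ts\sum_{k\in N_G(v_i)} \nu\sset{v_i,k}\;=\;q-2, \qquad i=1,\dots,r.
$$
Suppose, for contradiction, that some $\t^\mu$ as above satisfies $\t^\nu-\t^\mu\in I_q(G)$. Then Proposition~\ref{prop: congruences} yields, for every $v_i\in V$,
$$
\ts\sum_{k\in N_G(v_i)} \mu\sset{v_i,k} \;\equiv\; q-2 \pmod{q-1}.
$$

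The remaining step is a counting estimate. Each left-hand side is a non-negative integer congruent to $q-2$ modulo $q-1$, hence at least $q-2$, so summing over $i$ gives
$$
\ts\sum_{i=1}^{r}\sum_{k\in N_G(v_i)}\mu\sset{v_i,k}\;\geq\;r(q-2).
$$
On the other hand, because $V$ is independent, every edge of $G$ has at most one endpoint in $V$, so the double sum equals $\sum_{f\cap V\neq\emptyset}\mu(f)$; since $t_e\mid\t^\mu$ with $e\cap V=\emptyset$, this quantity is at most $\deg\t^\mu-\mu(e)\leq r(q-2)-1$, contradicting the previous inequality. The only mild subtlety is keeping the bookkeeping clean: independence of $V$ simultaneously makes the $e_i$ distinct, separates $e$ from each $e_i$, produces the exact congruence class $q-2$ at each $v_i$, and prevents any edge from being double-counted in the global sum.
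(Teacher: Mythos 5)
Your proof is correct, and it is essentially the standard argument for this statement: the paper itself only cites \cite[Proposition~2.7]{MaNeVPVi}, and the proof there proceeds exactly as you do, taking the witness $\prod_i t_{e_i}^{q-2}$ supported on one edge per independent vertex, applying Proposition~\ref{prop: computing reg by reducing to Artinian quotient} with $\t^\delta=t_e$ for a fixed edge $e$ of $G-V$, and using the congruences of Proposition~\ref{prop: congruences} at the vertices of $V$ together with the degree count that independence prevents double-counting. Your bookkeeping (distinctness of the $e_i$, the exact residue $q-2$ at each $v_i$, and the loss of $\mu(e)\geq 1$ in the global sum) is all sound, so there is nothing to add.
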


\noindent
Since $G$ has no isolated vertices, it follows from this result that 
\begin{equation}\label{eq: 319}
\reg G \geq (\alpha(G)-1)(q-2),
\end{equation}
where $\alpha(G)$ is the vertex independence number of $G$. 
However, as can be easily seen by the values of the regularity of Table~\ref{table: values of regularity},
this bound is not sharp if $G$ is non-bipartite or, even for a bipartite graph, if it fails to be $2$-connected.
As an application of Theorem~\ref{thm: regularity of a nested ear decomposition},
we shall give an infinite family of $2$-connected bipartite graphs 
for which the bound \eqref{eq: 319} is not sharp. (See Example~\ref{exam: taino sun}.)
\smallskip

The operation of vertex identification also yields lower bounds for the Castel\-nuovo--Mumford regularity of $G$. The next result was proved in \cite[Proposition~2.5]{MaNeVPVi}.

\begin{prop}\label{prop: vertex identification}
Let $v_1$ and $v_2$ be two nonadjacent vertices of $G$ and let $H$ be the simple graph obtained after  
identifying $v_1$ with $v_2$. Then $\reg G \geq \reg H$.	
\end{prop}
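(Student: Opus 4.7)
The plan is to construct a degree-preserving surjective ring homomorphism $\Pi\colon K[E_G]\to K[E_H]$ carrying $I_q(G)$ into $I_q(H)$, and then to transfer the monomial characterization of regularity provided by Proposition~\ref{prop: computing reg by reducing to Artinian quotient}. To this end, let $v\in V_H$ denote the vertex obtained by identifying $v_1$ and $v_2$, and let $\pi\colon V_G\to V_H$ act as the identity outside $\set{v_1,v_2}$ and send both $v_1$ and $v_2$ to $v$. Since $v_1$ and $v_2$ are nonadjacent in $G$, the map $\pi$ takes each edge of $G$ to an edge (and not a loop) of $H$, inducing a surjection $E_G\twoheadrightarrow E_H$. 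I would then define $\Pi$ by $t_{ij}\mapsto t_{\pi(i)\pi(j)}$; it is clearly graded and surjective.

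Next, I would verify $\Pi(I_q(G))\subseteq I_q(H)$ using Proposition~\ref{prop: congruences}. Since $I_q(G)$ is generated by homogeneous binomials, it suffices to check that whenever $\t^\nu-\t^\mu\in I_q(G)$, the image $\Pi(\t^\nu)-\Pi(\t^\mu)$ satisfies the congruence \eqref{eq: 181} at every vertex of $H$. For a vertex $u\neq v$, the only change in passing from $N_G(u)$ to $N_H(u)$ is that the two edges $\set{u,v_1},\set{u,v_2}$ (if present) merge into the single edge $\set{u,v}$, so the exponents at these edges simply add and the vertex-$u$ exponent sum of $\Pi(\t^\nu)$ equals that of $\t^\nu$ in $G$. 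At the identified vertex $v$, the corresponding sum for $\Pi(\t^\nu)$ is the union of contributions at $v_1$ and at $v_2$ in $G$, hence equals the sum of the two congruences for $\nu$, and likewise for $\mu$. In both cases the required congruence for the image follows immediately from the congruences for $\nu,\mu$ in $G$.

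Finally, set $r=\reg G$ and fix any edge $e\in E_H$ with a chosen preimage $e'\in E_G$ under $\pi$. By Proposition~\ref{prop: computing reg by reducing to Artinian quotient} applied to $G$ with $\t^\delta=t_{e'}$ and $d=r+1$, for every monomial $\t^\nu\in K[E_G]$ of degree $d$ there exists a monomial $\t^\mu$ of degree $d$ with $t_{e'}\mid \t^\mu$ and $\t^\nu-\t^\mu\in I_q(G)$. Given an arbitrary monomial $\bar\t^{\bar\nu}\in K[E_H]$ of degree $d$, I would lift it to a monomial $\t^\nu\in K[E_G]$ of degree $d$ with $\Pi(\t^\nu)=\bar\t^{\bar\nu}$ (possible since $\Pi$ sends each variable surjectively onto variables), apply the property above, and then take images under $\Pi$ to obtain a monomial $\bar\t^{\bar\mu}=\Pi(\t^\mu)$ of degree $d$ with $t_e\mid\bar\t^{\bar\mu}$ and $\bar\t^{\bar\nu}-\bar\t^{\bar\mu}\in I_q(H)$. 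The reverse implication in Proposition~\ref{prop: computing reg by reducing to Artinian quotient}, now applied to $H$ with $\t^\delta=t_e$, then yields $\reg H\leq d-1=r=\reg G$. The main obstacle is the verification in the middle paragraph, which requires a careful accounting of exponents at common neighbors of $v_1$ and $v_2$, where two variables of $K[E_G]$ collapse to a single variable in $K[E_H]$; once that is in place, the regularity inequality falls out of the Artinian reduction without further effort.
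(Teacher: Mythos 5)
Your argument is correct. Note that the paper does not supply its own proof of this proposition: it is quoted from \cite[Proposition~2.5]{MaNeVPVi}. Your route --- the edge-collapsing graded epimorphism $\Pi\colon K[E_G]\to K[E_H]$, the verification via Proposition~\ref{prop: congruences} that $\Pi(I_q(G))\subseteq I_q(H)$ (where nonadjacency of $v_1,v_2$ is exactly what rules out loops, and the congruence at the identified vertex is the sum of the congruences at $v_1$ and $v_2$), and the transfer through Proposition~\ref{prop: computing reg by reducing to Artinian quotient} by lifting monomials --- is sound and is essentially the argument of the cited source.
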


Note that the identification of two vertices can create multiple edges. By simple graph, in the statement, we refer to the graph obtained after the removal 
of all multiple edges created.
\smallskip

Bounds for the Castelnuovo--Mumford regularity of a graph can also be obtained from its subgraphs. 
The next result follows from \cite[Lemma 2.13]{VPVi13}.

\begin{prop}\label{prop: bound from a spanning subgraph}
Let $H$ be a spanning subgraph of $G$ which is non-bipartite if $G$ is non-bipartite. Then
$\reg G\leq \reg H$.
\end{prop}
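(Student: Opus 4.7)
The plan is to invoke Proposition~\ref{prop: computing reg by reducing to Artinian quotient} applied to $G$. Choose a monomial $\t^\delta$ supported on $E_H$ of suitably large degree (for instance $\t^\delta = (\prod_{e\in E_H} t_e)^{q-1}$ will be ample) and set $d=\reg H+\deg\t^\delta$. To conclude $\reg G\leq \reg H$, I must produce, for every monomial $\t^\nu\in K[E_G]$ of degree $d$, a monomial $\t^\mu$ of the same degree with $\t^\delta\mid \t^\mu$ and $\t^\nu-\t^\mu\in I_q(G)$.

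My strategy is to transfer the problem to $K[E_H]$ via an intermediate \emph{replacement lemma}: for $d$ sufficiently large, every monomial $\t^\nu\in K[E_G]$ of degree $d$ admits a monomial $\t^{\nu'}\in K[E_H]$, also of degree $d$, with $\t^\nu-\t^{\nu'}\in I_q(G)$. Granting the lemma, Proposition~\ref{prop: computing reg by reducing to Artinian quotient} applied to $H$ with the same $\t^\delta$ and $d$ furnishes a monomial $\t^\mu\in K[E_H]$ of degree $d$, divisible by $\t^\delta$, with $\t^{\nu'}-\t^\mu\in I_q(H)$; by Corollary~\ref{cor: inclusion of vanishing ideals}, $I_q(H)\subset I_q(G)$, so chaining the two congruences yields $\t^\nu-\t^\mu\in I_q(G)$ with $\t^\delta\mid\t^\mu$, as required.

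The replacement lemma is the technical crux of the argument. By Proposition~\ref{prop: congruences}, it reduces to the following: given the vertex-degree vector $(c_v)_{v\in V_G}\in\NN^{V_G}$ of $\nu$, find $\nu'\in\NN^{E_H}$ of total degree $d$ whose vertex-degree vector is congruent to $(c_v)$ modulo $q-1$. Existence is governed by the sublattice of $\mathbb{Z}^{V_G}$ generated by the edge-vectors $\{e_i+e_j:\{i,j\}\in E_H\}$; the hypothesis that $H$ is non-bipartite whenever $G$ is ensures that this sublattice agrees, modulo $q-1$, with the one generated by the edges of $G$, so every achievable congruence class over $E_G$ is also achievable over $E_H$. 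The hard part will be the positivity: one must guarantee a non-negative, degree-exactly-$d$ representative, which is why $\t^\delta$ is chosen of large enough degree to provide ample slack, and a careful treatment of the components of $H$—in particular when some component is a bipartite subgraph even though $H$ as a whole is non-bipartite—will be necessary to ensure the lattice condition holds component-by-component.
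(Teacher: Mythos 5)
Your framework (reduce via Proposition~\ref{prop: computing reg by reducing to Artinian quotient}, transfer to $K[E_H]$, and use $I_q(H)\subset I_q(G)$ from Corollary~\ref{cor: inclusion of vanishing ideals}) is sound, but the replacement lemma on which everything rests is false, and so is the lattice claim you offer in its support. Take $G$ to be the $4$-cycle on $1,2,3,4$ and $H$ the spanning subgraph with edge set $\set{\set{1,2},\set{3,4}}$; both graphs are bipartite, so the hypothesis of the proposition is satisfied. Every $\nu'\in\NN^{E_H}$ has equal degrees at the vertices $1$ and $2$, while the degree-$d$ monomial $t_{23}t_{12}^{d-1}$ has degrees $d$ and $d-1$ there; since $q>2$, Proposition~\ref{prop: congruences} shows that no monomial supported on $E_H$, of any degree, is congruent to it modulo $I_q(G)$. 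Nevertheless the proposition holds in this example: $\reg G=q-2$ (even cycle) and $\reg H=q-2$ (here $I_q(H)=(t_{12}^{q-1}-t_{34}^{q-1})$ is a complete intersection). So the obstruction is not positivity or the size of $\t^\delta$: whenever $H$ is disconnected, the subgroup of $(\mathbb{Z}/(q-1))^{V_G}$ generated by the edge vectors of $H$ splits as a product over the components of $H$ and is genuinely smaller than the subgroup generated by the edges of $G$. Non-bipartiteness of $H$ does not repair this; for instance, if $G$ is the bowtie made of the triangles $123$ and $145$ and $H$ is the triangle $123$ together with the edge $\set{4,5}$, the class of $e_2+e_4$ is attainable over $E_G$ but not over $E_H$. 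A spanning subgraph need not be connected, so your argument does not cover the statement as given.

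If you add the hypothesis that $H$ is connected (hence, in the bipartite case, carries the same bipartition as $G$), your plan can be carried out: the two subgroups then coincide, and the exact-degree requirement is met for large $d$ by multiplying by $(q-1)$-st powers of variables and, when $q-1$ is even, by the $\frac{q-1}{2}$-th power of the product of the variables along an odd cycle of $H$. For disconnected $H$, however, the idea of rewriting $\t^\nu$ as a monomial supported on $E_H$ cannot work at all, and a different mechanism is required; this is consistent with the paper, which does not argue this way but quotes the result from \cite[Lemma~2.13]{VPVi13}.
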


\noindent
Reversing the roles of $G$ and $H$, this result can also be used to produce 
lower bounds of the regularity. For instance, if $G$ is bipartite and spans a 
$\K_{a,b}$ then 
$$
\ts \reg G \geq  (\max\set{a,b}-1)(q-2)
$$
and if $G$ is non-bipartite with $|V_G|\geq 4$, then
$$
\ts \reg G \geq \reg \K_{|V_G|} =  \bigl \lceil\frac{(|V_G|-1)(q-2)}{2} \bigr	\rceil.
$$

Another way to obtain upper bounds for the regularity of a graph is by using a decomposition
of $G$ into two subgraphs with, at least, one edge in common. 

\begin{prop}[{\cite[Proposition~2.6]{MaNeVPVi}}]\label{prop: reg bound from graph dec}
If $H_1$ and $H_2$ are two subgraphs of $G$ with a common edge and 
$G=H_1\cup H_2$ then 
$$
\reg G \leq \reg H_1 + \reg H_2.
$$
\end{prop}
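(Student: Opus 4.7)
The plan is to invoke Proposition~\ref{prop: computing reg by reducing to Artinian quotient} applied to $G$ with $\t^\delta = t_{ij}$, where $\{i,j\}\in E_{H_1}\cap E_{H_2}$ is a shared edge (which exists by hypothesis), and with $d = \reg H_1 + \reg H_2 + 1$. Writing $d_k = \reg H_k$, this reduces the statement to the following task: given any monomial $\t^\nu\in K[E_G]$ of degree $d_1+d_2+1$, produce $\t^\mu$ of the same degree with $t_{ij}\mid\t^\mu$ and $\t^\nu - \t^\mu \in I_q(G)$.

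To build $\t^\mu$, factor $\t^\nu = \t^{\nu_1}\t^{\nu_2}$, where $\t^{\nu_1}$ collects all variables of $\t^\nu$ corresponding to edges of $H_1$ and $\t^{\nu_2}$ collects the rest; the hypothesis $G = H_1\cup H_2$ guarantees that $\t^{\nu_2}$ is supported on $E_{H_2}\setminus E_{H_1}$. Since $\deg\t^{\nu_1} + \deg\t^{\nu_2} = d_1+d_2+1$, pigeonhole forces either $\deg\t^{\nu_1}\geq d_1+1$ or $\deg\t^{\nu_2}\geq d_2+1$. In the first case, Proposition~\ref{prop: computing reg by reducing to Artinian quotient} applied to $H_1$ in degree $\deg\t^{\nu_1}$ (available because $\reg H_1 \leq d_1$ yields the characterization in every degree $\geq d_1+1$) produces a monomial $\t^{\mu_1}\in K[E_{H_1}]$ of the same degree as $\t^{\nu_1}$, divisible by $t_{ij}$, with $\t^{\nu_1}-\t^{\mu_1}\in I_q(H_1)$. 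Setting $\t^\mu = \t^{\mu_1}\t^{\nu_2}$ meets the degree and divisibility requirements, and $\t^\nu-\t^\mu = (\t^{\nu_1}-\t^{\mu_1})\t^{\nu_2}$ lies in $I_q(G)$ because $I_q(H_1)\subseteq I_q(G)$ by Corollary~\ref{cor: inclusion of vanishing ideals}. The second case is symmetric, using $H_2$ to produce $\t^\mu = \t^{\nu_1}\t^{\mu_2}$.

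I do not anticipate a genuine obstacle here; the only subtle point is the observation that Proposition~\ref{prop: computing reg by reducing to Artinian quotient} supplies the required monomial in \emph{every} degree $\geq \reg H_k + 1$, rather than only in degree $\reg H_k + 1$. This flexibility is what allows the unbalanced split $d_1+d_2+1 = \deg\t^{\nu_1} + \deg\t^{\nu_2}$ to be handled, with one of the two pieces absorbing any excess degree above the pigeonhole threshold.
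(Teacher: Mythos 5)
Your argument is correct: the Artinian-reduction criterion of Proposition~\ref{prop: computing reg by reducing to Artinian quotient} applied with $\t^\delta=t_{ij}$ for a shared edge, combined with the pigeonhole split of $\t^\nu$ over $E_{H_1}$ and $E_{H_2}$ and the inclusion $I_q(H_k)\subseteq I_q(G)$ from Corollary~\ref{cor: inclusion of vanishing ideals}, is exactly how this is proved. Note that the present paper gives no proof, citing \cite[Proposition~2.6]{MaNeVPVi} instead, and your argument is essentially the one given there; your remark that the criterion applies in every degree $\geq \reg H_k+1$ (not just the threshold degree) is the right point to flag.
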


A graph is said $2$-vertex-connected (or simply $2$-connected) if
$|V_G|\geq 3$ and $G-v$ is connected for every $v\in V_G$. Any 
graph can be decomposed into a set of edge disjoint subgraphs consisting of either isolated vertices, 
single edges (called bridges) or maximal $2$-connected subgraphs. 
This decomposition is called the block decomposition 
of the graph.
In \cite{NeVPVi14} the relation between the regularity of a bipartite graph and the 
regularities of the members of its block decomposition 
was described. 

\begin{prop}[{\cite[Theorem~7.4]{NeVPVi14}}]\label{prop: block_dec}
Let $G$ be a simple bipartite graph with\-out isolated vertices and let $G = H_1\cup \dots \cup H_m$ be the block decomposition of $G$, then
\begin{equation}\label{eq: 315}
\ts\reg G = \sum_{k=1}^m \reg H_i + (m-1)(q-2).
\end{equation}
\end{prop}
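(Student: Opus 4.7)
I would proceed by induction on $m$, the number of blocks. The base case $m=1$ is immediate. For the inductive step, choose $H_m$ to be a leaf of the block tree of $G$, so that $H_m$ meets $G' := H_1 \cup \cdots \cup H_{m-1}$ at a single cut vertex $v$. Applying the inductive hypothesis to $G'$ reduces the statement to the two-block identity
$$
\reg G = \reg G' + \reg H_m + (q-2).
$$

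For the upper bound, pick an edge $e$ of $H_m$ incident to $v$ and write $G = (G'+e)\cup H_m$, where $G'+e$ denotes $G'$ with the pendant edge $e$ attached. Since these subgraphs share the edge $e$, Proposition \ref{prop: reg bound from graph dec} gives $\reg G \le \reg(G'+e)+\reg H_m$. It then suffices to prove $\reg(G'+e) \le \reg G' + (q-2)$, which I would establish directly from Proposition \ref{prop: computing reg by reducing to Artinian quotient}: using the Artinian reduction of $G'$ together with the torus relation $t_e^{q-1}\equiv t_{e'}^{q-1}$ contained in $I_q\subseteq I_q(G'+e)$ for any $e' \in E_{G'}$, every monomial of sufficiently large degree in $K[E_{G'+e}]$ can be rewritten, modulo $I_q(G'+e)$, into the required divisor-constrained form.

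For the lower bound, I would use Proposition \ref{prop: computing reg by reducing to Artinian quotient} to exhibit a witness of degree $\sum_i \reg H_i + (m-1)(q-2)$. By the definition of regularity, for each $H_i$ there exist monomials $\t^{\delta_i}$ and witness monomials $\t^{\nu_i}\in K[E_{H_i}]$ certifying $\reg H_i$. After fixing an edge $e_0$ incident to the cut vertex $v$, I would set $\t^\delta = \prod_i \t^{\delta_i}\cdot t_{e_0}^{q-2}$ and $\t^\nu = \prod_i \t^{\nu_i}\cdot t_{e_0}^{q-2}$. Using Proposition \ref{prop: congruences} together with the restriction property of Corollary \ref{cor: inclusion of vanishing ideals}---every congruence in $I_q(G)$ restricts to each $I_q(H_i)$---one verifies that no divisor-constrained $\t^\mu$ can match $\t^\nu$ modulo $I_q(G)$, giving the required lower bound.

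The delicate point in both directions is the behavior at the cut vertex $v$: the congruence in $I_q(G)$ at $v$ couples contributions from both blocks, while at every other vertex it decouples blockwise. The extra summand $(q-2)$ reflects the additional flexibility provided by the factor $t_{e_0}^{q-2}$ at $v$, and controlling the mixed congruence at $v$---showing it cannot be exploited to collapse the degree further---is the main technical obstacle.
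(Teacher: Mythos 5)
The paper does not actually prove this proposition: it is imported from \cite[Theorem~7.4]{NeVPVi14}, where it is established by quite different means (through the algebraic structure of the associated lattice ideal), so your attempt has to be judged on its own terms. Your inductive reduction to the two-block identity $\reg G = \reg G' + \reg H_m + (q-2)$ is the right skeleton, and your upper bound is correct --- in fact easier than you make it: since the second endpoint of $e$ is a degree-one vertex of $G'+e$, Proposition~\ref{prop: additivity of reg on leaves} already gives the \emph{equality} $\reg(G'+e)=\reg G'+(q-2)$, so no bespoke Artinian computation is needed. (You do, however, need to dispose separately of disconnected $G$, where a block may share no vertex, let alone an edge, with the rest of the graph and Proposition~\ref{prop: reg bound from graph dec} is unavailable.)

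The genuine gap is in the lower bound, and it is twofold. First, the degree bookkeeping does not close: since $t_{e_0}^{q-2}$ divides both $\t^\nu$ and $\t^\delta$, it cancels in $\deg\t^\nu-\deg\t^\delta$, so by Proposition~\ref{prop: computing reg by reducing to Artinian quotient} your witness could at best certify $\reg G\geq \sum_i\reg H_i-m+1$; and even if $t_{e_0}^{q-2}$ is placed only in $\t^\nu$, the product of the two block witnesses in the two-block step has degree $\reg G'+\reg H_m+(q-2)-2+\deg\t^\delta$, one less than the degree $\reg G'+\reg H_m+(q-2)-1+\deg\t^\delta$ at which a non-matchable monomial must be exhibited, because each block witness carries its own ``$-1$'' and only one may be spent overall. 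Second, and more seriously, the assertion that no $\t^\mu$ divisible by $\t^\delta$ matches $\t^\nu$ is exactly the content of the theorem and is not proved. Writing $\t^\mu=\prod_i\t^{\mu_i}$ with $\t^{\mu_i}\in K[E_{H_i}]$, the congruences of Proposition~\ref{prop: congruences} decouple blockwise at every vertex except $v$, where they constrain only the sum over all blocks modulo $q-1$; one must show that this single coupled congruence cannot be used to transfer enough degree between the blocks to defeat both block witnesses simultaneously, and it is precisely this transfer analysis that produces the summand $(q-2)$. Labelling it ``the main technical obstacle'' identifies the difficulty but leaves the proof incomplete; for a model of how such an analysis is actually carried out (reduction of exponents along the attached path, propagation of the congruences vertex by vertex, and cancellation of a regular monomial factor), see the lower-bound half of the proof of Proposition~\ref{prop: adding a path to the endpoints of an edge}.
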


The previous result does not hold if we drop the bipartite assumption. 
The graph in Figure~\ref{fig: block dec requires bipartite} is a counterexample. 
We used \emph{Macaulay2}, \cite{M2}, to compute 
its Castelnuovo--Mumford for some values of  the order of the ground field. For $q\in \set{3,4,5,7,8,9,11,13,16}$, the regularity is given by the formula $\lceil 5(q-2)/2 \rceil$. 
On the other hand, its block decomposition has three blocks; two triangles, of regularity $2(q-2)$, and a cut-edge, of
regularity zero. Using this in formula (\ref{eq: 315}) yields $6(q-2)$.

\begin{figure}[ht]
\begin{center}
\begin{tikzpicture}[line cap=round,line join=round, scale=1.5]
\draw [fill=black] (-.5,0) circle (1pt);
\draw [fill=black] (.5,0) circle (1pt);
\draw [fill=black] (-1.25,.5) circle (1pt);
\draw [fill=black] (-1.25,-.5) circle (1pt);
\draw [fill=black] (1.25,.5) circle (1pt);
\draw [fill=black] (1.25,-.5) circle (1pt);
\draw (-1.25,.5)-- (-1.25,-.5);
\draw (1.25,.5) -- (1.25,-.5);
\draw (-1.25,.5) --(-.5,0);
\draw (-1.25,-.5) --(-.5,0);
\draw (1.25,.5) --(.5,0);
\draw (1.25,-.5) --(.5,0);
\draw (-.5,0) -- (.5,0);
\end{tikzpicture}
\end{center}
\caption{}
\label{fig: block dec requires bipartite} 
\end{figure}

The next proposition gives an additive formula for the regularity of 
$G$ with respect to its leaves which holds for both bipartite and non-bipartite graphs.

\begin{prop}[{\cite[Proposition~2.4]{MaNeVPVi}}]\label{prop: additivity of reg on leaves}
If $v_1,\dots,v_r$ are vertices of degree one and $G^\flat$ is the graph defined by 
$G^\flat = G - \set{v_1,\dots,v_r}$ then 
$$
\reg G = \reg G^\flat + r(q-2).
$$
\end{prop}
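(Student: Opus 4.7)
The plan is induction on $r$, the substance being the case $r=1$: let $v$ be a leaf of $G$ with unique neighbor $u$ and put $G^\flat=G-v$, so that $K[E_G]=K[E_{G^\flat}][t_{uv}]$. The tools I rely on are Proposition~\ref{prop: computing reg by reducing to Artinian quotient} (the monomial criterion for regularity), Proposition~\ref{prop: congruences} (the congruence characterization of $I_q(G)$), Corollary~\ref{cor: inclusion of vanishing ideals} ($I_q(G)\cap K[E_{G^\flat}]=I_q(G^\flat)$), and the relation $t_{uv}^{q-1}\equiv t_e^{q-1}\pmod{I_q(G)}$ coming from $I_q\subset I_q(G)$, valid for any edge $e$ of $G^\flat$ (such an $e$ exists since $G^\flat$ has no isolated vertices).

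For the upper bound $\reg G \leq \reg G^\flat + (q-2)$, I fix a monomial $\t^\sigma\in K[E_{G^\flat}]$ realizing $\reg G^\flat$ in Proposition~\ref{prop: computing reg by reducing to Artinian quotient} and apply the same proposition inside $K[E_G]$ with $\t^\delta=\t^\sigma$ and $d=\reg G^\flat+(q-2)+\deg\t^\sigma$. Given a monomial $\t^\nu$ of degree $d$, I write $\t^\nu=t_{uv}^a\t^{\nu'}$ with $\t^{\nu'}\in K[E_{G^\flat}]$ and $a=a_0+k(q-1)$, $0\leq a_0\leq q-2$. Mass-transferring via $t_{uv}^{q-1}\equiv t_e^{q-1}$ yields
\[
\t^\nu \equiv t_{uv}^{a_0}\bigl(t_e^{k(q-1)}\t^{\nu'}\bigr)\pmod{I_q(G)},
\]
with the parenthesized factor lying in $K[E_{G^\flat}]$ of degree $d-a_0\geq \reg G^\flat+\deg\t^\sigma$. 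Proposition~\ref{prop: computing reg by reducing to Artinian quotient} for $G^\flat$ then produces a $\t^\sigma$-multiple of the same degree congruent to it modulo $I_q(G^\flat)\subset I_q(G)$; multiplying by $t_{uv}^{a_0}$ delivers the $\t^\mu$ required.

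For the lower bound $\reg G \geq \reg G^\flat + (q-2)$, the converse direction of Proposition~\ref{prop: computing reg by reducing to Artinian quotient} supplies a monomial $\t^\tau\in K[E_{G^\flat}]$ of degree $\reg G^\flat-1+\deg\t^\sigma$ not congruent modulo $I_q(G^\flat)$ to any $\t^\sigma$-multiple. I take $\t^\nu=t_{uv}^{q-2}\t^\tau$, of degree $d=\reg G^\flat+(q-2)-1+\deg\t^\sigma$, and assume for contradiction that some $\t^\mu=t_{uv}^b\t^{\mu'}$ of degree $d$ with $\t^\sigma\mid \t^\mu$ (whence $\t^\sigma\mid \t^{\mu'}$) satisfies $\t^\nu-\t^\mu\in I_q(G)$. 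Proposition~\ref{prop: congruences} at $v$ forces $b\equiv q-2\pmod{q-1}$, so $t_{uv}^{q-1}\equiv t_e^{q-1}$ lets me rewrite $\t^\mu\equiv t_{uv}^{q-2}\t^{\tilde\mu}$ with $\t^{\tilde\mu}\in K[E_{G^\flat}]$ still divisible by $\t^\sigma$ and of the same degree as $\t^\tau$. Then $t_{uv}^{q-2}(\t^\tau-\t^{\tilde\mu})\in I_q(G)$; since $t_{uv}$ is a non-zero-divisor on $K[E_G]/I_q(G)$, followed by Corollary~\ref{cor: inclusion of vanishing ideals}, this forces $\t^\tau-\t^{\tilde\mu}\in I_q(G^\flat)$, contradicting the choice of $\t^\tau$.

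The most delicate step is the final descent: one has to exploit that the congruence at $v$ pins down $t_{uv}$'s exponent only mod $(q-1)$, that the congruences at $u$ in $G$ collapse onto those at $u$ in $G^\flat$ once the $v$-condition is imposed, and that both the non-zero-divisor property of monomials and Corollary~\ref{cor: inclusion of vanishing ideals} are genuinely needed to pull the binomial relation back from $I_q(G)$ into $I_q(G^\flat)$.
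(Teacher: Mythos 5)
Your argument is correct. The paper itself gives no proof of this proposition — it is quoted from \cite[Proposition~2.4]{MaNeVPVi} — but your two-sided argument via Proposition~\ref{prop: computing reg by reducing to Artinian quotient}, normalizing the exponent of $t_{uv}$ modulo $q-1$ with $t_{uv}^{q-1}-t_e^{q-1}\in I_q\subset I_q(G)$, reading off the congruence at the leaf $v$, cancelling the regular factor $t_{uv}^{q-2}$, and descending to $I_q(G^\flat)$ by Corollary~\ref{cor: inclusion of vanishing ideals}, is exactly the Artinian-reduction technique the paper uses for its own results (compare the proof of Proposition~\ref{prop: adding a path to the endpoints of an edge}), and every step checks out. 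The only point worth flagging is the induction on $r$: peeling off the leaves one at a time requires each intermediate graph to satisfy the standing no-isolated-vertices hypothesis, which holds precisely when no two of the $v_i$ are adjacent (i.e.\ no component of $G$ is a single edge on two of the $v_i$), an assumption implicit in the statement since $\reg G^\flat$ must be defined.
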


Proposition~\ref{prop: block_dec} motivates the study of the regularity of a general $2$-connected bipartite graph. 
In view of Whitney's structure theorem for $2$-connected graphs (see Section~\ref{sec: nested ear decomposition}) 
one is naturally drawn to the problem of assessing the change produced in the regularity when we attach a path to a graph. 
We will explore this idea in the next two sections.

\section{Ears and Regularity}
\label{sec: ears and regularity}

The aim of this section is to provide a relation between the Castelnuovo--Mumford regularities 
of a graph and of the graph obtained by attaching a path by its end-vertices.
The main theorem of this section, Theorem~\ref{thm: adding a path to the endpoints of path with degree 2 inner vertices},
states that the addition of such a path increases the regularity by $\lfloor \frac{\ell}{2}\bigr \rfloor (q-2)$, 
where $\ell$ is the length of the path. In this result,
we assume that $G$ is bipartite and that the end-vertices of the path are identified with two vertices of the graph 
which, in turn, are connected in the graph by a path the inner vertices of which have degree two. Both assumptions
are necessary (see Examples~\ref{exam: ear is needed} and \ref{exam: bipartite is needed}). 
Proposition~\ref{prop: adding a path to the endpoints of an edge} addresses a special case in which we can afford to drop 
the bipartite assumption. 
\smallskip

By a path, $\P\subset G$, we mean a subgraph of $G$ endowed with an order of its vertices,
$v_0,v_1,\dots,v_\ell$, where $\ell>0$, such that $v_1,\dots,v_\ell$ are $\ell$ distinct vertices 
and $E_\P$ consists of the $\ell$ distinct edges $\set{v_i,v_{i+1}}$, $i=0,\dots,\ell-1$.
If $v_0=v_\ell$, $\P$ is also called a cycle. However, note that  
since we are assuming that the edges $\set{v_i,v_{i+1}}$ are distinct, the case $\ell = 2$ and $v_0=v_2$ is not allowed.
The \emph{inner vertices} of $\P$ are ${v_1,\dots,v_{\ell-1}}$ and the \emph{end-vertices} of $\P$ are $v_0$ and $v_\ell$. 
The set of inner vertices of $\P$ will be denoted by $\P^\circ \subset V_G$.
The number of edges in $\P$ is called the length of $\P$ and will be denoted by $\ell(\P)$.

\begin{definition}\label{def: ear}
A path $\P\subset G$ is called an \emph{ear} of $G$ if
all inner vertices of $\P$  have degree two in $G$. 
If the end-vertices of $\P$ are distinct, $\P$ is called
an open ear if they coincide, $\P$ is called a pending cycle.
\end{definition}

\begin{prop}\label{prop: adding a path to the endpoints of an edge}
Let $\P\subset G$ be an ear of $G$, of length $\ell>1$. Assume either: 
\begin{enumerate}
\item $\ell$ is odd and the end-vertices of $\P$ are distinct and adjacent in $G$ or
\item $\ell$ is even and the end-vertices of $\P$ coincide.
\end{enumerate}
Denote the graph $G-\P^\circ$ by $G^\flat$ and assume that $G^\flat$ has no isolated vertices. Then
\begin{equation}\label{eq: L583}
\textstyle \reg G = \reg (G^\flat) + \bigl \lfloor \frac{\ell}{2}\bigr \rfloor (q-2).
\end{equation}
\end{prop}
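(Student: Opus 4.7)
The plan is to prove the two inequalities $\reg G \leq \reg G^\flat + \lfloor \ell/2 \rfloor (q-2)$ and $\reg G \geq \reg G^\flat + \lfloor \ell/2 \rfloor (q-2)$ separately and conclude \eqref{eq: L583}.

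For the upper bound I would split into cases. In case (i), $\P$ together with the edge $\{v_0,v_\ell\}$ of $G^\flat$ forms an even cycle $C$ of length $\ell+1$, whose regularity equals $\tfrac{\ell-1}{2}(q-2)=\lfloor \ell/2\rfloor (q-2)$ by Table~\ref{table: values of regularity}; since $G=G^\flat\cup C$ and the two subgraphs share the edge $\{v_0,v_\ell\}$, Proposition~\ref{prop: reg bound from graph dec} yields the bound at once. In case (ii), $\P$ is itself an even cycle pendant at the single vertex $v_0=v_\ell$. When $G$ is bipartite the block decomposition of $G$ is obtained from that of $G^\flat$ by adjoining $\P$ as an extra block, and Proposition~\ref{prop: block_dec} gives
$$\reg G = \reg G^\flat + \tfrac{\ell-2}{2}(q-2) + (q-2) = \reg G^\flat + \tfrac{\ell}{2}(q-2),$$
settling both bounds simultaneously. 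When $G$ is not bipartite the upper bound has to be obtained directly via the Artinian criterion Proposition~\ref{prop: computing reg by reducing to Artinian quotient}, by selecting a monomial $\t^\delta$ of degree $\lfloor \ell/2\rfloor(q-2)+1$ supported on $E_\P$ and using Proposition~\ref{prop: congruences} to congruently move each test monomial to one divisible by $\t^\delta$.

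For the lower bound I would apply Proposition~\ref{prop: computing reg by reducing to Artinian quotient} in the contrapositive. By the sharpness of $\reg G^\flat$ there are monomials $\t^{\delta_0},\t^{\nu_0}\in K[E_{G^\flat}]$ with $\deg \t^{\nu_0}=\reg G^\flat + \deg \t^{\delta_0} - 1$ such that no monomial of that degree divisible by $\t^{\delta_0}$ is congruent to $\t^{\nu_0}$ modulo $I_q(G^\flat)$. I would extend $\t^{\nu_0}$ to $\t^\nu \in K[E_G]$ by multiplying by $\prod_{e} t_e^{q-2}$, the product over an alternating set of $\lfloor \ell/2 \rfloor$ edges of $\P$; then $\deg \t^\nu - \deg \t^{\delta_0} = \reg G^\flat + \lfloor \ell/2 \rfloor (q-2) - 1$, and it suffices to show that no monomial of this degree divisible by $\t^{\delta_0}$ is congruent to $\t^\nu$ modulo $I_q(G)$. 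The congruences \eqref{eq: 181} at the degree-two inner vertices of $\P$ should force any candidate $\t^\mu$ to reproduce the alternating exponent pattern on $E_\P$ modulo $q-1$; Corollary~\ref{cor: inclusion of vanishing ideals} then restricts $\t^\mu$ on $E_{G^\flat}$ to a monomial congruent to $\t^{\nu_0}$ modulo $I_q(G^\flat)$, contradicting the choice of $\t^{\nu_0}$. The adjacency (case (i)) or coincidence (case (ii)) of the end-vertices of $\P$ is exactly what makes the boundary congruences at $v_0,v_\ell$ consistent with a monomial supported only on $E_{G^\flat}$.

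The hardest step will be the rigidity analysis of the alternating exponent pattern along $\P$ inside the Artinian quotient: the degree-two condition at each inner vertex reduces the system of congruences modulo $q-1$ to a one-parameter family, and one has to combine the parity of $\ell$ with the boundary behavior at $v_0,v_\ell$ to isolate the unique alternating solution that matches the chosen $\t^\delta$. The non-bipartite subcase of (ii) appears most delicate, since the shortcut through Proposition~\ref{prop: block_dec} is unavailable there and both the upper and the lower bound have to be worked out at the Artinian level.
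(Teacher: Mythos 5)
Your lower bound and your case~(i) upper bound follow essentially the same route as the paper. For the lower bound the paper does exactly what you sketch, with $\t^{\delta_0}$ taken to be a single variable $t_{kl}\in E_{G^\flat}$: the congruences of Proposition~\ref{prop: congruences} at the degree-two inner vertices force an alternating pattern $(a,b)$ with $a+b\equiv q-2 \pmod{q-1}$ on $E_\P$, and then the binomial of the even cycle ($\P\cup\set{v_0,v_\ell}$ in case~(i), $\P$ itself in case~(ii)) is used to transfer the $\P$-supported part of the candidate $\t^\mu$ onto $(t_{23}t_{45}\cdots)^{q-2}$ times a monomial in $K[E_{G^\flat}]$; cancelling the regular element $(t_{23}t_{45}\cdots)^{q-2}$ and invoking Corollary~\ref{cor: inclusion of vanishing ideals} gives the contradiction. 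Your sketch gestures at all of these steps and the mechanism is sound; the only part you leave implicit is the explicit use of the cycle binomial \eqref{eq: L654} to carry out the transfer. Your case~(i) upper bound (decompose $G$ into $G^\flat$ and the even cycle $\P\cup\set{v_0,v_\ell}$, apply Proposition~\ref{prop: reg bound from graph dec}) is verbatim the paper's.

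The genuine gap is the upper bound in case~(ii) when $G$ is not bipartite. The proposition deliberately makes no bipartiteness assumption, so this subcase must be handled, and your plan for it --- ``select a monomial $\t^\delta$ supported on $E_\P$ and move each test monomial onto it via Proposition~\ref{prop: congruences}'' --- is a restatement of the goal rather than an argument; as stated it does not explain how the part of a test monomial supported on $E_{G^\flat}$ is to be controlled, which is precisely where knowledge of $\reg G^\flat$ must enter. The difficulty you anticipate here is, however, illusory: the paper disposes of case~(ii) uniformly (bipartite or not) by writing $G = H\cup \P$ with $H = G^\flat\cup\set{v_0,v_1}$, so that $H$ and $\P$ share the edge $\set{v_0,v_1}$, and then applying Proposition~\ref{prop: reg bound from graph dec} together with Proposition~\ref{prop: additivity of reg on leaves} (the inner vertex $v_1$ is a leaf of $H$), which yields $\reg G\leq \reg G^\flat + (q-2) + \frac{\ell-2}{2}(q-2)$. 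Your block-decomposition shortcut for bipartite $G$ in case~(ii) is correct and even gives both inequalities at once, but it cannot replace the missing non-bipartite argument. With the paper's decomposition substituted for your unexecuted Artinian computation, your proof closes.
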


\begin{proof} Note that since $\ell>1$ and, when $\ell$ is even $\ell \geq 4$, we get $\ell \geq 3$.
Without loss of generality, we may assume that $\P$ is the path in $G$ given by  
$(1,\dots,\ell+1)$, if $\ell$ is odd or $(1,\dots,\ell,1)$ if $\ell$ is even.
(See Figure~\ref{fig: adding a path to an edge}.)
\begin{figure}[ht]
\begin{center}
\begin{tikzpicture}[line cap=round,line join=round, scale=15/13]

\draw (.5,0) -- (-.5,0);
\draw (.5,0) -- (0.75,-.01);
\draw (0.9,-.02) node {$\ss \cdots$};
\draw (.5,0) -- (.45,-.15);
\draw (-.5,0) -- (-0.7,-.01);
\draw (-0.8,-.02) node {$\ss \cdots$};
\draw (-.5,0) -- (-.45,-.15);
\draw (-.5,0) -- (-.35,-.1);

\draw [fill=black] (0.5,0) node[anchor=north west] {$\scriptstyle \ell +1$} circle (1.3pt);
\draw [fill=black] (-0.5,0) node[anchor=north east] {$\scriptstyle 1$} circle (1.3pt);
\draw (-.5,0) .. controls (-2.5,2) and (2.5,2) .. (.5,0);
\draw [fill=white,white] (0,1.5) circle (7pt);
\draw (0.02,1.48) node {$\cdots$};
\draw [fill=black] (-.91,.55) node[anchor=east] {$\scriptstyle 2$} circle (1.3pt);
\draw [fill=black] (.91,.55) node[anchor=west] {$\scriptstyle \ell$} circle (1.3pt);
\draw [fill=black] (-.7,1.32) node[anchor=south east] {$\scriptstyle 3$} circle (1.3pt);
\draw [fill=black] (.7,1.32) node[anchor=south west] {$\scriptstyle \ell-1$} circle (1.3pt);
\draw [gray,->] (-1.4,1) -- (-1.1,.9);
\draw (-1.35,.9) node[black, anchor = south east] {$\P$};

\draw (3,-.7) node {\footnotesize{(b)}};


\draw (3.2,0) -- (3,0);
\draw (3.35,-.01) node {$\ss\cdots$};
\draw (3,0) -- (2.95,-.15);
\draw (3,0) -- (2.75,-.01);
\draw (2.65,-.02) node {$\ss \cdots$};

\draw [fill=black] (3,0) node[anchor=north west] {$\scriptstyle 1$} circle (1.3pt);

\draw (3,0) .. controls (0,2) and (6,2) .. (3,0);
\draw [fill=white,white] (3,1.5) circle (7pt);

\draw (3.02,1.48) node {$\cdots$};
\draw [fill=black] (2.35,.55) node[anchor=east] {$\scriptstyle 2$} circle (1.3pt);

\draw [fill=black] (3.65,.55) node[anchor=west] {$\scriptstyle \ell$} circle (1.3pt);
\draw [fill=black] (2.3,1.32) node[anchor=south east] {$\scriptstyle 3$} circle (1.3pt);
\draw [fill=black] (3.7,1.32) node[anchor=south west] {$\scriptstyle \ell-1$} circle (1.3pt);
\draw [gray,->]  (4.3,1) -- (4,.9) ;
\draw (4.3,.9) node[black, anchor = south west] {$\P$};

\draw (.05,-.7) node {\footnotesize{(a)}};

\end{tikzpicture}
\end{center}
\caption{}
\label{fig: adding a path to an edge} 
\end{figure}
In both cases, it follows that $G$ contains an even cycle the vertex set of which coincides 
with $V_\P$. Since the generators of $P_G$, the toric ideal of $G$, are given by the closed even 
walks on $G$, using the relation between $P_G$ and $I_q(G)$, expressed in \eqref{eq: L195}, it follows that
\begin{equation}\label{eq: L654}
\renewcommand{\arraystretch}{1.5}
\left \{
\begin{array}{l}
t_{12}t_{34}\cdots t_{\ell(\ell+1)} -t_{23}t_{45}\cdots t_{(\ell+1)1}\in I_q(G),\quad \text{if $\ell$ is odd, or}\\
t_{12}t_{34}\cdots t_{(\ell-1)\ell} -t_{23}t_{45}\cdots t_{\ell 1}\in I_q(G),\quad \text{if $\ell$ is even.}\\
\end{array} 
\right.
\end{equation}
Let us start by showing that 
$$
\ts \reg G \geq \reg G^\flat + \bigl \lfloor \frac{\ell}{2}\bigr \rfloor (q-2)
$$
Fix $t_{kl}\in E_{G^\flat}$. By Proposition~\ref{prop: computing reg by reducing to Artinian quotient}, 
applied to the graph $G^\flat$,
we deduce that there exists $\t^\alpha \in K[E_{G^\flat}]$, of degree $\reg({G^\flat})$, 
for which no monomial $\t^\beta \in K[E_{G^\flat}]$ divisible by $t_{kl}$ is such that 
$\t^\alpha -\t^\beta\in I_q({G^\flat})$. 
\smallskip

\noindent
Let $\t^\nu \in K[E_{G}]$ be the monomial of degree $\reg({G^\flat})+\bigl \lfloor \frac{\ell}{2}\bigr \rfloor (q-2)$ given by:
$$
\renewcommand{\arraystretch}{1.5}
\left \{
\begin{array}{l}
\t^\nu = \t^\alpha (t_{23}t_{45}\cdots t_{(\ell-1)\ell})^{q-2},\quad \text{if $\ell$ is odd, or}\\
\t^\nu = \t^\alpha (t_{23}t_{45}\cdots t_{\ell 1})^{q-2},\quad \text{if $\ell$ is even.}\\
\end{array} 
\right.
$$
Suppose there
exists $\t^\mu \in K[E_{G}]$, with $\mu\sset{k,l}>0$, such that 
\begin{equation} \label{eq:204}
\t^\nu - \t^\mu \in I_q(G).
\end{equation}
Modifying appropriately, with the use of $t_{ij}^{q-1} - t_{kl}^{q-1} \in I_q(G)$, we may assume
that $0\leq \mu\sset{i, i+1} \leq q-2$, for all $i=1,\dots,\ell-1$, and $0\leq \mu\sset{\ell,\ell+1}\leq q-2$, if $\ell$
is odd, or \mbox{$0\leq \mu\sset{1,\ell} \leq q-2$}, if $\ell$ is even. In other words, we
may assume that the variables along the path $\P$ appear in $\t^\mu$ raised to powers not greater than $q-2$.
Then, evaluating the congruences of Proposition~\ref{prop: congruences} at the vertices $2,\dots,\ell$ we get, if $\ell$ is odd, 
$$
\ts \mu\sset{i-1,i}+\mu\sset{i,i+1} \equiv q-2,\;\forall_{i\in \set{2,\dots,\ell}}
$$
or, if $\ell$ is even, 
$$
\ts \mu\sset{i-1,i}+\mu\sset{i,i+1} \equiv q-2,\;\forall_{i\in \set{2,\dots,\ell-1}}, 
\quad \text{and}\quad  \mu\sset{\ell-1,\ell}+\mu\sset{\ell,1} \equiv q-2.
$$
where all congruences are modulo $q-1$. We deduce that there exist 
$$
a,b\in \set{0,\dots,q-2},
$$ 
with $a+b\equiv q-2$ such that, if $\ell$ is odd,
$$
\begin{cases}
\mu\sset{1,2} = \mu\sset{3,4} = \cdots = \mu\sset{\ell,\ell+1}  = a\\
\mu\sset{2,3} = \mu\sset{4,5} = \cdots = \mu\sset{\ell-1,\ell} = b
\end{cases}
$$
or, if $\ell$ is even,
$$
\begin{cases}
\mu\sset{1,2} = \mu\sset{3,4} = \cdots = \mu\sset{\ell-1,\ell}  = a\\
\mu\sset{2,3} = \mu\sset{4,5} = \cdots = \mu\sset{\ell,1} = b.
\end{cases}
$$
\smallskip

\noindent
Let $\t^\delta \in K[E_{G^\flat}]$ be the monomial
supported on ${G^\flat}$, given by 
$$
\renewcommand{\arraystretch}{1.5}
\left \{
\begin{array}{l}
\t^\mu = (t_{12}t_{34}\cdots t_{\ell(\ell+1)})^{a} (t_{23}t_{45}\cdots t_{(\ell-1)\ell })^{b}\,  \t^\delta,\quad \text{if $\ell$ is odd, or}\\
\t^\mu = (t_{12}t_{34}\cdots t_{(\ell-1)\ell})^{a} (t_{23}t_{45}\cdots t_{\ell 1 })^{b}\,  \t^\delta,\quad \text{if $\ell$ is even.}\\
\end{array} 
\right.
$$
In view of \eqref{eq: L654}, we deduce that there exists 
$\t^{\beta}\in K[E_{{G^\flat}}]$ such that 
$$
\beta\sset{k,l}\geq \mu\sset{k,l}>0
$$ 
and
\begin{equation}\label{eq:229}
\renewcommand{\arraystretch}{1.5}
\left \{
\begin{array}{l}
\t^\mu - (t_{23}t_{45}\cdots t_{(\ell-1)\ell})^{q-2}\,\t^\beta \in I_q(G),\quad \text{if $\ell$ is odd, or}\\
\t^\mu - (t_{23}t_{45}\cdots t_{\ell 1})^{q-2}\,\t^\beta \in I_q(G),\quad \text{if $\ell$ is even.}\\
\end{array} 
\right.
\end{equation}
Note that if $a+b>q-2$ then, using $t_{ij}^{q-1}-t_{kl}^{q-1}\in I_q(G)$, the powers 
of the variables $t_{23}$, $t_{45},\dots,t_{(\ell-1)\ell}$ can be reduced to $q-2$.
Combining (\ref{eq:204}) and (\ref{eq:229}) we obtain 
$$
\renewcommand{\arraystretch}{1.5}
\left \{
\begin{array}{l}
\t^\alpha (t_{23}t_{45}\cdots t_{(\ell-1)\ell})^{q-2} - (t_{23}t_{45}\cdots t_{(\ell-1)\ell})^{q-2}\,\t^\beta \in I_q(G),\quad \text{if $\ell$ is odd, or}\\
\t^\alpha (t_{23}t_{45}\cdots t_{\ell 1})^{q-2} - (t_{23}t_{45}\cdots t_{\ell 1})^{q-2}\,\t^\beta \in I_q(G),\quad \text{if $\ell$ is even.}\\
\end{array} 
\right.
$$
Since any product of variables is regular in $K[E_G]/I_q(G)$, we get
$$
\t^\alpha  - \t^\beta \in I_q(G),
$$
where, recall $\beta\sset{k,l}>0$. But this binomial, being supported on ${G^\flat}$, also belongs to $I_q({G^\flat})$. 
This contradicts the assumptions on $\t^\alpha$.
Therefore, by Proposition~\ref{prop: computing reg by reducing to Artinian quotient},
$$
\ts\reg G \geq \deg(\t^\nu) = \reg({G^\flat})+\bigl \lfloor \frac{\ell}{2}\bigr \rfloor (q-2).
$$

To prove the opposite inequality we will use Proposition~\ref{prop: reg bound from graph dec}.
If $\ell$ is odd (see Figure~\ref{fig: adding a path to an edge}a), consi\-de\-r the decomposition of $G$ given by  
$\P\cup \set{{1,\ell +1}}$ and $G^\flat$. Then,
$$
\textstyle \reg G \leq \reg G^\flat + \reg(\P\cup \set{1,\ell+1})  =   \reg G^\flat + \bigl \lfloor \frac{\ell}{2}\bigr \rfloor (q-2).
$$
If $\ell$ is even (see Figure~\ref{fig: adding a path to an edge}b), we consider the decomposition of $G$ 
into the subgraph $H=G^\flat \cup \set{1,2}$ and the cycle $\P$. 
Using Propositions~\ref{prop: reg bound from graph dec} and \ref{prop: additivity of reg on leaves}, we get 
$$
\textstyle \reg G \leq \reg {G^\flat} + (q-2) + \reg \P  =   \reg {G^\flat} +  \frac{\ell}{2} (q-2).\qedhere
$$
\end{proof}

\begin{definition}
Let $G$ be a bipartite graph and $\I\subset G$ an open ear of $G$. 
A bipartite ear modification of $G$ along $\I$ 
is the \emph{simple} graph obtained by either:
(i) replacing $\I$ by another open ear $\P$, 
with the same end-vertices and length of the same parity as $\ell(\I)$, or
(ii), if $\ell(\I)$ is even, by identifying the end-vertices of $\I$ in $G-\I^\circ$. 
We say that $G$ satisfies the bipartite ear modification 
hypothesis on $\I$ if, whenever $G'$ is a bipartite ear modification of $G$ along $\I$, we have
\begin{equation}\label{eq: the hypothesis}
\ts \reg G' = \reg G + \frac{|V_{G'}|-|V_G|}{2} (q-2).
\end{equation}
\end{definition}
\smallskip

Notice that, since $G$ is assumed to be bipartite, if $\ell(\I)$ is even,
then its end-vertices are not adjacent and in the bipartite ear modification described in (ii), no loop is created. 
However, in both cases, to obtain a simple graph it may be necessary to remove the multiple edges created.
\smallskip

It is easy to see that an even cycle satisfies the bipartite ear modification assumption on any of its open ears. 
Given that the regularity of a tree on $n$ vertices is $(n-2)(q-2)$, it is clear that trees do not.

\begin{theorem}\label{thm: adding a path to the endpoints of path with degree 2 inner vertices}
Let $G$ be a bipartite graph and $\I$ and $\P$ be two open ears of $G$ sharing the same end-vertices.
Let $G^\flat$ denote the graph $G-\P^\circ$, if $\ell(\P)>1$, or $G\setminus E_\P$, if
$\ell(\P)=1$. Assume that $G^\flat$ 
satisfies the bipartite ear modification hypothesis on $\I$. Then, 
$$ \textstyle \reg G = \reg G^\flat + \bigl \lfloor \frac{\ell(\P)}{2}\bigr \rfloor (q-2).$$
\end{theorem}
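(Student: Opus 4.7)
Since $G$ is bipartite and $\I\cup\P$ is a cycle, its length $\ell(\I)+\ell(\P)$ is even, so $\ell(\I)$ and $\ell(\P)$ have the same parity. The plan is to reduce to Proposition~\ref{prop: adding a path to the endpoints of an edge} by invoking the bipartite ear modification hypothesis on $G^\flat$ to shorten $\I$ until the end-vertices of $\P$ become adjacent or coincident.

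If $\ell(\P)=1$, the edge $\P$ makes the end-vertices of $\I$ adjacent in $G$, and Proposition~\ref{prop: adding a path to the endpoints of an edge}(i) applied to $G$ with $\I$ as the added ear gives $\reg G = \reg(G-\I^\circ)+\frac{\ell(\I)-1}{2}(q-2)$. Since $G-\I^\circ$ coincides with the bipartite ear modification of $G^\flat$ that replaces $\I$ by a length-$1$ edge, the hypothesis on $G^\flat$ gives $\reg(G-\I^\circ) = \reg G^\flat - \frac{\ell(\I)-1}{2}(q-2)$, and adding these two equalities proves the theorem in this case.

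For $\ell(\P)\geq 2$, let $G^\flat_0$ denote the bipartite ear modification of $G^\flat$ in which $\I$ is shortened to a length-$1$ ear (odd parity) or its end-vertices are identified (even parity), and set $G_0 := G^\flat_0\cup\P$. The hypothesis yields $\reg G^\flat_0 = \reg G^\flat + \tfrac{|V_{G^\flat_0}|-|V_{G^\flat}|}{2}(q-2)$. In $G_0$ the end-vertices of $\P$ are adjacent (odd parity) or coincide (even parity), so Proposition~\ref{prop: adding a path to the endpoints of an edge} applies and yields $\reg G_0 = \reg G^\flat_0 + \lfloor\ell(\P)/2\rfloor(q-2)$; the degenerate subcase $\ell(\P)=2$ in the identification setting, where $\P$ collapses to a pendant edge once the induced multi-edge is simplified, is handled instead by Proposition~\ref{prop: additivity of reg on leaves}, with the same numerical outcome.

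It remains to show $\reg G - \reg G_0 = \tfrac{|V_G|-|V_{G_0}|}{2}(q-2)$, after which the identity $|V_G|-|V_{G_0}|=|V_{G^\flat}|-|V_{G^\flat_0}|$ combines the three displayed equations into the claimed formula. This comparison is the main obstacle. I would establish it by induction on $\ell(\I)$: shortening $\I$ inside $G$ by two inner vertices produces a graph $G_1$ whose associated $G^\flat_1$ inherits the bipartite ear modification hypothesis along the shortened $\I$, since modifications of $G^\flat_1$ are in canonical bijection with modifications of $G^\flat$. The inductive core is then the identity $\reg G - \reg G_1 = (q-2)$, which I would verify directly through the Artinian-quotient technique of Proposition~\ref{prop: computing reg by reducing to Artinian quotient}, adapting the argument of Proposition~\ref{prop: adding a path to the endpoints of an edge} by exploiting the binomial in $I_q(G)$ arising from the even cycle $\I\cup\P$ to redistribute exponents between the removed and remaining portions of $\I$.
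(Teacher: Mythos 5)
Your reduction of the easy cases is sound and matches the paper's own treatment: the case $\ell(\P)=1$ via Proposition~\ref{prop: adding a path to the endpoints of an edge} applied to $\I$ together with the modification hypothesis, the case where the end-vertices of $\P$ are already adjacent or identified, and the bookkeeping that chains the three displayed identities together. The observation that the hypothesis transfers to $G^\flat_1$ because ear modifications compose is also correct. The problem is that everything you have actually proved reduces the theorem to the single claim $\reg G = \reg G_1 + (q-2)$, and that claim \emph{is} the theorem — it is precisely the statement that attaching/shortening an ear across a non-adjacent pair of end-vertices changes the regularity by $(q-2)$ per two inner vertices, which is what had to be shown in the first place. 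One inequality is cheap: identifying the two end-vertices of the removed length-two segment and invoking Propositions~\ref{prop: vertex identification} and \ref{prop: additivity of reg on leaves} gives $\reg G \geq \reg G_1 + (q-2)$. But the upper bound $\reg G \leq \reg G_1 + (q-2)$ does not follow from any of the quoted general tools: the decomposition bound of Proposition~\ref{prop: reg bound from graph dec} applied to $G = (G-\I^\circ)\cup(\P\cup\I)$ overshoots by roughly $\lceil \ell(\P)/2\rceil(q-2)$, and Proposition~\ref{prop: adding a path to the endpoints of an edge} cannot be ``adapted'' verbatim because its proof hinges on the end-vertices of the ear being adjacent or coincident, which is exactly what forces the congruences of Proposition~\ref{prop: congruences} along the ear to close up into the rigid two-parameter pattern $(a,b)$ with $a+b\equiv q-2$; when the end-vertices are joined only through the long ear $\I$, that chain of congruences is open and no such rigidity is available.

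The sentence ``I would verify directly through the Artinian-quotient technique \dots\ by exploiting the binomial arising from the even cycle $\I\cup\P$ to redistribute exponents'' is a statement of intent, not an argument, and it is precisely where the paper spends almost all of its effort. In the odd-parity case the paper's upper bound requires: a case split on whether $\deg(\t^\alpha)\geq\lfloor\ell(\P)/2\rfloor(q-1)$ or $\deg(\t^\beta)\geq\lfloor\ell(\I)/2\rfloor(q-1)$ (each disposed of inside a proper subgraph via Proposition~\ref{prop: computing reg by reducing to Artinian quotient}); separate treatments when $\t^{\delta_1}$ or $\t^{\epsilon_1}$ divides $\t^\alpha$; a permutation argument to assume $t_{12},t_{23}\nmid\t^\nu$; a contraction of two edges of $\P$ (note: of $\P$, not of $\I$ as you propose) producing an auxiliary graph $H$ to which induction applies; and a delicate congruence computation at the three affected vertices to lift a solution from $H$ back to $G$. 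In the even-parity case the paper abandons induction on length altogether for the upper bound and instead uses an averaging argument over the three subgraphs $(G^\flat)^*$, $\C=\P\cup\I$ and $G-\I^\circ$. None of this is routine, and nothing in your proposal indicates how the redistribution of exponents would be carried out when the congruence chain along $\P$ does not close. As it stands the proposal establishes the theorem only modulo its own hardest half.
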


\begin{proof}
Note that, since $G$ is bipartite, the lengths of  $\I$ and $\P$ have the same parity.
We may assume, without loss of generality, that $\P$ is the path $(1,\dots,\ell_1+1)$,
where $\ell_1= \ell(\P)$ and $\I$ is the path $(\ell_1+1,\dots,\ell_2,1)$, where $\ell_2=\ell(\P)+\ell(\I)$, as 
illustrated in Figure~\ref{fig: adding general setup}.
\begin{figure}[ht]
\begin{center}
\begin{tikzpicture}[line cap=round,line join=round, scale=2]

\draw[fill=black] (-1.9,-.12) circle (0.2pt);
\draw[fill=black] (-1.85,-.1) circle (0.2pt);
\draw[fill=black] (-1.8,-.085) circle (0.2pt);
\draw[fill=black] (.3,-.085) circle (0.2pt);
\draw[fill=black] (0.35,-.1) circle (0.2pt);
\draw[fill=black] (0.4,-.12) circle (0.2pt);

\draw [fill=black](-1.5,0) circle (0.75pt);
\draw (-1.6,.1) node {$\scriptstyle 1$};
\draw (-1.5,0) -- (-1.6,-.2);
\draw [fill=black](0,0) circle (0.75pt);
\draw (0.25,.1) node {$\scriptstyle \ell_1+1$};
\draw (0,0) -- (.1,-.2);
\draw (0,0) -- (-.1,-.2);


\draw (-1.5,0) .. controls (-1.75,1) and (.25,1) .. (0,0);
\draw [white, fill=white](-.75,.745) circle (4pt);
\draw [fill=black](-.83,.745) circle (.2pt);
\draw [fill=black](-.75,.745) circle (.2pt);
\draw [fill=black](-.67,.745) circle (.2pt);

\draw [fill=black](-1.4,.5) circle (0.75pt);
\draw (-1.5,.55) node {$\scriptstyle 2$};
\draw [fill=black](-1,.72) circle (0.75pt);
\draw (-1.05,.82) node {$\scriptstyle 3$};

\draw [fill=black](-.5,.72) circle (0.75pt);

\draw [fill=black](-0.1,.5) circle (0.75pt);
\draw (0.1,.55) node {$\scriptstyle \ell_1$};

\draw (-.2,.75) node {$\P$};

\draw (-.35,.2) node {$\I$};


\draw (-1.75,-.07) .. controls (-1.25,.1) and (-.25,.1) .. (.25,-.07);
\draw [white, fill=white](-.75,.06) circle (4pt);
\draw [fill=black](-.83,.06) circle (.2pt);
\draw [fill=black](-.75,.06) circle (.2pt);
\draw [fill=black](-.67,.06) circle (.2pt);
\draw [fill=black](-1.25,.028) circle (0.75pt);
\draw (-1.25,-.15) node {$\scriptstyle \ell_2$};
\draw [fill=black](-1,.048) circle (0.75pt);

\draw [fill=black](-.5,.048) circle (0.75pt);
\draw [fill=black](-.25,.028) circle (0.75pt);

\end{tikzpicture}
\end{center}
\caption{}
\label{fig: adding general setup} 
\end{figure}

If the vertices $1$ and $\ell_1+1$ are neighbors in ${G^\flat}$ then, by Proposition~\ref{prop: adding a path to the endpoints of an edge}, 
the result holds.
Assume $\ell(\P)=1$ and $\ell(\I)>1$. Then the graph $G-\I^\circ$ is isomorphic to a bipartite ear modification of ${G^\flat}$ and, accordingly, 
$$
\ts \reg (G-\I^\circ) = \reg G^\flat - \lfloor \frac{\ell(\I)}{2}\bigr \rfloor (q-2).
$$
On the other hand, using again Proposition~\ref{prop: adding a path to the endpoints of an edge} we get
$$
\ts \reg G = \reg (G-\I^\circ) + \lfloor \frac{\ell(\I)}{2}\bigr \rfloor (q-2)=\reg G^\flat.
$$
Thus, from now on, we may assume that the vertices $1$ and $\ell_1+1$ are not neighbors in ${G^\flat}$ and $\ell(\I),\ell(\P)>1$.
We will split the proof into two cases according to the parity of $\ell(\I)$. 
\smallskip

We start by assuming that 
$\ell(\I)$ is odd. 
Consider the graph $G^\sharp$ obtained
by adding the edge $E=\set{1,\ell_1+1}$ to $G$. 
Then, as $G$ is a spanning subgraph of $G^\sharp$, we have $\reg G\geq \reg G^\sharp$. Denote the graph
$(G^\flat- \I^\circ)\cup E$ by $(G^\flat)'$. (See Figure~\ref{fig: the first time we use a bipartite modification}.) 
\begin{figure}[ht]
\begin{center}
\begin{tikzpicture}[line cap=round,line join=round, scale=2]

\draw[fill=black] (-1.9,-.12) circle (0.4pt);
\draw[fill=black] (-1.85,-.1) circle (0.4pt);
\draw[fill=black] (-1.8,-.085) circle (0.4pt);
\draw[fill=black] (.3,-.085) circle (0.4pt);
\draw[fill=black] (0.35,-.1) circle (0.4pt);
\draw[fill=black] (0.4,-.12) circle (0.4pt);

\draw (-1.6,.1) node {$\scriptstyle 1$};
\draw (-1.5,.55) node {$\scriptstyle 2$};

\draw [fill=black](-1.5,0) circle (.75pt);
\draw [line width = 1pt](-1.5,0) -- (-1.6,-.2);
\draw [fill=black](0,0) circle (.75pt);
\draw [line width = 1pt](0,0) -- (.1,-.2);
\draw [line width = 1pt](0,0) -- (-.1,-.2);


\draw [gray](-1.5,0) .. controls (-1.75,1) and (.25,1) .. (0,0);
\draw [white, fill=white](-.75,.745) circle (4pt);
\draw [gray,fill=gray](-.83,.745) circle (.2pt);
\draw [gray,fill=gray](-.75,.745) circle (.2pt);
\draw [gray,fill=gray](-.67,.745) circle (.2pt);

\draw [gray,fill=gray](-1.4,.5) circle (.75pt);
\draw [gray,fill=gray](-1,.72) circle (.75pt);
\draw [gray,fill=gray](-.5,.72) circle (.75pt);
\draw [gray,fill=gray](-0.1,.5) circle (.75pt);


\draw [line width = 1pt](-1.75,-.07) .. controls (-1.25,.1) and (-.25,.1) .. (.25,-.07);
\draw [line width = 2pt, white](-1.5,0) .. controls (-1.1,.07) and (-.4,.07) .. (0,0);
\draw [gray](-1.5,0) .. controls (-1.1,.07) and (-.4,.07) .. (0,0);

\draw [line width = 1pt](-1.5,0) .. controls (-1.1,-.4) and (-.4,-.4) .. (0,0);

\draw [fill=black](-1.5,0) circle (.75pt);
\draw [fill=black](0,0) circle (.75pt);

\draw [white, fill=white](-.75,.06) circle (4pt);
\draw [gray,fill=gray](-.83,.06) circle (.2pt);
\draw [gray,fill=gray](-.75,.06) circle (.2pt);
\draw [gray,fill=gray](-.67,.06) circle (.2pt);
\draw [gray,fill=gray](-1.25,.028) circle (.75pt);
\draw [gray,fill=gray](-1,.048) circle (.75pt);
\draw [gray,fill=gray](-.5,.048) circle (.75pt);
\draw [gray,fill=gray](-.25,.028) circle (.75pt);

\draw (-1.5,.55) node {$\scriptstyle 2$};
\draw (-1.05,.82) node {$\scriptstyle 3$};
\draw (0.1,.55) node {$\scriptstyle \ell_1$};
\draw (-1.6,.1) node {$\scriptstyle 1$};
\draw (0.3,.1) node {$\scriptstyle \ell_1+1$};
\draw (-1.25,.17) node {$\scriptstyle \ell_2$};

\end{tikzpicture}
\end{center}
\caption{The graph $(G^\flat)'$.}
\label{fig: the first time we use a bipartite modification} 
\end{figure}

\noindent 
Since $(G^\flat)'$  
is a bipartite ear modification of $G^\flat$ along $\I$,   
$$\ts\reg (G^\flat)' = \reg G^\flat - \bigl  \lfloor \frac{\ell(\I)}{2}\bigr \rfloor (q-2).$$
On the other hand, using Proposition~\ref{prop: adding a path to the endpoints of an edge},
$$
\ts \reg G^\sharp = \reg (G^\flat)' + \big (\bigl  \lfloor \frac{\ell(\P)}{2}\bigr \rfloor + \lfloor \frac{\ell(\I)}{2}\bigr \rfloor\big )(q-2)
$$
and therefore,
$$
\ts \reg G \geq \reg G^\sharp = \reg G^\flat + \bigl  \lfloor \frac{\ell(\P)}{2}\bigr \rfloor (q-2).
$$
\smallskip

\noindent
Let us now prove the opposite inequality.
We will use induction on $\frac{\ell(\P)+\ell(\I)}{2}$. 
Consider the following monomials in $K[E_{G}]$:
\begin{equation}\label{eq:1200}
\renewcommand{\arraystretch}{1.4}
\begin{array}{l}
\t^{\delta_1} = t_{23}t_{45}\cdots t_{(\ell_1-1)\ell_1},  \\
\t^{\epsilon_1} = t_{12}t_{34}\cdots t_{\ell_1(\ell_1+1)},\\ 
\t^{\delta_2}=t_{(\ell_1+2)(\ell_1+3)}\cdots t_{(\ell_2-1)\ell_2},\\
\t^{\epsilon_2}=t_{(\ell_1+1)(\ell_1+2)}\cdots t_{\ell_21}.
\end{array}
\end{equation}
The monomial $\t^{\delta_1}$ is the monomial given by the multiplication of the variables associated to every other
edge of $\P$ starting from the second. 
The monomial $\t^{\epsilon_1}$ is the monomial given 
by the multiplication of the other edges of $\P$. The monomials $\t^{\delta_2}$ and $\t^{\epsilon_2}$ are 
described similarly with respect to $\I$. (See Figure~\ref{fig: edges in special monomials}.)
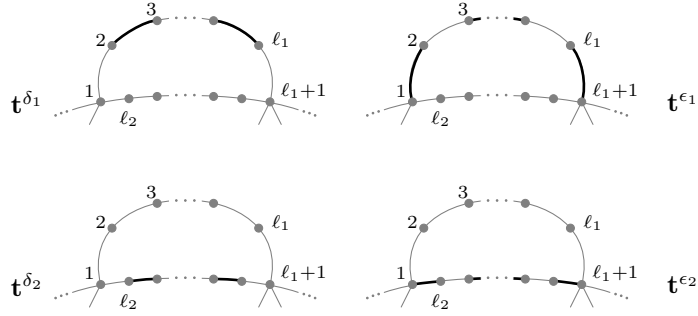
\begin{figure}[ht]
\begin{center}
\begin{tikzpicture}[line cap=round,line join=round, scale=1.5]

\draw[gray, fill=gray] (-1.9,-.12) circle (0.2pt);
\draw[gray, fill=gray] (-1.85,-.1) circle (0.2pt);
\draw[gray, fill=gray] (-1.8,-.085) circle (0.2pt);
\draw[gray, fill=gray] (.3,-.085) circle (0.2pt);
\draw[gray, fill=gray] (0.35,-.1) circle (0.2pt);
\draw[gray, fill=gray] (0.4,-.12) circle (0.2pt);

\draw(-2.15,0) node {$\t^{\delta_1}$};

\draw [gray](-1.5,0) -- (-1.6,-.2);
\draw [gray](0,0) -- (.1,-.2);
\draw [gray](0,0) -- (-.1,-.2);
\draw [gray](-1.5,0) .. controls (-1.75,1) and (.25,1) .. (0,0);
\draw [white, fill=white](-.75,.745) circle (4pt);
\draw [gray, fill=gray](-.83,.745) circle (.2pt);
\draw [gray, fill=gray](-.75,.745) circle (.2pt);
\draw [gray, fill=gray](-.67,.745) circle (.2pt);
\draw [gray] (-1.75,-.07) .. controls (-1.25,.1) and (-.25,.1) .. (.25,-.07);
\draw [white, fill=white](-.75,.06) circle (4pt);
\draw [gray, fill=gray](-.83,.06) circle (.2pt);
\draw [gray, fill=gray](-.75,.06) circle (.2pt);
\draw [gray, fill=gray](-.67,.06) circle (.2pt);


\draw [line width = 1pt] (-1.4,.5)..controls (-1.35,.57) and (-1.1,.73).. (-1,.72);
\draw [line width = 1pt] (-.5,.72) ..controls (-.4,.72) and (-.2,.63) .. (-0.1,.5);

\draw [gray,fill=gray](-1.4,.5) circle (1pt);
\draw (-1.5,.55) node {$\scriptstyle 2$};
\draw [gray, fill=gray](-1,.72) circle (1pt);
\draw (-1.05,.82) node {$\scriptstyle 3$};
\draw [gray, fill=gray](-.5,.72) circle (1pt);
\draw [gray, fill=gray](-0.1,.5) circle (1pt);
\draw (0.1,.55) node {$\scriptstyle \ell_1$};
\draw [gray, fill=gray](-1.5,0) circle (1pt);
\draw (-1.6,.1) node {$\scriptstyle 1$};
\draw [gray, fill=gray](0,0) circle (1pt);
\draw (0.3,.1) node {$\scriptstyle \ell_1+1$};
\draw [gray, fill=gray](-1.25,.028) circle (1pt);
\draw (-1.25,-.15) node {$\scriptstyle \ell_2$};
\draw [gray, fill=gray](-1,.048) circle (1pt);
\draw [gray, fill=gray](-.5,.048) circle (1pt);
\draw [gray, fill=gray](-.25,.028) circle (1pt);

\draw[white] (.5,-.5) circle (2.5pt);
\end{tikzpicture}
\begin{tikzpicture}[line cap=round,line join=round, scale=1.5]

\draw(.9,0) node {$\t^{\epsilon_1}$};

\draw[gray, fill=gray] (-1.9,-.12) circle (0.2pt);
\draw[gray, fill=gray] (-1.85,-.1) circle (0.2pt);
\draw[gray, fill=gray] (-1.8,-.085) circle (0.2pt);
\draw[gray, fill=gray] (.3,-.085) circle (0.2pt);
\draw[gray, fill=gray] (0.35,-.1) circle (0.2pt);
\draw[gray, fill=gray] (0.4,-.12) circle (0.2pt);

\draw [gray](-1.5,0) -- (-1.6,-.2);
\draw [gray](0,0) -- (.1,-.2);
\draw [gray](0,0) -- (-.1,-.2);
\draw [gray](-1.5,0) .. controls (-1.75,1) and (.25,1) .. (0,0);
\draw [white, fill=white](-.75,.745) circle (4pt);
\draw [gray, fill=gray](-.83,.745) circle (.2pt);
\draw [gray, fill=gray](-.75,.745) circle (.2pt);
\draw [gray, fill=gray](-.67,.745) circle (.2pt);
\draw [gray] (-1.75,-.07) .. controls (-1.25,.1) and (-.25,.1) .. (.25,-.07);
\draw [white, fill=white](-.75,.06) circle (4pt);
\draw [gray, fill=gray](-.83,.06) circle (.2pt);
\draw [gray, fill=gray](-.75,.06) circle (.2pt);
\draw [gray, fill=gray](-.67,.06) circle (.2pt);


\draw [line width = 1pt] (-1.5,0) ..controls (-1.55,.15) and (-1.5,.35) ..(-1.4,.5);
\draw [line width = 1pt] (-0.1,.5) .. controls (0,.4) and (.05,.15) .. (0,0);
\draw [line width = 1pt] (-1,.72) .. controls (-.8,.76) and (-.7,.76) .. (-.5,.72);


\draw [white, fill=white](-.75,.745) circle (4pt);
\draw [gray, fill=gray](-.83,.745) circle (.2pt);
\draw [gray, fill=gray](-.75,.745) circle (.2pt);
\draw [gray, fill=gray](-.67,.745) circle (.2pt);

\draw [gray,fill=gray](-1.4,.5) circle (1pt);
\draw (-1.5,.55) node {$\scriptstyle 2$};
\draw [gray, fill=gray](-1,.72) circle (1pt);
\draw (-1.05,.82) node {$\scriptstyle 3$};
\draw [gray, fill=gray](-.5,.72) circle (1pt);
\draw [gray, fill=gray](-0.1,.5) circle (1pt);
\draw (0.1,.55) node {$\scriptstyle \ell_1$};
\draw [gray, fill=gray](-1.5,0) circle (1pt);
\draw (-1.6,.1) node {$\scriptstyle 1$};
\draw [gray, fill=gray](0,0) circle (1pt);
\draw (0.3,.1) node {$\scriptstyle \ell_1+1$};
\draw [gray, fill=gray](-1.25,.028) circle (1pt);
\draw (-1.25,-.15) node {$\scriptstyle \ell_2$};
\draw [gray, fill=gray](-1,.048) circle (1pt);
\draw [gray, fill=gray](-.5,.048) circle (1pt);
\draw [gray, fill=gray](-.25,.028) circle (1pt);

\draw[white] (-2,-.5) circle (2.5pt);

\end{tikzpicture}

\begin{tikzpicture}[line cap=round,line join=round, scale=1.5]

\draw(-2.15,0) node {$\t^{\delta_2}$};

\draw[gray, fill=gray] (-1.9,-.12) circle (0.2pt);
\draw[gray, fill=gray] (-1.85,-.1) circle (0.2pt);
\draw[gray, fill=gray] (-1.8,-.085) circle (0.2pt);
\draw[gray, fill=gray] (.3,-.085) circle (0.2pt);
\draw[gray, fill=gray] (0.35,-.1) circle (0.2pt);
\draw[gray, fill=gray] (0.4,-.12) circle (0.2pt);

\draw [gray](-1.5,0) -- (-1.6,-.2);
\draw [gray](0,0) -- (.1,-.2);
\draw [gray](0,0) -- (-.1,-.2);
\draw [gray](-1.5,0) .. controls (-1.75,1) and (.25,1) .. (0,0);
\draw [white, fill=white](-.75,.745) circle (4pt);
\draw [gray, fill=gray](-.83,.745) circle (.2pt);
\draw [gray, fill=gray](-.75,.745) circle (.2pt);
\draw [gray, fill=gray](-.67,.745) circle (.2pt);
\draw [gray] (-1.75,-.07) .. controls (-1.25,.1) and (-.25,.1) .. (.25,-.07);
\draw [white, fill=white](-.75,.06) circle (4pt);
\draw [gray, fill=gray](-.83,.06) circle (.2pt);
\draw [gray, fill=gray](-.75,.06) circle (.2pt);
\draw [gray, fill=gray](-.67,.06) circle (.2pt);


\draw [line width = 1pt] (-1.25,.028)..controls (-1.15,.04) and (-1.1,.05).. (-1,.048);
\draw [line width = 1pt] (-.5,.048) ..controls (-.4,.05) and (-.25,.04) .. (-0.25,.028);

\draw [gray,fill=gray](-1.4,.5) circle (1pt);
\draw (-1.5,.55) node {$\scriptstyle 2$};
\draw [gray, fill=gray](-1,.72) circle (1pt);
\draw (-1.05,.82) node {$\scriptstyle 3$};
\draw [gray, fill=gray](-.5,.72) circle (1pt);
\draw [gray, fill=gray](-0.1,.5) circle (1pt);
\draw (0.1,.55) node {$\scriptstyle \ell_1$};
\draw [gray, fill=gray](-1.5,0) circle (1pt);
\draw (-1.6,.1) node {$\scriptstyle 1$};
\draw [gray, fill=gray](0,0) circle (1pt);
\draw (0.3,.1) node {$\scriptstyle \ell_1+1$};
\draw [gray, fill=gray](-1.25,.028) circle (1pt);
\draw (-1.25,-.15) node {$\scriptstyle \ell_2$};
\draw [gray, fill=gray](-1,.048) circle (1pt);
\draw [gray, fill=gray](-.5,.048) circle (1pt);
\draw [gray, fill=gray](-.25,.028) circle (1pt);

\draw[white] (.5,-.5) circle (2.5pt);
\end{tikzpicture}
\begin{tikzpicture}[line cap=round,line join=round, scale=1.5]

\draw[gray, fill=gray] (-1.9,-.12) circle (0.2pt);
\draw[gray, fill=gray] (-1.85,-.1) circle (0.2pt);
\draw[gray, fill=gray] (-1.8,-.085) circle (0.2pt);
\draw[gray, fill=gray] (.3,-.085) circle (0.2pt);
\draw[gray, fill=gray] (0.35,-.1) circle (0.2pt);
\draw[gray, fill=gray] (0.4,-.12) circle (0.2pt);

\draw(.9,0) node {$\t^{\epsilon_2}$};

\draw [gray](-1.5,0) -- (-1.6,-.2);
\draw [gray](0,0) -- (.1,-.2);
\draw [gray](0,0) -- (-.1,-.2);
\draw [gray](-1.5,0) .. controls (-1.75,1) and (.25,1) .. (0,0);
\draw [white, fill=white](-.75,.745) circle (4pt);
\draw [gray, fill=gray](-.83,.745) circle (.2pt);
\draw [gray, fill=gray](-.75,.745) circle (.2pt);
\draw [gray, fill=gray](-.67,.745) circle (.2pt);
\draw [gray] (-1.75,-.07) .. controls (-1.25,.1) and (-.25,.1) .. (.25,-.07);
\draw [white, fill=white](-.75,.06) circle (4pt);
\draw [gray, fill=gray](-.83,.06) circle (.2pt);
\draw [gray, fill=gray](-.75,.06) circle (.2pt);
\draw [gray, fill=gray](-.67,.06) circle (.2pt);


\draw [line width = 1pt] (-1.5,0) ..controls (-1.4,.015) and (-1.35,.02) ..(-1.25,.028);
\draw [line width = 1pt] (-1,.048) .. controls (-.8,.06) and (-.7,.06) .. (-.5,.048);
\draw [line width = 1pt] (0,0) ..controls (-0.1,.015) and (-0.35,.05) ..(-0.25,.028);

\draw [white, fill=white](-.75,.06) circle (4pt);
\draw [gray, fill=gray](-.83,.06) circle (.2pt);
\draw [gray, fill=gray](-.75,.06) circle (.2pt);
\draw [gray, fill=gray](-.67,.06) circle (.2pt);

\draw [gray,fill=gray](-1.4,.5) circle (1pt);
\draw (-1.5,.55) node {$\scriptstyle 2$};
\draw [gray, fill=gray](-1,.72) circle (1pt);
\draw (-1.05,.82) node {$\scriptstyle 3$};
\draw [gray, fill=gray](-.5,.72) circle (1pt);
\draw [gray, fill=gray](-0.1,.5) circle (1pt);
\draw (0.1,.55) node {$\scriptstyle \ell_1$};
\draw [gray, fill=gray](-1.5,0) circle (1pt);
\draw (-1.6,.1) node {$\scriptstyle 1$};
\draw [gray, fill=gray](0,0) circle (1pt);
\draw (0.3,.1) node {$\scriptstyle \ell_1+1$};
\draw [gray, fill=gray](-1.25,.028) circle (1pt);
\draw (-1.25,-.15) node {$\scriptstyle \ell_2$};
\draw [gray, fill=gray](-1,.048) circle (1pt);
\draw [gray, fill=gray](-.5,.048) circle (1pt);
\draw [gray, fill=gray](-.25,.028) circle (1pt);

\draw[white] (-2,-.5) circle (2.5pt);
\end{tikzpicture}

\end{center}
\vspace{-.5cm}
\caption{Edges in the support of $\t^{\delta_1}$, $\t^{\epsilon_1}$, $\t^{\delta_2}$ and $\t^{\epsilon_2}$.}
\label{fig: edges in special monomials} 
\end{figure}

\noindent 
Notice that 
$$\renewcommand{\arraystretch}{1.75}
\begin{array}{cc}
\deg(\t^{\delta_1}) = \bigl \lfloor \frac{\ell(\P)}{2} \bigr\rfloor,  &
\deg(\t^{\epsilon_1}) = \bigl\lfloor \frac{\ell(\P)}{2} \bigr \rfloor +1,  \\
\deg(\t^{\delta_2}) = \bigl\lfloor \frac{\ell(\I)}{2} \bigr\rfloor,  &
\deg(\t^{\epsilon_2}) = \bigl\lfloor \frac{\ell(\I)}{2} \bigr \rfloor +1. \\
\end{array}
$$
Also, since $\t^{\epsilon_1}\t^{\delta_2} - \t^{\delta_1}\t^{\epsilon_2}$
is a generator of the toric ideal of the even cycle $\P\cup \I$,
\begin{equation}\label{eq:643}
\t^{\epsilon_1}\t^{\delta_2} - \t^{\delta_1}\t^{\epsilon_2} \in I_q(\P\cup \I)\subset I_q(G).
\end{equation}
By Proposition~\ref{prop: computing reg by reducing to Artinian quotient}, to show that 
\begin{equation}\label{eq:L888}
\ts \reg G \leq  \reg G^\flat + \bigl  \lfloor \frac{\ell(\P)}{2}\bigr \rfloor (q-2)
\end{equation}
it suffices to prove that for any monomial $\t^\nu\in K[E_{G}]$ of degree 
$$
\ts
  \reg G^\flat +\bigl \lfloor \frac{\ell(\P)}{2} \bigr\rfloor (q-2) +  
 \bigl \lfloor \frac{\ell(\P)}{2}\bigr \rfloor +  
 \bigl \lfloor \frac{\ell(\I)}{2}\bigr \rfloor
$$
there exists $\t^\mu \in K[E_{G}]$, of the same degree as $\t^\nu$, divisible by $\t^{\delta_1} \t^{\delta_2}$, 
such that \mbox{$\t^\nu - \t^\mu$} belongs to $I_q(G)$.
Set 
\mbox{$\t^\nu = \t^{\alpha}\t^{\beta}\t^{\gamma}$} for some $\alpha,\beta,\gamma \in \NN^{E_{G}}$ satisfying  
\mbox{$\t^\alpha \in K[E_{\P}]$,} $\t^\beta \in K[E_{\I}]$, \mbox{$\t^\gamma \in K[E_{G^\flat-\I^\circ}]$}. Then,
\begin{equation}\label{eq:L805}
\textstyle \deg(\t^\alpha) + \deg (\t^\beta) + \deg(\t^\gamma) = \reg G^\flat + \bigl \lfloor \frac{\ell(\P)}{2}\bigr \rfloor (q-1)  
+  \bigl \lfloor \frac{\ell(\I)}{2}\bigr \rfloor.
\end{equation}

\noindent
Suppose that 
\begin{equation}\label{eq:617}
\textstyle \deg(\t^\alpha) \geq \bigl \lfloor \frac{\ell(\P)}{2}\bigr \rfloor (q-1) \quad 
\text{and}\quad  \deg(\t^\beta) \geq \bigl \lfloor \frac{\ell(\I)}{2}\bigr \rfloor (q-1).
\end{equation}
Then, $\t^\alpha\t^\beta$, which is supported on the cycle $\P\cup \I$, is such that 
$$
\ts \deg (\t^\alpha\t^\beta) \geq \reg (\P\cup \I) + \lfloor \frac{\ell(\P)}{2}\bigr \rfloor + \lfloor \frac{\ell(\I)}{2}\bigr \rfloor 
$$
and then, by Proposition~\ref{prop: computing reg by reducing to Artinian quotient}, applied to $\P\cup\I$,
there exists $\t^\mu\in K[E_{\P\cup \I}]$, divisible 
by $\t^{\delta_1}\t^{\delta_2}$ such that $\t^\alpha \t^\beta \in I_q(\P\cup \I)\subset I_q(G)$. We deduce that
$$
\textstyle \t^\alpha\t^\beta\t^\gamma - \t^\mu\t^\gamma \in I_q(G),
$$ 
as desired. Assume now that (\ref{eq:617}) does not hold. Now, directly from \eqref{eq:L805}, 
$$
\ts \deg(\t^\alpha)< \bigl\lfloor \frac{\ell(\P)}{2}\bigr \rfloor (q-1) \iff \deg (\t^\beta \t^\gamma) \geq \reg G^\flat + \bigl \lfloor \frac{\ell(\I)}{2}\bigr \rfloor +1.
$$
On the other hand, since $G- \I^\circ$ is a bipartite ear modification of $G^\flat$ along $\I$, and therefore by our 
assumptions, 
$$
\ts \reg (G-\I^\circ) = \reg G^\flat + \bigl\lfloor \frac{\ell(\P)}{2}\bigr \rfloor (q-2) - \bigl\lfloor \frac{\ell(\I)}{2}\bigr \rfloor (q-2),
$$
we get from \eqref{eq:L805}:
$$
\ts \deg(\t^\beta)< \bigl\lfloor \frac{\ell(\I)}{2}\bigr \rfloor (q-1) \iff \deg (\t^\alpha \t^\gamma) \geq \reg (G-\I^\circ) + \bigl \lfloor \frac{\ell(\P)}{2}\bigr \rfloor +1.
$$
Hence, by symmetry, we may assume that  
\begin{equation}\label{eq:634}
\ts \deg(\t^\alpha)< \bigl\lfloor \frac{\ell(\P)}{2}\bigr \rfloor (q-1) \iff \deg (\t^\beta \t^\gamma) \geq \reg G^\flat + \bigl \lfloor \frac{\ell(\I)}{2}\bigr \rfloor +1.
\end{equation}
Then, by Proposition~\ref{prop: computing reg by reducing to Artinian quotient}, 
there exists $\t^\mu \in K[E_{G^\flat}]$, of degree equal to $\deg(\t^\beta \t^\gamma)$, divisible by $\t^{\delta_2}$, 
such that 
$$
\t^\beta \t^\gamma -\t^\mu \in I_q(G^\flat)\subset I_q(G),
$$  
which implies that  
$$\t^\alpha \t^\beta \t^\gamma - \t^\alpha \t^\mu \in I_q(G).$$ 
If $\t^{\delta_1}$ divides $\t^\alpha$ we have finished. 
Assume $\t^{\delta_1}$ does not divide $\t^\alpha$.
If $\t^{\epsilon_1}$ divides $\t^\alpha$, then, since 
$$\t^\alpha\t^\mu  = (\t^\alpha\t^{-\epsilon_1})(\t^{\epsilon_1}\t^{\delta_2})(\t^{-\delta_2}\t^\mu )$$ 
and $\t^{\epsilon_1}\t^{\delta_2}-\t^{\delta_1}\t^{\epsilon_2}\in I_q(G)$ we get:
\begin{equation}\label{eq: L800}
\ts \t^\alpha\t^\beta \t^\gamma - (\t^\alpha\t^{-\epsilon_1}\t^{\delta_1} )(\t^{\epsilon_2} \t^{-\delta_2}\t^\mu ) \in I_q(G),
\end{equation}
where, 
$$
\ts \deg( \t^{\epsilon_2} \t^{-\delta_2}\t^\mu) = \deg(\t^\beta \t^\gamma) +1 \geq \reg G^\flat + \bigl \lfloor \frac{\ell(\I)}{2}\bigr \rfloor +2.
$$
Hence, by Proposition~\ref{prop: computing reg by reducing to Artinian quotient}, there exists 
$\t^\rho \in K[E_{G^\flat}]$ divisible by $\t^{\delta_2}$ such that 
\begin{equation}\label{eq: L809}
 \t^{\epsilon_2} \t^{-\delta_2}\t^\mu -\t^\rho \in I_q(G^\flat)\subset I_q(G).
\end{equation}
From \eqref{eq: L800} and \eqref{eq: L809}, we deduce that 
$$
\t^\alpha \t^\beta \t^\gamma - (\t^\alpha\t^{-\epsilon_1}\t^{\delta_1} )\t^\rho \in I_q(G)
$$ 
where 
$(\t^\alpha\t^{-\epsilon_1}\t^{\delta_1} )\t^\rho$ is divisible by $\t^{\delta_1}\t^{\delta_2}$, as required.
\smallskip

\noindent

\smallskip

\noindent
We may assume from now on that $\t^\alpha$ is divisible by neither $\t^{\epsilon_1}$ nor $\t^{\delta_1}$. 
Since showing that there exists a monomial $\t^\mu \in K[E_{G}]$ of degree equal to the degree of 
$\t^\nu = \t^\alpha \t^\beta \t^\gamma$, divisible by $\t^{\delta_1}\t^{\delta_2}$ such that $\t^\nu - \t^\mu \in I_q(G)$ 
is, by \cite[Lemma~2.1]{MaNeVPVi}, equivalent to showing that the same holds for the monomial
obtained by permuting the variables of the support of $\t^{\delta_1}$ and permuting the variables of the support of
$\t^{\epsilon_1}$, we may assume that
neither $t_{12}$ nor $t_{23}$ divides $\t^\nu$. 
\smallskip

\noindent
Consider the graph $H$ obtained 
from $G$ by removing the edges $\set{1,2}$ and $\set{2,3}$,
and identifying the vertices $1$ and $3$, as
illustrated in Figure~\ref{fig:Contracting the a 2-path on I}.  
\begin{figure}[ht]
\begin{center}
\begin{tikzpicture}[line cap=round,line join=round, scale=1.75]

\draw[black, fill=black] (-1.9,-.12) circle (0.2pt);
\draw[black, fill=black] (-1.85,-.1) circle (0.2pt);
\draw[black, fill=black] (-1.8,-.085) circle (0.2pt);
\draw[black, fill=black] (.3,-.085) circle (0.2pt);
\draw[black, fill=black] (0.35,-.1) circle (0.2pt);
\draw[black, fill=black] (0.4,-.12) circle (0.2pt);

\draw [fill=black](-1.5,0) circle (1pt);
\draw (-1.6,.1) node {$\scriptstyle 1$};
\draw (-1.5,0) -- (-1.6,-.2);
\draw [fill=black](0,0) circle (1pt);
\draw (0.3,.1) node {$\scriptstyle \ell_1+1$};
\draw (0,0) -- (.1,-.2);
\draw (0,0) -- (-.1,-.2);


\draw (-1.5,0) .. controls (-1.75,1) and (.25,1) .. (0,0);
\draw [white, fill=white](-.75,.745) circle (4pt);
\draw [fill=black](-.83,.745) circle (.2pt);
\draw [fill=black](-.75,.745) circle (.2pt);
\draw [fill=black](-.67,.745) circle (.2pt);

\draw [fill=black](-1.4,.5) circle (1pt);
\draw (-1.5,.55) node {$\scriptstyle 2$};
\draw [fill=black](-1,.72) circle (1pt);
\draw (-1.05,.82) node {$\scriptstyle 3$};

\draw [fill=black](-.5,.72) circle (1pt);

\draw [fill=black](-0.1,.5) circle (1pt);
\draw (0.1,.55) node {$\scriptstyle \ell_1$};


\draw (-1.75,-.07) .. controls (-1.25,.1) and (-.25,.1) .. (.25,-.07);
\draw [white, fill=white](-.75,.06) circle (4pt);
\draw [fill=black](-.83,.06) circle (.2pt);
\draw [fill=black](-.75,.06) circle (.2pt);
\draw [fill=black](-.67,.06) circle (.2pt);
\draw [fill=black](-1.25,.028) circle (1pt);
\draw (-1.25,-.15) node {$\scriptstyle \ell_2$};
\draw [fill=black](-1,.048) circle (1pt);

\draw [fill=black](-.5,.048) circle (1pt);
\draw [fill=black](-.25,.028) circle (1pt);

\draw(-2.25,0) node {$G$};

\draw [white] (1,-.4) circle (.5pt);
\draw [white] (-2.5,-.4) circle (.5pt);
\draw [white] (1,.85) circle (.5pt);
\draw [white] (-2.5,.85) circle (.5pt);

\end{tikzpicture}

\begin{tikzpicture}[line cap=round,line join=round, scale=1.5]
\BDarrow{0}
\end{tikzpicture}

\begin{tikzpicture}[line cap=round,line join=round, scale=1.75]

\draw[black, fill=black] (-1.9+4,-.12) circle (0.2pt);
\draw[black, fill=black] (-1.85+4,-.1) circle (0.2pt);
\draw[black, fill=black] (-1.8+4,-.085) circle (0.2pt);
\draw[black, fill=black] (.3+4,-.085) circle (0.2pt);
\draw[black, fill=black] (0.35+4,-.1) circle (0.2pt);
\draw[black, fill=black] (0.4+4,-.12) circle (0.2pt);

\draw [fill=black](2.5,0) circle (1pt);
\draw[gray] (2.25,.1) node {$\ss 3=$};
\draw (2.4,.1) node {$\ss 1$};
\draw (2.5,0) -- (2.4,-.2);
\draw [fill=black](4,0) circle (1pt);
\draw (4.3,.1) node {$\scriptstyle \ell_1+1$};
\draw (4,0) -- (4.1,-.2);
\draw (4,0) -- (3.9,-.2);


\draw (2.5,0) .. controls (2.25,.5) and (4.25,1) .. (4,0);
\draw [white, fill=white](3.25,.645) circle (5pt);
\draw [fill=black](3.17,.55) circle (.2pt);
\draw [fill=black](3.25,.56) circle (.2pt);
\draw [fill=black](3.33,.57) circle (.2pt);

\draw [fill=black](2.95,.5) circle (1pt);
\draw (2.87,.63) node {$\scriptstyle 4$};
 
\draw [fill=black](3.55,.57) circle (1pt);

 \draw [fill=black](3.9,.45) circle (1pt);
 \draw (4.1,.55) node {$\scriptstyle \ell_1$};

%
\draw (2.25,-.07) .. controls (2.75,.1) and (3.75,.1) .. (4.25,-.07);
\draw [white, fill=white](3.25,.06) circle (4pt);
\draw [fill=black](3.17,.06) circle (.2pt);
\draw [fill=black](3.25,.06) circle (.2pt);
\draw [fill=black](3.33,.06) circle (.2pt);
\draw [fill=black](2.75,.028) circle (1pt);
\draw (2.75,-.15) node {$\scriptstyle \ell_2$};
\draw [fill=black](3,.048) circle (1pt);
\draw [fill=black](3.5,.048) circle (1pt);
\draw [fill=black](3.75,.028) circle (1pt);

\draw(1.75,0) node {$H$};

\draw [white] (5,-.3) circle (.5pt);
\draw [white] (1.5,-.3) circle (.5pt);
\draw [white] (5,.85) circle (.5pt);
\draw [white] (1.5,.85) circle (.5pt);

\end{tikzpicture}
\end{center}
\caption{}
\label{fig:Contracting the a 2-path on I} 
\end{figure}
Denote the ear obtained from $\P$ after this operation by $\Q$.  
By induction: 
$$
\ts \reg H = \reg (H - \Q^\circ) + \bigl \lfloor \frac{\ell(\Q)}{2} \bigr \rfloor (q-2) = \reg G^\flat + \bigl \lfloor \frac{\ell(\P)}{2} \bigr \rfloor (q-2)-(q-2).
$$
Let $d=\nu\ss\set{3,4}$ and let $\t^{\overline{\nu}}\in K[E_H]$ be given by:
$$
\t^{\overline{\nu}}=\t^\nu t_{34}^{-d}t_{14}^d.
$$
Then, since 
$$
\ts \deg(\t^{\overline{\nu}})=\deg(\t^\nu) = \reg H + \bigl \lfloor \frac{\ell(\P)}{2} \bigr \rfloor + \bigl \lfloor \frac{\ell(\I)}{2} \bigr \rfloor  + (q-2)
$$
and $\t^{\delta_1}t_{23}^{-1}\t^{\delta_2}t_{1\ell_2}^{q-1}\in K[E_H]$ is such that
$$
\ts \deg (\t^{\delta_1}t_{23}^{-1}\t^{\delta_2}t_{1\ell_2}^{q-1}) = \bigl \lfloor \frac{\ell(\P)}{2} \bigr \rfloor + \bigl \lfloor \frac{\ell(\I)}{2} \bigr \rfloor  + (q-2),
$$
by Proposition~\ref{prop: computing reg by reducing to Artinian quotient} 
applied to the graph $H$, there exists $\t^{\overline{\mu}} \in K[E_{H}]$ such that 
$\t^{\delta_1}t_{23}^{-1}\t^{\delta_2}t_{1\ell_2}^{q-1}$ divides $\t^{\overline{\mu}}$ 
and  $\t^{\overline{\nu}} - \t^{\overline{\mu}} \in I_q(H)$.
Let $c=\overline{\mu}\ss\set{1,4}$ and let $\t^\mu\in K[E_G]$ be given by:
$$
\t^\mu = \t^{\overline{\mu}}t_{14}^{-c}t_{34}^c.
$$
By Proposition~\ref{prop: congruences},
the binomial $\t^{\overline{\nu}}-\t^{\overline{\mu}}$ satisfies a set of congruences modulo $q-1$, one for each vertex of $H$. 
In particular, at $1\in V_{H}$, we have:
\begin{equation}\label{eq: L1427}
d + \hspace{-.2cm}\sum_{k\in N_G(1)} \hspace{-.2cm} \nu\sset{1,k} \ds \hspace{.1cm} =  \hspace{-.2cm} \sum_{k\in N_{H}(1)} \hspace{-.2cm} \overline{\nu}\sset{1,k}\ds \hspace{.2cm}  
\equiv \sum_{k\in N_{H}(1)} \hspace{-.2cm} \overline{\mu}\sset{1,k}\ds \hspace{.1cm} = \hspace{.1cm}
c+\hspace{-.2cm} \sum_{k\in N_G(1)} \hspace{-.2cm}\mu\sset{1,k}.
\end{equation}
Let $a\in \set{1,\dots,q-1}$ be such that $a\equiv d-c$ and let $b=(q-1)-a$.
Then, as 
$\t^{\delta_1}t_{23}^{-1}\t^{\delta_2}t_{1\ell_2}^{q-1}$ divides $\t^{\overline{\mu}}$ and hence it divides $\t^\mu$, the binomial 
\begin{equation}\label{eq: L1696}
\t^\nu - \t^\mu t_{1\ell_2}^{-(q-1)} t_{12}^{b} t_{23}^{a}
\end{equation}
is a homogeneous binomial of the ring $K[E_{G}]$. Moreover, since 
$a\geq 1$, we deduce that $\t^{\delta_1}\t^{\delta_2}$ divides the monomial on the right side of \eqref{eq: L1696}.
\smallskip

\noindent
Let us prove that the binomial \eqref{eq: L1696}  belongs to $I_q(G)$. 
It suffices to check that the corresponding congruences at vertices $1$, $2$ and $3$ of $G$ are satisfied, since at any other
vertex the corresponding congruence is identical to the one in $H$. At the vertices
$2$ and $3$, we get, respectively,
$$
0\equiv a+b\quad\text{and}\quad d \equiv  c +a  \iff d-c \equiv a
$$
which hold, by the definitions of $b$ and $a$.
At the vertex $1$, we get:
$$
 \sum_{k\in N_G(1)}  \nu\sset{1,k} \ds  \equiv b -(q-1)+ \hspace{-.2cm} \sum_{k\in N_G(1)} \mu\sset{1,k} \ds   = c-d +\hspace{-.2cm} \sum_{k\in N_G(1)} \mu\sset{1,k},
$$
which holds by \eqref{eq: L1427}. Hence \eqref{eq: L1696} belongs to $I_q(G)$.
This concludes the proof, by induction, of the inequality \eqref{eq:L888} in the case of 
$\ell(\I)$ odd.
\smallskip

Let us now consider the case of $\ell(\I)$ and $\ell(\P)$ even. We start by proving that 
\begin{equation}\label{eq: L1222}
\ts \reg G \geq \reg G^\flat + \frac{\ell(\P)}{2}(q-2).
\end{equation}
Consider the simple graph $H$ obtained from $G$ by identifying the vertices $1$ and $\ell_1+1$ and denote by 
$H'$ be 
the subgraph of $H$ obtained from $G^\flat-\I^\circ$ under the same identification. (See Figure~\ref{fig: Identifying the end-vertices}.)
\begin{figure}[ht]
\begin{center}
\begin{tikzpicture}[line cap=round,line join=round, scale=1.75]

\draw[black, fill=black] (-1.9+.75,-.12-.8) circle (0.2pt);
\draw[black, fill=black] (-1.85+.75,-.1-.8) circle (0.2pt);
\draw[black, fill=black] (-1.8+.75,-.085-.8) circle (0.2pt);
\draw[black, fill=black] (.3-1.5+.75,-.085-.8) circle (0.2pt);
\draw[black, fill=black] (0.35-1.5+.75,-.1-.8) circle (0.2pt);
\draw[black, fill=black] (0.4-1.5+.75,-.12-.8) circle (0.2pt);

\draw [fill=black](-.75,-.8) circle (1pt);
\draw (-0.75,-.8) -- (-1.6+.65,-1);
\draw (-.75,-.8) -- (.1-.75,-.2-.8);
\draw (-.75,-.8) -- (-.1-.75,-.2-.8);

\draw (-.10,-.78) node {$\scriptstyle 1=\ell_1 +1 $};


\draw (-.75,-.8) .. controls (-2.75-.6,1.265) and (1.25+.6,1.265) .. (-.75,-.8);
\draw [white, fill=white](-.75,.745) circle (4pt);
\draw [fill=black](-.83,.745) circle (.2pt);
\draw [fill=black](-.75,.745) circle (.2pt);
\draw [fill=black](-.67,.745) circle (.2pt);

\draw [fill=black](-1.4,.5) circle (1pt);
\draw (-1.5,.55) node {$\scriptstyle 2$};
\draw [fill=black](-1,.72) circle (1pt);
\draw (-1.05,.82) node {$\scriptstyle 3$};
\draw [fill=black](-.5,.72) circle (1pt);
\draw [fill=black](-0.1,.5) circle (1pt);
\draw (0.1,.55) node {$\scriptstyle \ell_1$};

\draw (-.052,-.03) [white, fill=white] circle (1.5pt);
\draw (-1.43,-.03) [white, fill=white] circle (1.5pt);


\draw (-1.75+1.2,-.87) .. controls (-1.25-3,.37) and (-.25+3,.37) .. (.25-1.2,-.87);
\draw [white, fill=white](-.75,.06) circle (4pt);
\draw [fill=black](-.83,.06) circle (.2pt);
\draw [fill=black](-.75,.06) circle (.2pt);
\draw [fill=black](-.67,.06) circle (.2pt);
\draw [fill=black](-1.25,.02) circle (1pt);
\draw (-1.25,.15) node {$\scriptstyle \ell_2$};
\draw [fill=black](-1,.048) circle (1pt);

\draw [fill=black](-.5,.048) circle (1pt);
\draw [fill=black](-.25,.02) circle (1pt);

\draw [white] (.5,-1) circle (.5pt);
\draw [white] (-2,-1) circle (.5pt);
\draw [white] (.5,.95) circle (.5pt);
\draw [white] (-2,.95) circle (.5pt);

\end{tikzpicture}
\end{center}
\caption{}
\label{fig: Identifying the end-vertices} 
\end{figure}

\noindent
Since $H'$ is a bipartite ear modification of $G^\flat$,
$$
\ts \reg H' = \reg G^\flat - \frac{\ell(I)}{2}(q-2).
$$
On the other hand, using Propositions~\ref{prop: vertex identification} and \ref{prop: adding a path to the endpoints of an edge}, or, in
the case of $\ell(\P)=2$ or $\ell(\I)=2$, Proposition~\ref{prop: additivity of reg on leaves},
$$
\ts \reg G \geq \reg H = \reg H' + \frac{\ell(\P)}{2}(q-2) + \frac{\ell(\I)}{2}(q-2)= \reg G^\flat + \frac{\ell(\P)}{2}(q-2),
$$
which proves \eqref{eq: L1222}. To prove that 
\begin{equation}\label{eq: L1824}
\ts \reg G \leq \reg G^\flat + \frac{\ell(\P)}{2}(q-2),
\end{equation}
by Proposition~\ref{prop: computing reg by reducing to Artinian quotient}, it suffices to show that 
for every $\t^\nu\in K[E_{G}]$ with 
\begin{equation}\label{eq: L296}
\ts\deg(\t^\nu) = \reg G^\flat + \frac{\ell(\P)}{2}(q-2)+1,
\end{equation}
there exists $\t^\mu$, divisible by $t_{12}$, such that $\t^\nu - \t^\mu\in I_q(G)$.
Consider the following graphs:
$$
(G^\flat)^*= G^\flat \cup \set{1,2}, \quad \C = \P\cup \I\quad \text{and} \quad G-\I^\circ. 
$$
(See Figure~\ref{fig: the 3 graphs in the decomposition}.)
\begin{figure}[ht]
\begin{center}
\begin{tikzpicture}[line cap=round,line join=round, scale=1.8]

\draw[black, fill=black] (-1.9,-.12) circle (0.4pt);
\draw[black, fill=black] (-1.85,-.1) circle (0.4pt);
\draw[black, fill=black] (-1.8,-.085) circle (0.4pt);
\draw[black, fill=black] (.3,-.085) circle (0.4pt);
\draw[black, fill=black] (0.35,-.1) circle (0.4pt);
\draw[black, fill=black] (0.4,-.12) circle (0.4pt);

\draw (-1.6,.1) node {$\scriptstyle 1$};
\draw (-1.5,.55) node {$\scriptstyle 2$};
\draw [fill=black](-1.5,0) circle (.9pt);
\draw [line width = 1pt](-1.5,0) -- (-1.6,-.2);
\draw [fill=black](0,0) circle (.9pt);
\draw [line width = 1pt](0,0) -- (.1,-.2);
\draw [line width = 1pt](0,0) -- (-.1,-.2);


\draw [gray,dashed] (-1.5,0) .. controls (-1.75,1) and (.25,1) .. (0,0);
\draw [line width = 1pt](-1.5,0) .. controls (-1.55,.25) and (-1.5,.4) .. (-1.4,.5);
\draw [white, fill=white](-.75,.745) circle (4pt);
\draw [fill=gray, gray](-.83,.745) circle (.2pt);
\draw [fill=gray, gray](-.75,.745) circle (.2pt);
\draw [fill=gray, gray](-.67,.745) circle (.2pt);

\draw [fill=black, black](-1.4,.5) circle (.9pt);
\draw [fill=gray, gray](-1,.72) circle (.9pt);
\draw [fill=gray, gray](-.5,.72) circle (.9pt);
\draw [fill=gray, gray](-0.1,.5) circle (.9pt);


\draw [line width = 1pt](-1.75,-.07) .. controls (-1.25,.1) and (-.25,.1) .. (.25,-.07);
\draw [white, fill=white](-.75,.06) circle (4pt);
\draw [fill=black](-.83,.06) circle (.4pt);
\draw [fill=black](-.75,.06) circle (.4pt);
\draw [fill=black](-.67,.06) circle (.4pt);
\draw [fill=black](-1.25,.028) circle (.9pt);
\draw [fill=black](-1,.048) circle (.9pt);

\draw [fill=black](-.5,.048) circle (.9pt);
\draw [fill=black](-.25,.028) circle (.9pt);

\draw(-2.15,.3) node {$(G^\flat)^*$};

\draw [white] (1,-.3) circle (.5pt);
\draw [white] (-2.5,-.3) circle (.5pt);
\draw [white] (1,.85) circle (.5pt);
\draw [white] (-2.5,.85) circle (.5pt);

\end{tikzpicture}

\begin{tikzpicture}[line cap=round,line join=round, scale=1.8]

\draw[gray, fill=gray] (-1.9,-.12) circle (0.2pt);
\draw[gray, fill=gray] (-1.85,-.1) circle (0.2pt);
\draw[gray, fill=gray] (-1.8,-.085) circle (0.2pt);
\draw[gray, fill=gray] (.3,-.085) circle (0.2pt);
\draw[gray, fill=gray] (0.35,-.1) circle (0.2pt);
\draw[gray, fill=gray] (0.4,-.12) circle (0.2pt);

\draw (-1.6,.1) node {$\scriptstyle 1$};
\draw (-1.5,.55) node {$\scriptstyle 2$};
\draw [fill=black](-1.5,0) circle (.9pt);
\draw [gray](-1.5,0) -- (-1.6,-.2);
\draw [fill=black](0,0) circle (.9pt);
\draw [gray] (0,0) -- (.1,-.2);
\draw [gray] (0,0) -- (-.1,-.2);


\draw [line width = 1pt](-1.5,0) .. controls (-1.75,1) and (.25,1) .. (0,0);
\draw [white, fill=white](-.75,.745) circle (4pt);
\draw [fill=black](-.83,.745) circle (.4pt);
\draw [fill=black](-.75,.745) circle (.4pt);
\draw [fill=black](-.67,.745) circle (.4pt);

\draw [fill=black](-1.4,.5) circle (.9pt);
\draw [fill=black](-1,.72) circle (.9pt);

\draw [fill=black](-.5,.72) circle (.9pt);

\draw [fill=black](-0.1,.5) circle (.9pt);


\draw [gray](-1.75,-.07) .. controls (-1.25,.1) and (-.25,.1) .. (.25,-.07);

\draw [line width = 1pt](-1.5,0) .. controls (-1.1,.07) and (-.4,.07) .. (0,0);

\draw [fill=black](-1.5,0) circle (.9pt);
\draw [fill=black](0,0) circle (.9pt);

\draw [white, fill=white](-.75,.06) circle (4pt);
\draw [fill=black](-.83,.06) circle (.4pt);
\draw [fill=black](-.75,.06) circle (.4pt);
\draw [fill=black](-.67,.06) circle (.4pt);
\draw [fill=black](-1.25,.028) circle (.9pt);
\draw [fill=black](-1,.048) circle (.9pt);

\draw [fill=black](-.5,.048) circle (.9pt);
\draw [fill=black](-.25,.028) circle (.9pt);

\draw(-2.15,.3) node {$\C$};

\draw [white] (1,-.3) circle (.5pt);
\draw [white] (-2.5,-.3) circle (.5pt);
\draw [white] (1,.85) circle (.5pt);
\draw [white] (-2.5,.85) circle (.5pt);

\end{tikzpicture}

\begin{tikzpicture}[line cap=round,line join=round, scale=1.8]

\draw[black, fill=black] (-1.9,-.12) circle (0.4pt);
\draw[black, fill=black] (-1.85,-.1) circle (0.4pt);
\draw[black, fill=black] (-1.8,-.085) circle (0.4pt);
\draw[black, fill=black] (.3,-.085) circle (0.4pt);
\draw[black, fill=black] (0.35,-.1) circle (0.4pt);
\draw[black, fill=black] (0.4,-.12) circle (0.4pt);

\draw (-1.6,.1) node {$\scriptstyle 1$};
\draw (-1.5,.55) node {$\scriptstyle 2$};
\draw [fill=black](-1.5,0) circle (.9pt);
\draw [line width = 1pt](-1.5,0) -- (-1.6,-.2);
\draw [fill=black](0,0) circle (.9pt);
\draw [line width = 1pt](0,0) -- (.1,-.2);
\draw [line width = 1pt](0,0) -- (-.1,-.2);


\draw [line width = 1pt](-1.5,0) .. controls (-1.75,1) and (.25,1) .. (0,0);
\draw [white, fill=white](-.75,.745) circle (4pt);
\draw [fill=black](-.83,.745) circle (.4pt);
\draw [fill=black](-.75,.745) circle (.4pt);
\draw [fill=black](-.67,.745) circle (.4pt);

\draw [fill=black](-1.4,.5) circle (.9pt);
\draw [fill=black](-1,.72) circle (.9pt);

\draw [fill=black](-.5,.72) circle (.9pt);

\draw [fill=black](-0.1,.5) circle (.9pt);


\draw [line width = 1pt](-1.75,-.07) .. controls (-1.25,.1) and (-.25,.1) .. (.25,-.07);
\draw [line width = 2pt, white](-1.5,0) .. controls (-1.1,.07) and (-.4,.07) .. (0,0);
\draw [gray](-1.5,0) .. controls (-1.1,.07) and (-.4,.07) .. (0,0);

\draw [fill=black](-1.5,0) circle (.9pt);
\draw [fill=black](0,0) circle (.9pt);

\draw [white, fill=white](-.75,.06) circle (4pt);
\draw [gray,fill=gray](-.83,.06) circle (.2pt);
\draw [gray,fill=gray](-.75,.06) circle (.2pt);
\draw [gray,fill=gray](-.67,.06) circle (.2pt);
\draw [gray,fill=gray](-1.25,.028) circle (1pt);
\draw [gray,fill=gray](-1,.048) circle (.9pt);
\draw [gray,fill=gray](-.5,.048) circle (.9pt);
\draw [gray,fill=gray](-.25,.028) circle (.9pt);

\draw(-2.15,.3) node {$G-\I^\circ$};

\draw [white] (1,-.3) circle (.5pt);
\draw [white] (-2.5,-.3) circle (.5pt);
\draw [white] (1,.85) circle (.5pt);
\draw [white] (-2.5,.85) circle (.5pt);

\end{tikzpicture}
\end{center}
\caption{}
\label{fig: the 3 graphs in the decomposition} 
\end{figure}
By Proposition~\ref{prop: additivity of reg on leaves}, 
$\reg (G^\flat)^* = \reg G^\flat + (q-2)$.
Since $\C$ is an even cycle, 
$$
\ts \reg \C = \frac{\ell(\P)}{2}(q-2) + \frac{\ell(\I)}{2}(q-2)-(q-2).
$$
Finally, since $G^\flat$ satisfies the bipartite ear modification hypothesis, 
$$
\ts \reg (G-\I^\circ) = \reg G^\flat -\frac{\ell(\I)}{2}(q-2) + \frac{\ell(\P)}{2}(q-2).
$$
Let us write $\t^\nu = \t^\alpha \t^\beta \t^\gamma$ 
for some $\alpha,\beta,\gamma \in \NN^{E_{G}}$ satisfying  
$$
\t^\alpha \in K[E_{\P}],\quad \t^\beta \in K[E_{\I}]\quad \text{and}\quad \t^\gamma \in K[E_{G^\flat- \I^\circ}].
$$
Suppose that 
\begin{equation}\label{eq: L322}
\renewcommand{\arraystretch}{1.5}
\left \{
\begin{array}{l}
\deg(\t^\alpha) + \deg(\t^\beta) \leq \reg \C \\
\deg(\t^\beta) + \deg(\t^\gamma) \leq \reg (G^\flat)^* \\
\deg(\t^\alpha) + \deg(\t^\gamma) \leq \reg (G-\I^\circ) \\
\end{array}
\right .
\end{equation}
Then,
$$
\textstyle \deg(\t^\nu) \leq \frac12 \big [\reg \C + \reg (G^\flat)^* + \reg (G-\I^\circ) \big ] =   
\reg G^\flat +   \frac{\ell(\P)}{2} (q-2),
$$
which is in contradiction with \eqref{eq: L296}. Hence the opposite inequality of one of the inequalities in \eqref{eq: L322} must hold.
For instance if it is the opposite of the first inequality, then, by Proposition~\ref{prop: computing reg by reducing to Artinian quotient},
there exists $\t^\mu \in K[E_\C]$, divisible by $t_{12}$, such that 
$$
\t^\alpha \t^\beta - \t^\mu \in I_q(\C) \subset I_q(G)
$$
which implies that 
$$
\t^\nu - \t^\mu\t^\gamma \in I_q(G),
$$
as desired. We argue similarly for the other two cases. This proves \eqref{eq: L1824} and concludes the proof of the theorem.
\end{proof}

The next two examples show that the assumptions of 
Theorem~\ref{thm: adding a path to the endpoints of path with degree 2 inner vertices}
are strictly necessary.

\begin{exam}\label{exam: ear is needed}
Let $G^\flat$ be the graph of Figure~\ref{fig: contraexample to adding path}. 
This graph decomposes into a
cycle of length six and two cycles of length four. By Proposition~\ref{prop: reg bound from graph dec}, $\reg G^\flat \leq 4(q-2)$. On the other hand,
the set $V=\set{2,4,6,8}$ is an independent set for which $G-V$ has a nonempty edge set. 
Hence, by Proposition~\ref{prop: independent sets and regularity}, $\reg G^\flat \geq 4(q-2)$. We conclude that 
$\reg G^\flat = 4(q-2)$.
\begin{figure}[ht]
\begin{center}
\begin{tikzpicture}[line cap=round,line join=round, scale=1.5]

\draw (0,0) circle (20pt);
\draw [fill=black] (.7,0) node [anchor=west] {$\scriptstyle   2$} circle (1pt);
\draw [fill=black] (.37,0.6) node [anchor=south] {$\scriptstyle  1$} circle (1pt);
\draw [fill=black] (-.37,0.6) node [anchor=south] {$\scriptstyle   5$} circle (1pt);

\draw [fill=black] (-.7,0) node [anchor=east] {$\scriptstyle   8$} circle (1pt);
\draw [fill=black] (-.37,-0.6) node [anchor=north] {$\scriptstyle   7$} circle (1pt);
\draw [fill=black] (.37,-0.6) node [anchor=north] {$\scriptstyle  3$} circle (1pt);

\draw (-0.37,.6) .. controls (-1.5,.5) and (-1.5,-.5) ..(-0.37,-.6);
\draw [fill=black] (-1.215,.0) node [anchor=east] {$\scriptstyle   6$} circle (1pt);

\draw (0.37,.6) .. controls (1.5,.5) and (1.5,-.5) ..(0.37,-.6);
\draw [fill=black] (1.215,.0) node [anchor=west] {$\scriptstyle   4$} circle (1pt);

\end{tikzpicture}
\end{center}
\caption{}
\label{fig: contraexample to adding path}
\end{figure}
Let $G$ be the graph obtained by adding the 
edge $\set{2,8}$ to $G^\flat$. Unlike $G^\flat$, the graph $G$ has now spanning cycle (of length $8$). 
By Proposition~\ref{prop: bound from a spanning subgraph}, we get $\reg G \leq 3(q-2)$. By a similar argument as above, 
one can show that indeed $\reg G = 3(q-2)$. Thus the conclusion of Theorem~\ref{thm: adding a path to the endpoints of path with degree 2 inner vertices},
which in this case would state that $\reg G^\flat = \reg G$,
does not hold. This is because the hypothesis that, besides the edge $\set{2,8}$, 
there should be another ear in $G$ with end-vertices $2$ and $8$, is not satisfied.
\end{exam}

\begin{exam}\label{exam: bipartite is needed}
Consider the graph, $G$, illustrated in Figure~\ref{fig: contraexample to adding path in non-bipartite}.
$G$ is a non-bipartite parallel composition of paths; two of length two and a path 
of length three. 
\begin{figure}[ht]
\begin{center}
\begin{tikzpicture}[line cap=round,line join=round, scale=1.5]

\draw[fill=black] (-.33,0) circle (1pt);
\draw[fill=black] (.33,0) circle (1pt);
\draw[fill=black] (1,0) circle (1pt);
\draw[fill=black] (-1,0) circle (1pt);
\draw (-1,0)--(1,0);

\draw (-1,0) .. controls (-.5,.7) and (.5,.7) .. (1,0);
\draw (-1,0) .. controls (-.5,-.7) and (.5,-.7) .. (1,0);

\draw[fill=black] (0,.52) circle (1pt);
\draw[fill=black] (0,-.52) circle (1pt);

\end{tikzpicture}
\end{center}
\caption{}
\label{fig: contraexample to adding path in non-bipartite}
\end{figure}
According to \cite[Theorem~1.2]{MaNeVPVi},
$\reg G = 4(q-2)$. Consider $G^\flat\subset G$ the subgraph given by the parallel composition of one of the paths 
of length two and the path of length three. Then, $G^\flat$ is a cycle of length $5$ and, accordingly, 
$\reg G^\flat = 4(q-2)$. If we take $G^\flat$ to be the parallel composition of the two paths of length two. Then 
$\reg G^\flat = q-2$. In  both cases, the conclusion of Theorem~\ref{thm: adding a path to the endpoints of path with degree 2 inner vertices}
does not hold. 
\end{exam}

\section{Nested Ear Decompositions}
\label{sec: nested ear decomposition}

The goal of this section is to give a formula for the Castelnuovo--Mumford regularity of a graph endowed with
a special decomposition into paths. An ear decomposition of a graph
consists of a collection of $r>0$ subgraphs
$$
\P_0,\P_1,\dots,\P_r,
$$ 
the edge sets of which form a partition of $E_G$, such that 
$\P_0$ is a vertex and, for all $1\leq i \leq r$, $\P_i$ 
is a path with end-vertices in $\P_0\cup \cdots \cup \P_{i-1}$ while \emph{none} 
of its inner vertices belong to $\P_0\cup \cdots \cup \P_{i-1}$.
The paths $\P_1,\dots,\P_r$ are called ears of the decomposition of $G$. We note that $\P_i$ is not necessarily 
an ear of $G$, according to Definition~\ref{def: ear}, as its inner vertices may become end-vertices of the following ears.
An ear decomposition is called \emph{open} if all of paths $\P_2,\dots,\P_r$ have distinct end-vertices.
It is well known that a graph is $2$-vertex-connected if and only if it has 
an open ear decomposition (Whitney's Theorem). More generally, a graph is $2$-edge-connected 
if and only if has an ear decomposition.

\begin{definition}\label{def: nested decompositions of a graph}
Let $\P_0$, $\P_1,\dots,\P_r$ be an ear decomposition of a graph, $G$. 
If a path $\P_i$ has both its end-vertices in $\P_j$ 
we say that $\P_i$ is nested in $\P_j$ and define the corresponding \emph{nest interval} to be the subpath of $\P_j$ 
determined by the end-vertices of $\P_i$, if they are distinct, or, if they coincide, to be that single end-vertex. 
An ear decomposition of $G$ is \emph{nested} if, 
for all $1\leq i\leq r$, the path $\P_i$ is nested in a previous subgraph of the decomposition, $\P_j$, with 
$j<i$, and, in addition,  
if two paths $\P_i$ and $\P_l$ are nested in $\P_j$, with $j<i,l$, then either  
the correspon\-ding nest intervals in $\P_j$ have disjoint edge sets or one edge set is contained in the other.
\end{definition}

Nested ear decompositions were introduced by Eppstein in \cite{Ep92}. 
In the original definition $P_0$ is allowed to be a path and 
thus the graphs considered in \cite{Ep92} are not necessarily $2$-edge-connected.
\smallskip

The main result of this section is Theorem~\ref{thm: regularity of a nested ear decomposition},
which gives a formula for the Castelnuovo--Mumford regularity of a bipartite graph
endowed with a nested ear decomposition. In the proof of this result
we will need to  show that a graph endowed with a nested ear 
decomposition satisfies the bipartite ear modification hypothesis along a certain ear. 
However, an instance of a bipartite ear modification,
namely the one involving removing the ear and identifying its end-vertices can modify the 
ear decomposition structure by introducing \emph{pendant edges} and, thus, producing a graph $G'$ 
which may well not be $2$-edge connected. We remedy this by working on a wider class of
graphs, that of graphs endowed with a weaker form of nested ear decompositions.

\begin{definition}
A \emph{weak nested ear decomposition} of a graph is a collection of subgraphs $\P_0,\dots,\P_r$, 
with $r>0$, the edge sets of which form a partition of $E_G$, such that 
$\P_0$ is a vertex and, for every $1\leq i \leq r$, $\P_i$ is a path such that either 
\begin{enumerate}
\item both end-vertices of $\P_i$ belong to some $\P_j$, with $j<i$ and 
none of its inner vertices belongs $\P_0\cup \cdots \cup \P_{i-1}$, or
\item if $\ell(\P_i)=1$, only one of the end-vertices of $\P_i$ belongs to $\P_0\cup \cdots \cup \P_{i-1}$.
\end{enumerate}
If $\P_i$ has both its end-vertices in $\P_j$, the nest interval of $\P_i$ in $\P_j$ is defined as before. 
If $\ell(\P_i)=1$ and only one end-vertex belongs to a previous $\P_j$ then 
the nest interval is defined to be this vertex. The nesting condition of Definition~\ref{def: nested decompositions of a graph} 
is the same. 
\end{definition}

If both end-vertices of $\P_i$ belong to a previous $\P_j$, then $\P_i$ will be referred to as
an ear of the decomposition, otherwise, if $\ell(\P_i)=1$ and $\P_i$ has only one end-vertex in
a previous $\P_j$, then $\P_i$ will referred to as a \emph{pendant edge} of the decomposition. An ear
with coinciding end-vertices will also be referred to as a \emph{pending cycle} of the decomposition.
Notice that when $\ell(P_i)=1$, $\P_i$ can either be an ear (so-called trivial ear) or a pendant edge 
of the decomposition. Figure~\ref{fig: example of weak ear decomposition} shows a graph that 
can be endowed with a weak nested ear decomposition.
\begin{figure}[ht]
\vspace{-.5cm}
\begin{center}
\begin{tikzpicture}[line cap=round,line join=round, scale=3.5]

\draw[black, fill=black] (0,0) circle (0.5pt) node[anchor = south] {$\ss 1$};
\draw (0,0) -- (-.25,.1);
\draw[black, fill=black] (-.25,.1) circle (0.5pt) node[anchor = south east] {$\ss 2$};
\draw (0,0).. controls (.65,.65) and (.65,-.65) .. (0,0);

\draw (.3,.187) .. controls (.7,0.55) and (.9,0) .. (.485,-.05);
\draw[black, fill=black] (.3,.187) circle (0.5pt) node[anchor = south east] {$\ss 3$};
\draw[black, fill=black]  (.485,-.05) circle (0.5pt) node[anchor = north west] {$\ss 4$};
\draw[black, fill=black]  (.185,-.145) circle (0.5pt) node[anchor = north east] {$\ss 5$};
\draw[black, fill=black]  (.6,0.3) circle (0.5pt) node[anchor = south west] {$\ss 6$};

\draw (.68,.05).. controls (.8,0) and (1.1,0) .. (1.2,.05);
\draw[black, fill=black]  (.68,.05) circle (0.5pt) node[anchor = north] {$\ss 7$};
\draw[black, fill=black]  (1.2,.05) circle (0.5pt) node[anchor = north] {$\ss 9$};
\draw[black, fill=black]  (0.935,0.01) circle (0.5pt) node[anchor = north] {$\ss 8$};

\draw(1.372, 0.13) circle (5.3pt);

\draw[black, fill=black]  (1.3,0.3) circle (0.5pt) node[anchor = south] {$\ss 10$};
\draw[black, fill=black]  (1.435,-.045) circle (0.5pt) node[anchor = north] {$\ss 12$};
\draw[black, fill=black]  (1.54,0.21) circle (0.5pt) node[anchor = south west] {$\ss 11$};

\end{tikzpicture}
\end{center}
\vspace{-1cm}
\caption{}
\label{fig: example of weak ear decomposition}
\end{figure}
For instance, 
$\P_0=1$,
$\P_1=(1,2)$,
$\P_2 = (1,5)$,
$\P_3 = (1,3,4,5)$,
$\P_4 = (3,6,7,4)$,
$\P_5 = (7,8)$,
$\P_6 = (8,9)$,
$\P_7 = (9,10,11,12,9)$.
Another weak nested ear decomposition of this graph can be given by 
$\P_0=1$, 
$\P_1 = (1,2)$, 
$\P_2 = (1,3,4,5,1)$, etc., as in the previous decomposition. 
We note that the number of even ears and pendant edges of these 
decompositions is five, the same in both.
\smallskip

It is clear that a nested ear decomposition of a graph is a weak nested ear decomposition. 
Note that, as the example above shows, not all graphs endowed with a 
weak nested ear decomposition are $2$-edge-connected.  
\smallskip

We will prove Theorem~\ref{thm: regularity of a nested ear decomposition} 
by induction on the number of ears of the decomposition. For that, we will need the following lemma.

\begin{lemma}\label{lemma: existence of an ear with a nest interval ear}
Let $\P_0,\P_1, \dots, \P_r$ be a weak nested ear decomposition of a graph 
$G$. Then, there exists $i>0$ such that either $\P_i$ is a pendant edge of $G$, or a pendant cycle 
of $G$, or an ear of $G$ with distinct end-vertices 
such that for any $\P_k$ containing both end-vertices of $\P_i$, 
the subpath of $\P_k$ induced  by them is an ear of $G$. 
\end{lemma}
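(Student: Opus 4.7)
My plan is to proceed by induction on the number of ears $r$ in the decomposition. In the base case $r=1$, the subgraph $\P_0$ is a single vertex, so $\P_1$ must either have both endpoints at that vertex (a pending cycle of the decomposition) or have length one with only one endpoint there (a pendant edge of the decomposition); in either case, since no later ear exists, $\P_1$ is automatically a pending cycle or pendant edge of $G$.

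For the inductive step, I would first examine the last ear $\P_r$. Since no later ear exists, its inner vertices all have degree $2$ in $G$, so $\P_r$ is automatically a pending cycle, a pendant edge, or an ear of $G$ with distinct endpoints. In the first two sub-cases, taking $i=r$ finishes. In the third, if the nest-interval condition holds for $\P_r$, we again finish with $i=r$. Otherwise, there exists $\P_k$ with $k<r$ containing both endpoints $u,v$ of $\P_r$ such that the $u$-to-$v$ subpath of $\P_k$ has an inner vertex $w$ of degree greater than $2$ in $G$.

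The key observation is that such a $w$, being introduced as an inner vertex of $\P_k$, can appear in a later ear only as an endpoint. Tracing the chain of parents one constructs an ear $\P_{l^{\ast}}$ both of whose endpoints lie on $\P_k$ with $w$ as one of them; combined with the nesting condition and the fact that $w$ is strictly interior to $[u,v]$, this forces the containment $I(\P_{l^{\ast}},\P_k) \subsetneq [u,v]$. Iterating this argument inside the laminar family of nest intervals on $\P_k$ produces an ear $\P_i$ whose nest interval on $\P_k$ is a minimal element of the laminar family, and the same chain argument then shows that every interior vertex of this minimal nest interval has degree $2$ in $G$. Hence the $u$-to-$v$ subpath of $\P_k$ is an ear of $G$ for the new $\P_i$.

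The main obstacle will be verifying the nest-interval condition for every other $\P_{k'}$ containing both endpoints of $\P_i$, not just for $\P_k$. The structural fact to exploit is that the set of such $\P_{k'}$ forms a chain --- the earliest $\P_{k^{(0)}}$ containing $u,v$, together with subsequent ears having $\{u,v\}$ as endpoints, all nested in $\P_{k^{(0)}}$ with the same nest interval. I would handle this by strengthening the selection of $\P_i$ to be simultaneously minimal in the laminar families of every relevant $\P_{k'}$ (for example, by choosing, among all laminar-minimal candidates, the one with the largest index $i$, which eliminates the $k'>i$ case), and then invoking the chain argument and the nesting condition within each remaining $\P_{k'}$ in the chain to propagate the conclusion that the corresponding subpaths are ears of $G$.
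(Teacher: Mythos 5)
Your argument has a genuine gap at its core: the witness you select is only shown to have a good \emph{nest interval}, not to be a good \emph{ear}. The lemma requires $\P_i$ itself to be a pendant edge, pending cycle, or ear \emph{of $G$} --- in particular all inner vertices of $\P_i$ must have degree two in $G$ --- and choosing $\P_i$ so that its nest interval on $\P_k$ is laminar-minimal does not achieve this, because the inner vertices of $\P_i$ can acquire extra degree from ears nested in $\P_i$ itself. Concretely, take $\P_0=1$, $\P_1=(1,2,\dots,8,1)$, $\P_2=(2,b_1,b_2,b_3,4)$, $\P_3=(b_1,c,b_3)$, $\P_4=(1,d,5)$. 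Here $\P_r=\P_4$ is an open ear of $G$ but fails your nest-interval test on $\P_1$ (the vertices $2$ and $4$ have degree three); your rule then descends to the laminar-minimal interval $(2,3,4)$ and returns $\P_2$, whose nest interval is indeed an ear of $G$ --- but $\P_2$ is not an ear of $G$, since $b_1$ and $b_3$ have degree three. The valid witness is $\P_3$, which your selection never reaches. A second gap: degree excess at an interior vertex of a nest interval can come from a \emph{pendant edge} of the decomposition, whose nest interval is a single vertex with empty edge set and is therefore invisible to the laminar-minimality argument; nor can you simply return that pendant edge, since a pendant edge of the decomposition need not be a pendant edge of $G$ (its free end can later be absorbed by an ear nested in it). Finally, your closing paragraph is an acknowledged hole rather than a proof, and the proposed repair (largest index among laminar-minimal candidates) addresses neither of the problems above.

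Note also that although you announce induction on $r$, your inductive step never invokes the inductive hypothesis; this is precisely what the paper exploits to avoid all of the bookkeeping above. The paper applies the lemma to $G^\flat=\P_0\cup\cdots\cup\P_{r-1}$ to obtain a witness $\P_i$ that already satisfies \emph{every} required property in $G^\flat$, so that only the single new path $\P_r$ can spoil it; a short case analysis (a pendant edge of $G^\flat$ that stops being pendant, a pending cycle that stops being an ear, an open ear one of whose induced subpaths gains an end-vertex of $\P_r$) shows that in each failure mode $\P_r$ itself becomes a valid witness, using the nesting condition exactly once. I would restructure your proof around this genuine use of the induction; the laminar-minimality idea could probably be pushed through, but it would require simultaneously controlling the degrees of the inner vertices of $\P_i$, of every relevant nest interval, and of the free ends of pendant edges, which is substantially more delicate than what you have written.
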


\begin{proof}
We argue by induction on $r\geq 1$. 
If $r=1$ then it suffices to take $i=1$. 
If $r>1$, consider the graph $G^\flat = \P_0\cup \cdots \cup \P_{r-1}$. 
By induction, there exist $i>0$ and $\P_i$ satisfying the conditions in the statement.
\begin{figure}[ht]
\begin{center}
\begin{tikzpicture}[line cap=round,line join=round, scale=2.5]

\draw[black, fill=black] (.2-1.7,-.02) circle (0.2pt);
\draw[black, fill=black] (.2-1.65,-.01) circle (0.2pt);
\draw[black, fill=black] (.2-1.6,0) circle (0.2pt);

\draw[black, fill=black] (.1-.2,0) circle (0.2pt);
\draw[black, fill=black] (0.15-.2,-.01) circle (0.2pt);
\draw[black, fill=black] (.2-.2,-.02) circle (0.2pt);

\draw (-1.35,0.01) .. controls (-1.25,.03) and (-.25,.03) .. (-.15,0.01);

\draw (-.75,.028) .. controls (0.25,0.2) and (-.35,.85) .. (-.85,.5);
\draw [white, fill=white](-.3,.42) circle (3pt);
\draw [fill=black](-.27,.5) circle (.2pt);
\draw [fill=black](-.23,.46) circle (.2pt);
\draw [fill=black](-.21,.41) circle (.2pt);
\draw [fill=black](-.45,.11) circle (.6pt);
\draw [fill=black](-.54,.6) circle (.6pt);

\draw[black] (-.05,.53)node {$\P_r$};

\draw (-.75,.028) -- (-.85,.5);
\draw [fill=black](-.85,.5) circle (.6pt);

\draw [fill=black](-1.25,.018) circle (.6pt);
\draw [fill=black](-.25,.018) circle (.6pt);
\draw [fill=black](-.75,.028) circle (.6pt);

\draw (-1,.25)node {$\P_i$};

\draw (-.75,-.2) node {\footnotesize{(a)}};

\draw [white] (.5,-.3) circle (.5pt);
\draw [white] (-2,-.3) circle (.5pt);
\draw [white] (.5,.85) circle (.5pt);
\draw [white] (-2,.85) circle (.5pt);

\end{tikzpicture}

\begin{tikzpicture}[line cap=round,line join=round, scale=2.5]

\draw[black, fill=black] (.2-1.7,-.02) circle (0.2pt);
\draw[black, fill=black] (.2-1.65,-.01) circle (0.2pt);
\draw[black, fill=black] (.2-1.6,0) circle (0.2pt);

\draw[black, fill=black] (.1-.2,0) circle (0.2pt);
\draw[black, fill=black] (0.15-.2,-.01) circle (0.2pt);
\draw[black, fill=black] (.2-.2,-.02) circle (0.2pt);

\draw (-1.35,0.01) .. controls (-1.25,.03) and (-.25,.03) .. (-.15,0.01);
\draw [fill=black](-1.25,.018) circle (.6pt);
\draw [fill=black](-.25,.018) circle (.6pt);
\draw [fill=black](-.75,.028) circle (.6pt);

\draw (-.75,.028) .. controls (-2,0.6) and (0.25,1) .. (-.75,.028);
\draw [white, fill=white](-.48,.38) circle (3pt);

\draw [fill=black](-.96,0.57) circle (.6pt);
\draw [fill=black](-1.03,.2) circle (.6pt);

\draw [fill=black](-.6,.19) circle (.6pt);
\draw [fill=black](-.57,.58) circle (.6pt);
\draw [fill=black](-.485,.44) circle (.2pt);
\draw [fill=black](-.49,.39) circle (.2pt);
\draw [fill=black](-.51,.34) circle (.2pt);

\draw (-1.25,.35)node {$\P_i$};

\draw (-.6,.19) .. controls (0.25,0.2) and (-.15,.85) .. (-.57,.58);
\draw [white, fill=white](-.2,.32) circle (3pt);
\draw [fill=black](-.225,.265) circle (.2pt);
\draw [fill=black](-.175,.305) circle (.2pt);
\draw [fill=black](-.13,.35) circle (.2pt);

\draw [fill=black](-.38,.21) circle (.6pt);
\draw [fill=black](-.35,.645) circle (.6pt);
\draw [fill=black](-.11,.5) circle (.6pt);

\draw[black] (0,.23)node {$\P_r$};

\draw (-.75,-.2) node {\footnotesize{(b)}};

\draw [white] (.5,-.3) circle (.5pt);
\draw [white] (-2,-.3) circle (.5pt);
\draw [white] (.5,.85) circle (.5pt);
\draw [white] (-2,.85) circle (.5pt);

\end{tikzpicture}

\begin{tikzpicture}[line cap=round,line join=round, scale=2.5]

\draw[black, fill=black] (-1.21,-.1) circle (0.2pt);
\draw[black, fill=black] (-1.187,-.142) circle (0.2pt);
\draw[black, fill=black] (-1.16,-.18) circle (0.2pt);

\draw[black, fill=black] (-.285,-.1) circle (0.2pt);
\draw[black, fill=black] (-.31,-.14) circle (0.2pt);
\draw[black, fill=black] (-.34,-.18) circle (0.2pt);

\draw (-1.25,.018) .. controls (-1.25,.03) and (-.25,.03) .. (-.25,.018);
\draw [fill=black](-1.25,.018) circle (.6pt);
\draw [fill=black](-.25,.018) circle (.6pt);
\draw [fill=black](-.95,.028) circle (.6pt);
\draw [fill=black](-.55,.028) circle (.6pt);
\draw [white, fill=white](-.75,0) circle (3pt);
\draw [fill=black](-.80,.025) circle (.2pt);
\draw [fill=black](-.75,.025) circle (.2pt);
\draw [fill=black](-.70,.025) circle (.2pt);
\draw (-1.25,.018) -- (-1.27,-.05);
\draw (-1.25,.018) -- (-1.23,-.05);
\draw (-.25,.018) -- (-.23,.-.06);
\draw (-.25,.018) -- (-.27,.-.06);

\draw (-0.96,.12)node {$\P_i$};

\draw (-1.25,.018) .. controls (-1.5,1) and (0,1) .. (-.25,.018);
\draw (-1.25,.018) .. controls (-1.25,.75) and (-0.25,.75) .. (-.25,.018);
\draw (-1.25,.018) .. controls (-1,.4) and (-.5,.4) .. (-.25,.018);

\draw [white, fill=white](-.53,.24) circle (3pt);
\draw [white, fill=white](-.42,.4) circle (3pt);
\draw [white, fill=white](-.3,.55) circle (3pt);

\draw [fill=black](-.35,.58) circle (.2pt);
\draw [fill=black](-.31,.53) circle (.2pt);
\draw [fill=black](-.28,.48) circle (.2pt);

\draw [fill=black](-.45,.47) circle (.2pt);
\draw [fill=black](-.40,.42) circle (.2pt);
\draw [fill=black](-.36,.37) circle (.2pt);

\draw [fill=black](-.58,.275) circle (.2pt);
\draw [fill=black](-.52,.255) circle (.2pt);
\draw [fill=black](-.47,.22) circle (.2pt);

\draw [fill=black](-.75,0.3) circle (.6pt);
\draw [fill=black](-.65,0.555) circle (.6pt);
\draw [fill=black](-.55,0.722) circle (.6pt);

\draw[black] (-1.15,.75)node {$\I_l$};

\draw[black] (-1.3,-.25)node {$\P_l$};

\draw (-.55,0.722) .. controls (.25,1.05) and (.5,.2) .. (-.25,.018);
\draw [white, fill=white](.1,.2) circle (3pt);
\draw [fill=black](0.06,.17) circle (.2pt);
\draw [fill=black](0.1,.21) circle (.2pt);
\draw [fill=black](0.13,.25) circle (.2pt);
\draw [fill=black](-.05,0.09) circle (.6pt);
\draw [fill=black](-.05,0.77) circle (.6pt);
\draw [fill=black](.19,0.37) circle (.6pt);

\draw[black] (0.35,.5)node {$\P_r$};

\draw (-.75,-.4) node {\footnotesize{(c)}};

\draw [white] (.5,-.3) circle (.5pt);
\draw [white] (-2,-.3) circle (.5pt);
\draw [white] (.5,.85) circle (.5pt);
\draw [white] (-2,.85) circle (.5pt);

\end{tikzpicture}
\end{center}
\vspace{-.5cm}
\caption{}
\label{fig: In the lema}
\end{figure}
If $\P_r$ is pendant edge we may take $i=r$ for $G$.
The same applies if $\P_r$ is a pending cycle. Assume then that 
$\P_r$ has distinct end-vertices. If $\P_i$ is a pendant edge of $G^\flat$ and it ceases
to be so in $G$, then the end-vertices of $\P_r$ must coincide with the end-vertices of 
$\P_i$ and the only $\P_k$ that contain these vertices are then $\P_r$ and $\P_i$ which are both ears
of $G$. (See Figure~\ref{fig: In the lema}a.)
In this case, $\P_r$ satisfies the conditions for $G$. 
If $\P_i$ is a pending cycle of $G^\flat$ which ceases to be an ear of $G$ then one 
of the end-vertices of $\P_r$ must 
be an inner vertex of $\P_i$. Then, arguing as before, we see that $\P_r$ satisfies the conditions. 
(See Figure~\ref{fig: In the lema}b.)
Finally, assume that $\P_i$ is an ear of $G^\flat$ with distinct end-vertices. 
Let $\I_1,\dots,\I_{r-1}$ be the subpaths induced by the end-vertices of $\P_i$ in the paths $\P_1,\dots,\P_{r-1}$.
For ease of notation consider $\I_j$ equal to the empty set if $\P_j$ does not contain both end-vertices of $\P_i$. 
If the end-vertices of $\P_r$ do not coincide with any 
inner vertex of the paths $\I_i,\dots,\I_{r-1}$ then $\P_i$ satisfies the conditions of the statement for $G$. 
Assume that an end-vertex of $\P_r$ is an inner vertex of $\I_l$. 
Then, as $\I_l$ is an ear of $G^\flat$, $\P_r$ has to be nested in $\P_l$. Since 
$\P_i$ is also nested in $\P_l$ the nest intervals must be nested. This means that the subpath induced by 
the end-vertices of $\P_r$ in $\P_l$ must be contained in $\I_l$. (See Figure~\ref{fig: In the lema}c.) 
Then, $\P_r$ satisfies the conditions of the statement for $G$ as the only paths 
that contain the end-vertices of $\P_r$ are then $\P_r$ and $\P_l$.
\end{proof}

\begin{theorem}\label{thm: regularity of a nested ear decomposition}
Let $\P_0,\P_1, \dots,\P_r$ be a weak nested ear decomposition of a bipartite graph, $G$. 
Let $\epsilon$ denote the number of even ears and pendant edges of the decomposition. Then, 
\begin{equation}\label{eq:1005}
\textstyle \reg G = \frac{|V_G|+\epsilon-3}{2} (q-2).
\end{equation}
\end{theorem}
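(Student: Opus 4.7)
The plan is to argue by induction on the number $r$ of ears in the weak nested ear decomposition $\P_0,\P_1,\dots,\P_r$. The base case $r=1$ reduces to $G$ being either a single edge (when $\P_1$ is a pendant edge) or an even cycle (when $\P_1$ is a pending cycle, which must be of even length because $G$ is bipartite); in both cases (\ref{eq:1005}) follows immediately from Table~\ref{table: values of regularity}.

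For the inductive step, I would apply Lemma~\ref{lemma: existence of an ear with a nest interval ear} to select an index $i>0$ for which $\P_i$ falls into one of three configurations: a pendant edge of $G$, a pending cycle of $G$, or an open ear of $G$ whose nest interval $\I$ in any containing $\P_k$ is also an ear of $G$. In each configuration the inner vertices of $\P_i$ have degree $2$ in $G$, so removing $\P_i^\circ$ (or the edge $\P_i$ itself, when $\ell(\P_i)=1$) yields a bipartite graph $G^\flat$ still equipped with a weak nested ear decomposition of $r-1$ ears, to which the inductive hypothesis applies. A short parity-split arithmetic then reduces (\ref{eq:1005}) for $G$ to the single identity $\reg G=\reg G^\flat+\bigl\lfloor\frac{\ell(\P_i)}{2}\bigr\rfloor(q-2)$, interpreted as $\reg G=\reg G^\flat+(q-2)$ in the pendant-edge case, since a pendant edge also contributes $+1$ to $\epsilon$.

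When $\P_i$ is a pendant edge, this identity is Proposition~\ref{prop: additivity of reg on leaves}. When $\P_i$ is a pending cycle --- necessarily of even length by bipartiteness --- it is Proposition~\ref{prop: adding a path to the endpoints of an edge}(2). For the remaining configuration, $\I$ is still an open ear of $G^\flat$ because its inner vertices retain degree $2$ after $\P_i^\circ$ is deleted, and the natural strategy is to invoke Theorem~\ref{thm: adding a path to the endpoints of path with degree 2 inner vertices} with $\P=\P_i$.

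The hard part will be discharging the hypothesis of Theorem~\ref{thm: adding a path to the endpoints of path with degree 2 inner vertices}, namely that $G^\flat$ satisfies the bipartite ear modification hypothesis on $\I$. For any bipartite ear modification $(G^\flat)'$ of $G^\flat$ along $\I$, I plan to build a weak nested ear decomposition of $(G^\flat)'$ directly from that of $G^\flat$, with strictly fewer than $r$ ears so that the inductive hypothesis applies: under modification~(i), the replacement ear is substituted for $\I$ inside the containing path $\P_k$, a substitution that preserves parity and hence $\epsilon$; under modification~(ii), the identification of the end-vertices of $\I$ (available only when $\ell(\I)$ is even) is pushed through the decomposition, collapsing $\P_k$ and possibly splitting some earlier $\P_a$ that passes through both end-vertices into a shorter path together with a pending cycle --- a splitting which, because those end-vertices lie in the same color class of the bipartition, preserves parity in precisely the way needed to keep $\epsilon$ compatible with (\ref{eq:1005}). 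The most delicate sub-case is the degenerate one in which $\I$ coincides with the full path $\P_k$, requiring separate bookkeeping on later ears whose end-vertices get identified. After this case analysis, applying the inductive formula to $(G^\flat)'$ delivers $\reg(G^\flat)'=\reg G^\flat+\frac{|V_{(G^\flat)'}|-|V_{G^\flat}|}{2}(q-2)$, which is exactly what the bipartite ear modification hypothesis demands.
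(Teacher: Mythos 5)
Your proposal follows essentially the same route as the paper's proof: induction on the number of ears, Lemma~\ref{lemma: existence of an ear with a nest interval ear} to isolate a suitable $\P_i$, Propositions~\ref{prop: additivity of reg on leaves} and~\ref{prop: adding a path to the endpoints of an edge} for the pendant-edge and pending-cycle cases, and Theorem~\ref{thm: adding a path to the endpoints of path with degree 2 inner vertices} for the remaining case, with the bipartite ear modification hypothesis discharged exactly as you describe, by inducing a weak nested ear decomposition on the modified graph and invoking the inductive formula. One correction to your sketch of modification~(ii): taking $j$ \emph{minimal} with $\P_i$ nested in $\P_j$ forces an end-vertex of $\I$ into $\P_j^\circ$, so no earlier path contains both end-vertices, and any later $\P_k$ containing both must have them as its own end-vertices and therefore collapses wholesale to a pending cycle (or pendant edge) rather than being split into a shorter path plus a cycle; the genuine bookkeeping is instead the removal of duplicated edges created by the identification and the verification that $\epsilon$ and the nesting condition survive, which is where the paper spends most of its effort.
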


\begin{proof}
We will argue by induction on $r\geq 1$. If $r=1$ then $G$ is either an even cycle 
or a single edge. In both cases $\epsilon =1$ and \eqref{eq:1005} gives 
$\reg G = \frac{|V_G|-2}{2}(q-2)$, in the case of the even cycle 
and $\reg G = 0$, in the case of the edge. Both are correct. 
Assume that \eqref{eq:1005} holds for any bipartite graph endowed 
with a weak nested ear decomposition with $r$ paths and consider $G$ a graph endowed with 
a weak nested ear decomposition $\P_0,\dots, \P_{r+1}$ with $r+1$ paths. Throughout the remainder of the proof, denote by $\epsilon$ the number of 
even ears and pendant edges of this decomposition.
By Lemma~\ref{lemma: existence of an ear with a nest interval ear},
there exist $i>0$, such that $\P_i$ is either a pendant edge, or a pendant cycle, or an ear of $G$
with distinct end-vertices 
such that for any $\P_k$ containing both end-vertices of $\P_i$, 
the subpath of $\P_k$ induced  by them is an ear of $G$. 
It is clear that in any of the cases 
\begin{equation}\label{eq: L2249}
G^\flat=\P_0\cup \cdots \cup \P_{i-1}\cup \P_{i+1}\cup \cdots \cup \P_{r+1}
\end{equation} 
is a bipartite graph endowed with a weak nested 
ear decomposition. If $\P_{i}$ is a pendant edge of $G$, then by Proposition~\ref{prop: additivity of reg on leaves} and induction,
$$
\ts \reg G = \reg G^\flat + (q-2) = \frac{|V_G|-1+(\epsilon -1)-3}{2}(q-2) + (q-2) = \frac{|V_G|+\epsilon-3}{2} (q-2).
$$
If $\P_{i}$ is pendant cycle of $G$, then 
$\ell(P_{i})$ is even and, 
by induction and Proposition~\ref{prop: adding a path to the endpoints of an edge},
$$
\ts \reg G =  \frac{|V_G|-\ell(\P_{i})+1+(\epsilon -1)-3}{2}(q-2) + \frac{\ell(\P_{i})}{2}(q-2) = \frac{|V_G|+\epsilon-3}{2} (q-2).
$$
Assume that $\P_{i}$ has distinct end-vertices and that for any $\P_k$ containing both end-vertices of $\P_i$, 
the subpath of $\P_k$ induced  by them is an ear of $G$. Denote the end-vertices
of $\P_i$ by $v$ and $w$. If $v$ and $w$ are adjacent in $G$ then 
$\ell(\P_i)$ must be odd. Accordingly, by induction and Proposition~\ref{prop: adding a path to the endpoints of an edge},
$$
\ts \reg G = \frac{|V_G|-\ell(\P_{i})+1+\epsilon-3}{2}(q-2) + \frac{\ell(\P_{i})-1}{2}(q-2) = \frac{|V_G|+\epsilon-3}{2} (q-2).
$$
Assume now that $v$ and $w$ are not adjacent in $G$ and let $j>0$ be the least positive integer 
such that $\P_i$ is nested in $\P_j$. By the minimality of $j$ one of $v$ or $w$, say $v$, belongs to $\P_j^\circ$.
Denote the nest interval of $\P_i$ in $\P_j$ by $\I$. (See top of Figure~\ref{fig: Fixing minimal P_j}.)
\begin{figure}[ht]
\begin{center}
\begin{tikzpicture}[line cap=round,line join=round, scale=2.5]

\draw[black, fill=black] (-1.7,-.055) circle (0.2pt);
\draw[black, fill=black] (-1.65,-.045) circle (0.2pt);
\draw[black, fill=black] (-1.6,-.035) circle (0.2pt);

\draw[black, fill=black] (.1,-.035) circle (0.2pt);
\draw[black, fill=black] (0.15,-.045) circle (0.2pt);
\draw[black, fill=black] (0.2,-.055) circle (0.2pt);

\draw [fill=black](-1.5,0) circle (.6pt);
\draw [fill=black](0,0) circle (.6pt);

\draw (-1.55,-.02) .. controls (-1.25,.08) and (-.25,.08) .. (.05,-.02);
\draw [white, fill=white](-.75,.06) circle (4pt);
\draw [fill=black](-.83,.06) circle (.2pt);
\draw [fill=black](-.75,.06) circle (.2pt);
\draw [fill=black](-.67,.06) circle (.2pt);
\draw [fill=black](-1.25,.028) circle (.6pt);
\draw [fill=black](-1,.048) circle (.6pt);
\draw [fill=black](-.5,.048) circle (.6pt);
\draw [fill=black](-.25,.028) circle (.6pt);

\draw (-1.25,.028) .. controls (-1.5,1) and (0,1) .. (-.25,.028);
\draw [white, fill=white](-.75,.8) circle (4pt);
\draw [fill=black](-.83,.75) circle (.2pt);
\draw [fill=black](-.75,.76) circle (.2pt);
\draw [fill=black](-.67,.75) circle (.2pt);
\draw [fill=black](-1.05,.68) circle (.6pt);
\draw [fill=black](-.45,.68) circle (.6pt);

\draw[black] (-.2,.63)node {$\P_k$};

\draw (-1.25,.028) .. controls (-1.25,.75) and (-.25,.75) .. (-.25,.028);
\draw [fill=black](-1.15,.38) circle (.6pt);
\draw [fill=black](-.35,.38) circle (.6pt);
\draw [fill=black](-.75,.57) circle (.6pt);

\draw (-1.25,.028) .. controls (-1.15,.5) and (-.35,.5) .. (-.25,.028);
\draw [fill=black](-.75,.38) circle (.6pt);

\draw [gray](-1.25,.028) .. controls (-1.25,-.5) and (-.25,-.5) .. (-.25,.028);
\draw [white, fill=white](-.75,-.4) circle (4pt);
\draw [fill=gray,gray](-.83,-.36) circle (.2pt);
\draw [fill=gray,gray](-.75,-.37) circle (.2pt);
\draw [fill=gray,gray](-.67,-.36) circle (.2pt);
\draw [fill=gray,gray](-1.1,-.26) circle (.6pt);
\draw [fill=gray,gray](-.4,-.26) circle (.6pt);

\draw[gray] (-.6,-.23)node {$\P_i$};

\draw [fill=black](-1.25,.028) circle (.6pt);
\draw [fill=black](-.25,.028) circle (.6pt);

\draw (-1.36,.1) node {$\ss v$};
\draw (-.12,.1) node {$\ss w$};

\draw (-1.6,.1)node {$\P_j$};
\draw (-.75,.16)node {$\I$};

\end{tikzpicture}

\begin{tikzpicture}[line cap=round,line join=round, scale=1.5]
\BDarrow{0}
\end{tikzpicture}

\vspace{-1.5cm}
\begin{tikzpicture}[line cap=round,line join=round, scale=2.5]

\draw[black, fill=black] (.2-1.7,-.02) circle (0.2pt);
\draw[black, fill=black] (.2-1.65,-.01) circle (0.2pt);
\draw[black, fill=black] (.2-1.6,0) circle (0.2pt);

\draw[black, fill=black] (.1-.2,0) circle (0.2pt);
\draw[black, fill=black] (0.15-.2,-.01) circle (0.2pt);
\draw[black, fill=black] (.2-.2,-.02) circle (0.2pt);

\draw (-1.35,0.01) .. controls (-1.25,.03) and (-.25,.03) .. (-.15,0.01);

\draw (-.75,.028) .. controls (-0.5,1.5) and (.75,.5) .. (-.75,.028);
\draw [white, fill=white](-.1,.75) circle (4pt);
\draw [fill=black](-.17,.8) circle (.2pt);
\draw [fill=black](-.1,.76) circle (.2pt);
\draw [fill=black](-.05,.7) circle (.2pt);
\draw [fill=black](-.22,.29) circle (.6pt);
\draw [fill=black](-.57,.55) circle (.6pt);

\draw[white] (-2.22,.5) circle (1pt);

\draw (-.75,.028) .. controls (-2,.25) and (-1.25,.75) .. (-.75,.028);
\draw [fill=black](-1.07,.33) circle (.6pt);
\draw [fill=black](-1.15,.13) circle (.6pt);
\draw [fill=black](-1.42,.37) circle (.6pt);

\draw[black] (.15,.63)node {$\P'_k$};

\draw (-.75,.028) -- (-.85,.5);
\draw [fill=black](-.85,.5) circle (.6pt);

\draw [fill=black](-1.25,.018) circle (.6pt);
\draw [fill=black](-.25,.018) circle (.6pt);
\draw [fill=black](-.75,.028) circle (.6pt);

\draw (-.75,-.1) node {$\ss v=w$};

\draw (-1.6,.1)node {$\P'_j$};

\end{tikzpicture}
\end{center}
\caption{}
\label{fig: Fixing minimal P_j}
\end{figure}

\noindent
To be able to use Theorem~\ref{thm: adding a path to the endpoints of path with degree 2 inner vertices}, 
it will now suffice to show that $G^\flat$ satisfies the bipartite ear modification hypothesis on $\I$.
If $G'$ is a bipartite ear modification of $G^\flat$ along $\I$, which does not involve the identification of the end-vertices 
of $\I$, then, as $v$ and $w$ are not adjacent, no multiple edges arise and 
the weak nested ear decomposition of $G^\flat$ induces a weak nested ear decomposition of $G'$ in which the only 
change is in the length of $\P_j$, which, nevertheless, remains of the same parity. By induction, we can use 
\eqref{eq:1005} on both $G^\flat$ and $G'$ and the only change will be on the cardinality of the sets of
vertices of these graphs. It follows that 
$$
\ts \reg G' = \reg G^\flat + \frac{|V_{G^\flat}|-|V_{G'}|}{2}(q-2),
$$
which is condition \eqref{eq: the hypothesis} of 
the bipartite ear modification hypothesis. 
\smallskip

\noindent
Suppose now that $\ell(\I)$ is even and that $G'$ is obtained by identifying
the end-vertices of $\I$ in $G^\flat - \I^\circ$ and removing the multiple edges created. 
For $k\not =i$, let 
$\P'_k\subset G'$ denote the graph obtained by identifying $v$ with 
$w$ in $\P_k-\I^\circ$ and removing the multiple edges created. (See Figure~\ref{fig: Fixing minimal P_j}.) 
We note that since $\I$ is an ear of $G$, for 
$k\not = j$, the graph $\P'_k$ is obtained by simply identifying $v$ and $w$ in 
$\P_k$ and removing all multiple edges created. $\P_k$ is isomorphic to $\P'_k$ if one of $v$ or $w$
does not belong to $\P_k$. If both $v$ and $w$ belong to $\P_k$ then, by the minimality of $j$, we 
must have $j<k$. But then none of $v$ or $w$ can be an inner vertex of $\P_k$. Accordingly, they must
coincide with the end-vertices of $\P_k$. (See top of Figure~\ref{fig: Fixing minimal P_j}.)
Then, $\I\cup \P_k$ is a cycle of $G$ and, since it must be of even length, we deduce that 
$\ell(\P_k)$ is also even. In this situation $\P'_k$ is either a pending (even) cycle of $G'$,
if $\ell(\P_k)>2$, or a pending edge, if $\ell(\P_k)=2$. (See the bottom of Figure~\ref{fig: Fixing minimal P_j}.)
As for $\P'_j$, it may be a single edge if $\ell(\P_j)-\ell(\I)=1$ or if $\ell(\P_j)-\ell(\I)=2$ and $\P_j$ 
has coinciding end-vertices.
\smallskip

\noindent
It is clear that
\begin{equation}\label{eq: L2311}
G' = \P_0\cup \P'_1 \cup \cdots\cup \P'_{i-1} \cup \P'_{i+1} \cup\cdots \cup \P'_{r+1}, 
\end{equation}
as any edge in $G'$ comes from an edge in some path $\P_k$. If
\eqref{eq: L2311} does not induce a partition of the edge set of $G'$, 
there exist a vertex $u$ and two edges $\set{u,v}$, $\set{u,w}$ belonging to different paths, which become 
the same edges after the identification of $v$ with $w$.
Consider the least $k$ for which the path $\P_k$ contains both vertices $u,v$ and the least $l$ for which 
$\P_l$ contains $u,w$. We claim that $\P_j=\P_k$, and, consequently, $u$ must belong to $\P_j$.
\smallskip

\noindent
Assume, to the contrary, that $j\not = k$. Since $v\in \P_j^\circ$ we have $j<k$ and  
then $v$ is an end-vertex of $\P_k$. If $u$ in an inner vertex of $\P_k$ then we must have $k\leq l$. 
If $u$ is an end-vertex of $\P_k$ then, by the minimality of $k$, $\P_k$
has to be a pending edge and we get the same conclusion, $k\leq l$. Assume that $k=l$. Then $j<k=l$ implies that
$\P_k=\P_l$ is a path with end-vertices $v$ and $w$, containing $u$ as an interior point. 
However if $\set{u,v}$ and $\set{u,w}$ belong to different paths then 
the degree of $u$ is not two, which contradicts the assumption on $v$ and $w$, stating that 
these vertices induce on any path of the weak decomposition a subpath the inner vertices of which have degree 
two. Hence we must have $k<l$. Then $j<k<l$ implies that $\P_l$ is the edge $\set{u,w}$. 
Since both its end-vertices belong to earlier paths, 
this contradicts the minimality of $l$. 
\smallskip

\noindent
We have proved that $\P_j=\P_k$ and, in particular, that $u\in \P_j$. Resetting notation,   
Let now $\P_k$ and $\P_l$ be \emph{any} two (distinct) paths containing the edges
$\set{u,v}$ and $\set{u,w}$, respectively. We claim that
either $\P_k$ is a non-pending odd path of the
weak ear decomposition of $G^\flat$ and $\P'_k$ is an edge or
$\P_l$ is a non-pending odd path of the
weak ear decomposition of $G^\flat$ and $\P'_l$ is an edge.
\smallskip 

\noindent
To see this, we start by noting that, since $v\in\P_j^\circ$, we have $j\leq k$. If $j<k$, then as $u$ and $v$ belong to 
$\P_j$ they must be end-vertices of $\P_k$. In this case, $\P_k=\set{u,v}$, which is a non-pending odd path
and $\P'_k$ an edge. If $j=k$ then there are two sub-cases.
Either $j=k<l$ and then $\P_l=\set{u,w}$, which is a non-pending odd ear with $\P'_l$ an edge  
(see Figure~\ref{fig: the paths}a), or we have $l<j=k$ and then $u$ and $w$ must be the end-vertices of $\P_j=\P_k$. This implies 
that $\ell(P_j)=\ell(\I)+1$, which is a odd integer, and that $\P_j'$ is an edge. (See Figure~\ref{fig: the paths}b.)

\begin{figure}[ht]
\begin{center}
\begin{tikzpicture}[line cap=round,line join=round, scale=2.5]

\draw (-0.6,0) .. controls (-0.25,.08) and (.25,.08) .. (.6,0);

\draw [fill=black](-.63,-.01) circle (.2pt);
\draw [fill=black](-.66,-.02) circle (.2pt);
\draw [fill=black](-.69,-.03) circle (.2pt);
\draw [fill=black](.63,-.01) circle (.2pt);
\draw [fill=black](.66,-.02) circle (.2pt);
\draw [fill=black](.69,-.03) circle (.2pt);

\draw [fill=black](-.50,.022) circle (.6pt);
\draw [fill=black](-.30,.046) circle (.6pt);
\draw [fill=black](-.10,.058) circle (.6pt);
\draw [fill=black](.50,.022) circle (.6pt);

\draw [white, fill=white](.2,.06) circle (3pt);
\draw [fill=black](.15,.056) circle (.2pt);
\draw [fill=black](.20,.055) circle (.2pt);
\draw [fill=black](.25,.052) circle (.2pt);

\draw (-0.5,0) .. controls (-0.4,.4) and (.4,.4) .. (.5,0);

\draw (-.5,-.07) node {$\ss u$};
\draw (-0.3,-.05) node {$\ss v$};
\draw (.5,-.07) node {$\ss w$};

\draw (1,-.1) node {$\P_j=\P_k$};
\draw (.4,.3)node {$\P_l$};

\draw (0,-.3) node {\footnotesize{(a)}};


\draw[white](-.8,-.2) circle (.1pt);
\draw[white](1.25,-.2) circle (.1pt);

\end{tikzpicture}
\begin{tikzpicture}[line cap=round,line join=round, scale=2.5]

\draw (-0.6,0) .. controls (-0.25,.08) and (.25,.08) .. (.6,0);

\draw [fill=black](-.63,-.01) circle (.2pt);
\draw [fill=black](-.66,-.02) circle (.2pt);
\draw [fill=black](-.69,-.03) circle (.2pt);
\draw [fill=black](.63,-.01) circle (.2pt);
\draw [fill=black](.66,-.02) circle (.2pt);
\draw [fill=black](.69,-.03) circle (.2pt);

\draw [fill=black](-.30,.046) circle (.6pt);
\draw [fill=black](.30,.046) circle (.6pt);

\draw (-0.3,.046) .. controls (-0.6,.7) and (.6,.7) .. (.3,.046);
\draw [fill=black](-.345,.3) circle (.6pt);
\draw [fill=black](.345,.3) circle (.6pt);
\draw [fill=black](-.15,.51) circle (.6pt);
\draw [white, fill=white](.15,.5) circle (2.5pt);
\draw [fill=black](.2,.485) circle (.2pt);
\draw [fill=black](.155,.505) circle (.2pt);
\draw [fill=black](.11,.52) circle (.2pt);

\draw (-.43,.3) node {$\ss v$};
\draw (-.3,-.05) node {$\ss u$};
\draw (.3,-.05) node {$\ss w$};

\draw  (-.55,.5) node {$\P_k=\P_j$};
\draw (-.8,-.1)node {$\P_l$};

\draw (0,-.3) node {\footnotesize{(b)}};

\draw[white](-1.05,-.2) circle (.1pt);
\draw[white](1.0,-.2) circle (.1pt);

\end{tikzpicture}
\end{center}
\vspace{-.35cm}
\caption{}
\label{fig: the paths}
\end{figure}

\noindent
We conclude that, for \eqref{eq: L2311} to induce a partition of the edge set of $G'$ it suffices 
to remove appropriately from \eqref{eq: L2311} the paths that consist of a single repeated edge coming
from paths as described above. 
We note that, in this situation, the number of even ears and pending edges, after removing
all repeated edges in $\eqref{eq: L2311}$, coincides with the number of even ears and pending edges 
of the weak nested decomposition of $G^\flat$ given by \eqref{eq: L2249}.
\smallskip

\noindent
Let us now prove that, after the exclusion from \eqref{eq: L2311} of the repeated edges, 
we obtain a weak nested ear decomposition of $G'$. 
Since $\I$ is an ear of $G$, it is clear that the end-vertices of $\P'_k$
belong to $\P'_l$, for some $l<k$.  
An inner vertex of $\P'_k$ always comes from an inner vertex of $\P_k$. 
If, following the bipartite ear modification, such vertex is identified with a vertex of a previous $\P'_l$, 
then the two vertices in question must $v$ and $w$. We deduce that $\P_k = \P_j$.
If $w$ is also an inner vertex of $\P_j$ then $v=w$ in $\P'_j$ cannot belong to any earlier $\P'_l$. If $w$ is an end-vertex of $\P_j$ then 
$v=w$ becomes an end-vertex of $\P'_j$.
As for the nesting condition,  let $\P'_k$ and $\P'_{l}$ be two paths nested in $\P'_s$, with $s<k,l$. 
We may assume that none of $\P'_k$ or $\P'_{l}$ is a pending edge 
or cycle for otherwise there will be nothing to show. Let $v_1\not = v_2$ be the end-vertices of $\P'_k$ and 
$w_1\not =w_2$ those of $\P'_{l}$. If the nest intervals of $\P'_k$ and $\P'_{l}$ in $\P'_s$ have inner 
vertices in common and are not nested, then, in the ordering of vertices of $\P'_s$, 
we must have, without loss of generality, 
$v_1<w_1<v_2<w_2$. This is impossible before the identification of $v$ with $w$, since we are starting from  
a weak nested ear decomposition of $G^\flat$. 
Hence both $v$ and $w$ belong to $\P_s$ and
one of $w_1$ or $v_2$ must be the vertex obtained by identifying
$v$ with $w$. Assume, without loss of generality, that this vertex is $w_1$. 
Then $v_1,v_2$ come from $\P_s$ unchanged and one of them is an inner 
vertex of the subpath induced by $v$ and $w$ in $\P_s$, but this is impossible since,
by assumption, $v$ and $w$ induce on $\P_s$ a subpath whose inner vertices have degree 
two in $G$.
\smallskip

\noindent
Having established that, after removing all redundant edges, \eqref{eq: L2311} 
induces a weak nested ear decomposition of $G'$ we can now use induction to compute
its regularity. Since the fact that $\ell(\I)$ is even implies that $\ell(\P_i)$ is even, we deduce 
that the number of even ears and pending edges of the weak decomposition of $G^\flat$ given 
in \eqref{eq: L2249} is $\epsilon -1$. Accordingly, 
$$
\ts \reg G' = \frac{|V_{G'}|+(\epsilon -1)-3}{2}(q-2) = 
\reg G^\flat + \frac{|V_{G'}|-|V_{G^\flat}|}{2}(q-2),
$$
and therefore, $G^\flat$ satisfies the bipartite ear modification assumption along $\I$.
This finishes the proof of the theorem.
\end{proof}

The number of even length paths in an ear decomposition of a graph is not necessarily constant, even if we  
restrict to bipartite graphs. Take, for example, the graph obtained from the graph in Figure~\ref{fig: contraexample to adding path}
by adding the edge $\set{2,8}$. As a first ear decomposition, consider the one obtained by starting from vertex $1$ and adding
consecutively the paths $(1,2,3,7,8,5,1)$, $(1,4,3)$, $(5,6,7)$ and $(2,8)$. This 
decomposition has three even length ears. Alternatively, consider the ear decomposition starting
from vertex $1$, using first the Hamiltonian cycle: $(1,2,8,5,6,7,3,4,1)$, followed by $(1,5)$, $(7,8)$ and $(2,3)$. 
This decomposition has only one even length ear.
\medskip

The following purely combinatorial result follows easily from Theorem~\ref{thm: regularity of a nested ear decomposition}.

\begin{cor}\label{cor: number of even length paths is constant}
Let $G$ be a bipartite graph endowed with a weak nested ear decomposition. Then the number of even ear and pendant 
edges of any weak nested ear decomposition of $G$ remains constant.
\end{cor}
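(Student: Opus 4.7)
The plan is to extract the number $\epsilon$ from the formula of Theorem~\ref{thm: regularity of a nested ear decomposition} and observe that only graph invariants appear on the right-hand side. Concretely, given any weak nested ear decomposition $\P_0,\P_1,\dots,\P_r$ of $G$ with $\epsilon$ even ears and pendant edges, Theorem~\ref{thm: regularity of a nested ear decomposition} asserts
$$
\ts \reg G = \frac{|V_G|+\epsilon-3}{2}(q-2).
$$
Solving for $\epsilon$ gives
$$
\ts \epsilon = \frac{2 \reg G}{q-2} + 3 - |V_G|.
$$

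The first step is to notice that the right-hand side depends only on $G$ and on the field $K$ (through $q$), and not on the choice of weak nested ear decomposition. Since $\reg G$, $|V_G|$ and $q$ are fixed once $G$ and $K$ are fixed, this expression is a well-defined integer attached to $G$ itself. Consequently, if $\P_0,\dots,\P_r$ and $\P'_0,\dots,\P'_{r'}$ are two weak nested ear decompositions of $G$, both with $\epsilon$ and $\epsilon'$ even ears and pendant edges respectively, applying the formula to each yields $\epsilon = \epsilon'$.

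There is essentially no obstacle: the entire content is already encoded in Theorem~\ref{thm: regularity of a nested ear decomposition}. One small point worth flagging in the write-up is that $q-2 \neq 0$ (we assume $q>2$ throughout by the convention set in Section~\ref{sec: prelim}), so division by $q-2$ is legitimate and $\epsilon$ is uniquely determined. The corollary then follows immediately.
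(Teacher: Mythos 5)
Your proposal is correct and is exactly the argument the paper intends: the corollary is stated as following "easily" from Theorem~\ref{thm: regularity of a nested ear decomposition}, and solving the regularity formula for $\epsilon$ (using $q>2$ so that $q-2\neq 0$) shows $\epsilon$ is determined by $\reg G$, $|V_G|$ and $q$, which are independent of the chosen decomposition. Nothing is missing.
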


We will finish by giving another application of Theorem~\ref{thm: regularity of a nested ear decomposition}.
As mentioned in Section~\ref{sec: prelim}, it follows from Proposition~\ref{prop: independent sets and regularity} that 
$$
\reg G \geq (\alpha(G)-1)(q-2),
$$  
where $\alpha(G)$ denotes the independence number of $G$. This bound is not sharp if $G$ is not bipartite or $2$-connected, but 
equality does hold if $G$ is an even cycle, or a complete bipartite graph, or a bipartite
parallel composition of paths (see \cite{MaNeVPVi}), among many other examples. 
This could suggest that for a bipartite $2$-connected 
graph the Castelnuovo--Mumford regularity of a graph is closely related with $\alpha(G)$. 
In the following example we want to show that this is not the case.

\begin{exam}\label{exam: taino sun}
Fix an even positive integer $k$. Consider the graph $G$, in Figure~\ref{fig: Taino Sun}, below, obtained from a cycle of length $3k$, 
by attaching $k$ ears of length two at the pairs of vertices $3i-2$ and $3i$, for each $i=1,\dots,k$.
\begin{figure}[ht]
\begin{center}
\begin{tikzpicture}[line cap=round,line join=round, scale=1.75]

\draw (0,0) circle (28.5pt);

\foreach \k in {0,1,2,3,4,5,6,7,8,9,10,11}
{
\draw[fill=black] ({cos(\k * 30)},{sin(\k * 30)}) circle (.8pt);
}

\draw ({cos(4* 30)},{sin(4 * 30)}) .. controls ({1.5*cos(3.5* 30)},{1.5*sin(3.5 * 30)}) and ({1.5*cos(2.5* 30)},{1.5*sin(2.5 * 30)}) .. ({cos(2* 30)},{sin(2 * 30)});
\draw[fill=black] ({1.3*cos(3 * 30)},{1.3*sin(3 * 30)}) circle (.8pt);

\draw ({cos(-4* 30)},{sin(-4 * 30)}) .. controls ({1.5*cos(-3.5* 30)},{1.5*sin(-3.5 * 30)}) and ({1.5*cos(-2.5* 30)},{1.5*sin(-2.5 * 30)}) .. ({cos(-2* 30)},{sin(-2 * 30)});
\draw[fill=black] ({1.3*cos(0 * 30)},{1.3*sin(0 * 30)}) circle (.8pt);

\draw ({cos(-30)},{sin(-30)}) .. controls ({1.5*cos(-.5* 30)},{1.5*sin(-.5 * 30)}) and ({1.5*cos(.5* 30)},{1.5*sin(.5 * 30)}) .. ({cos(30)},{sin(30)});
\draw[fill=black] ({1.3*cos(-3 * 30)},{1.3*sin(-3 * 30)}) circle (.8pt);

\draw ({cos(5* 30)},{sin(5 * 30)}) .. controls ({1.5*cos(5.5* 30)},{1.5*sin(5.5 * 30)}) and ({1.5*cos(6.5* 30)},{1.5*sin(6.5 * 30)}) .. ({cos(7* 30)},{sin(7 * 30)});
\draw[fill=black] ({1.3*cos(6 * 30)},{1.3*sin(6 * 30)}) circle (.8pt);

\draw [white, fill=white](0,-1) circle (4pt);
\draw [fill=black](0,-1.008) circle (.23pt);
\draw [fill=black](.07,-1) circle (.23pt);
\draw [fill=black](-.07,-1) circle (.23pt);

\draw [white, fill=white](0,-1.27) circle (4pt);
\draw [fill=black](0,-1.308) circle (.23pt);
\draw [fill=black](.07,-1.3) circle (.23pt);
\draw [fill=black](-.07,-1.3) circle (.23pt);

\draw ({.85*cos(0 * 30)},{.85*sin(0 * 30)}) node {$\ss 5$};
\draw ({.85*cos(1 * 30)},{.85*sin(1 * 30)}) node {$\ss 4$};
\draw ({.85*cos(2 * 30)},{.85*sin(2 * 30)}) node {$\ss 3$};
\draw ({.85*cos(3 * 30)},{.85*sin(3 * 30)}) node {$\ss 2$};
\draw ({.85*cos(4 * 30)},{.85*sin(4 * 30)}) node {$\ss 1$};
\draw ({.83*cos(5 * 30)},{.83*sin(5 * 30)}) node {$\ss 3k$};
\draw ({.72*cos(6 * 30)},{.72*sin(6 * 30)}) node {$\ss 3k-1$};
\draw ({.72*cos(7 * 30)},{.85*sin(7 * 30)}) node {$\ss 3k-2$};

\draw ({.85*cos(11 * 30)},{.85*sin(11 * 30)}) node {$\ss 6$};

\draw (0,1.45) node {$\ss 3k+1$};
\draw (1.6,0) node {$\ss 3k+2$};
\draw (-1.5,0) node {$\ss 4k$};


\draw[white](-1.85,-.2) circle (.1pt);
\draw[white](1.85,-.2) circle (.1pt);

\end{tikzpicture}
\end{center}
\vspace{-.35cm}
\caption{}
\label{fig: Taino Sun}
\end{figure}
This graph is endowed with a nested ear decomposition with $k+1$ even ears. According to 
Theorem~\ref{thm: regularity of a nested ear decomposition}, 
$$
\ts \reg G = \frac{4k+k+1-3}{2}(q-2) = (\frac {5k}{2}-1)(q-2).
$$
On the other hand, as any set of independent vertices must have at most two elements in the $k$ cycles of length $4$ created by the addition 
of the $k$ ears and the vertex sets of these cycles cover $V_G$, 
we deduce that $\alpha(G)$ is $2k$. 
In conclusion, this example shows that, indeed, 
$$
\ts \reg G - (\alpha(G)-1)(q-2) = \frac{k}{2}(q-2)
$$ 
can be arbitrarily large.
\end{exam}

\end{document}